\documentclass[11pt]{amsart}
\usepackage{amscd,amsxtra,amssymb,mathrsfs, bbm}
\usepackage{stmaryrd, leftindex}
\usepackage{amscd,amsxtra,amssymb, bbm}
\usepackage[new]{old-arrows}
\usepackage{geometry}
            %{\vspace{1ex}\begin{quotation}\begin{center}\begin{em}}
        {\begin{quotation}\begin{center}\begin{em}}
        {\par\end{em}\end{center}\end{quotation}}

\newtheorem{theorem}{Theorem}[section]
\newtheorem{corollary}[theorem]{Corollary}
\newtheorem{lemma}[theorem]{Lemma}
\newtheorem{proposition}[theorem]{Proposition}

\theoremstyle{definition}
\newtheorem{definition}[theorem]{Definition}
\newtheorem{remark}[theorem]{Remark}
\newtheorem{summary}[theorem]{Summary}
\newtheorem{example}[theorem]{Example}

\DeclareMathOperator{\rad}{\mathsf{rad}}

\DeclareMathOperator{\GL}{\mathsf{GL}}

\DeclareMathOperator{\Aut}{\mathsf{Aut}}
\DeclareMathOperator{\End}{\mathsf{End}}
\DeclareMathOperator{\Mat}{\mathsf{Mat}}

%%%%%%%%%Part3

 %derived tensor product

%%%%%%%%%%%%%%%%%%%%%%%%%%%%%%%%%%%%%%%%%%%%%%%%%%%%%%%%%%%%%%

\setlength{\oddsidemargin}{-1in} % Linker Rand 30mm vom Papierrand
\addtolength{\oddsidemargin}{30mm}
\setlength{\evensidemargin}{\oddsidemargin}
\setlength{\textwidth}{150mm}

\newcommand{\kk}{\mathbbm{k}}
\newcommand{\CC}{\mathbb{C}}
\newcommand{\RR}{\mathbb{R}}
\newcommand{\NN}{\mathbb{N}}
\newcommand{\KK}{\mathbb{K}}
\newcommand{\LL}{\mathbb{L}}
\newcommand{\ZZ}{\mathbb{Z}}

\newcommand{\rightarrowdbl}{\longrightarrow\mathrel{\mkern-14mu}\rightarrow}
\newcommand{\twoheadarrow}{\rightarrow\mathrel{\mkern-14mu}\rightarrow}

\newcommand{\FF}{\mathbb{F}}

\renewcommand{\SS}{\mathbb{S}}

\newcommand{\HH}{\mathsf{H}}

\renewcommand{\HH}{\mathbb{H}}
\newcommand{\PP}{\mathbbm{P}}

\newcommand{\OO}{\mathbb{O}}

\newcommand{\Ad}{\mathsf{Ad}}

\newcommand{\lar}{\longrightarrow}

\newcommand{\llbrace}{(\!(}
\newcommand{\rrbrace}{)\!)}
\newcommand{\llbraket}{\langle\!\langle}
\newcommand{\rrbraket}{\rangle\!\rangle}

\def\oA{\bar{A}}		\def\oH{\bar{H}}		\def\oM{\bar{M}}
\def\sb{\subseteq}

%%%%%%%%%%%%%%%%%%%%%%%% BIG SF LETTERS %%%%%%%%%%%%%%%%%%%%

\def\kron#1#2{\xymatrix@C=2em{{#1}\ar@/^3pt/[r]\ar@/_3pt/[r]&{#2}}}

\usepackage{tikz}
\usetikzlibrary{calc, arrows, positioning, shapes, fit, matrix, decorations}
\usetikzlibrary{decorations.shapes, decorations.pathreplacing}

\tikzset{
  %paint/.style={ draw=#1!50!black, fill=#1!50 },
  decorate with/.style={decorate,decoration={shape backgrounds,shape=#1,shape size=1.5mm}},
   deco/.style={decorate with=dart},
   ordi/.style={draw,-stealth,  thick},
   conj/.style={dashed, draw, thick},
   ve/.style={circle, draw, thick, fill=blue!20, inner sep=1pt, outer sep=2pt, minimum size=7pt},
    dot/.style={fill=blue!10,circle,draw, inner sep=1pt, minimum size=5pt},
  dv/.style={star,star points=5,
star point ratio=2, draw, thick, fill=green!20, inner sep=1pt,outer sep=2pt,minimum size=7pt}
}

%---------------------------------------------------------------------------------

\tikzset{
    tbl5 nodes/.style={
        rectangle,
        execute at begin node=$,
       execute at end node=$,
       fill=blue!5,
     %  draw,
       % draw=grey!40,
     %   thin, dashed,
        % minimum height=20pt,
        align=center,
        text depth=0.5ex,
        text height=2ex,
        inner xsep=0pt,
        outer sep=0pt,
           },
    tbl5/.style={
        matrix of nodes,
        row sep=-\pgflinewidth,
        column sep=-\pgflinewidth,
        nodes={
            tbl5 nodes
        },
        execute at empty cell={\node[draw=none]{};}
    }
  }

\input xy
\xyoption{all}

\title[Classification of real nodal orders]{Classification of real nodal orders}

\author{Igor Burban}
\address{
Universit\"at Paderborn\\
Institut f\"ur Mathematik \\
Warburger Stra\ss{}e 100 \\
33098 Paderborn \\
Germany
}
\email{burban@math.uni-paderborn.de}

\author{Yuriy Drozd}
\address{
Harvard University and Institute of Mathematics, National Academy of
Sciences of Ukraine}
\email{y.a.drozd@gmail.com}

\subjclass[2010]{Primary 16E60, 16G30, 14A22, 16K20}
\keywords{Hereditary and nodal orders, central simple algebras}

\begin{document}

\begin{abstract}
In this paper, we study properties of nodal orders defined over arbitrary base fields. In particular we give a classification of complete real nodal orders.
\end{abstract}

\maketitle
\section{Introduction}
Nodal orders are non-commutative generalizations of the 
$\kk$-algebra $\kk\llbracket x, y\rrbracket/(xy)$, where $\kk$ is a field.  This class of algebras was introduced by the second-named author in \cite{NodalFirst}. Assume that $\kk$ is algebraically closed. A characteristic property of nodal orders is that they are representation-tame, whereas all other orders are representation-wild \cite{NodalFirst}. In our joint work \cite{Nodal} it was proven that nodal orders are even derived representation-tame. 

This class of orders  appear in various representation theoretic setting. For example, they arise  in the description of tame blocks of the categories of Harish-Chandra modules for real reductive groups \cite{BGG, Khoroshkin}. 
 Of major  interest is a global version of nodal orders given by (tame) non-commutative projective nodal curves; see for instance  \cite{bdnpdalcurves}.
 
Nodal orders over an algebraically closed field $\kk$ were classified in 
 \cite{Voloshyn}.  Properties of nodal orders over arbitrary fields $\kk$ were elaborated one step further in \cite{DrozdZembyk,  BurbanDrozdQuotients}.
 
This work provides  a full classification of nodal orders over the field of real numbers $\RR$.  Note that there are  three non-isomorphic commutative real nodal orders: $\CC\llbracket x, y\rrbracket/(xy)$, $\RR\llbracket x, y\rrbracket/(xy)$ and $\RR\llbracket x, y\rrbracket/(x^2 + y^2)$ (whereas $\CC\llbracket x, y\rrbracket/(xy)$ is the only commutative nodal order over $\CC$). One reason for a higher complexity of the combinatorics of real nodal orders is non-vanishing of the Brauer group $\mathsf{Br}\bigl(\RR\llbrace t\rrbrace\bigr)$, which leads to a broader variety of possible rational hulls  compared to the case of an algebraically closed field.  Another reason is that in comparison to the case of $\CC$, we have more types of irreducible semi-simple nodal pairs over $\RR$. Nonetheless, it turns out that real nodal orders are parametrized by appropriate discrete data, although the corresponding combinatorial pattern turns out to be much more complicated compared to the case of algebraically closed fields.

The plan of this paper is the following.

\smallskip
\noindent
In Section \ref{S:NodalOrders} we recall the definition of nodal orders and illustrate it  by numerous  ``typical''  examples. The main result of this section is Theorem \ref{T:Cartesian} describing nodal orders as appropriate ``gluings'' of their hereditary covers and semi-simple quotients. 

\smallskip
\noindent
In Section \ref{S:Hereditary} we recall the classification of Harada \cite{Harada, Harada2} of hereditary orders over a complete discrete valuation ring. The main results are Theorem \ref{T:HereditaryAutom} and Corollary \ref{C:SummaryAutomorphisms}  describing the automorphism group of a complete hereditary order. 

\smallskip
\noindent
Complete real hereditary orders are classified in Section \ref{S:RealHereditary}. The main result here is a proof of the fact that there are precisely four non-isomorphic complete maximal scalar local real orders: $\RR\llbracket t\rrbracket$, $\CC\llbracket t\rrbracket$, $\HH\llbracket t\rrbracket$ and $\CC\llbracket t\rrbracket^{\mathsf{tw}}$, where 
\begin{equation}\label{E:twistedseries}
\CC\llbracket z\rrbracket^{\mathsf{tw}}  = \Bigl\{\sum\limits_{k = 0}^\infty \lambda_k z^k \, \Big|\, \lambda_k \in \CC \; \mbox{\rm for all} \; k \in \NN_0\Bigr\} \;\mbox{\rm and}\;  z \lambda = \bar{\lambda} z \; \mbox{\rm for all} \; \lambda \in \CC.
\end{equation}
Finite-dimensional semi-simple nodal pairs over an arbitrary field $\kk$ are studied in Section \ref{S:SemisimpleNodal}. The corresponding classification is provided by Theorem \ref{T:SemiSimpleNodal}, which is of the key statements  of this work.

\smallskip
\noindent
Finally,  real nodal orders are studied in Section \ref{S:RealNodal}. The corresponding classification is given by Theorem \ref{T:NodalClassification} and Theorem \ref{T:Final}, which are  the main results of this article. 

\smallskip
\noindent
\emph{List of notation}. For convenience of the reader we introduce now the most important
notation used in this paper.

\begin{itemize}
\item Let  $\Gamma$ be a ring. 
\begin{itemize}
\item We denote by $\Gamma^\circ$ the opposite ring of $\Gamma$,  by $\Gamma^\ast$ the group of multiplicative units of $\Gamma$ and by $\Aut(\Gamma)$ the group of ring automorphisms of $\Gamma$. 
\item For any $a \in \Gamma^\ast$ we denote by  $\Ad_a \in \Aut(\Gamma)$ the corresponding inner automorphism given by the rule $\Ad_a(x) = a x a^{-1}$ for any $x \in \Gamma$.
\item The Jacobson radical of $\Gamma$ is denoted by $\rad(\Gamma)$. 
\item For any $p \in \NN$ we denote by $M_p(\Gamma)$ the ring of matrices of size $(p \times p)$ with coefficients in $\Gamma$. 
\item If $M$ is a left $\Gamma$-module then for any $a \in \Gamma$ we have the corresponding left multiplication map
$M \stackrel{\lambda_a}\lar M, x \mapsto ax$. Similarly, if $M$ is a right $\Gamma$-module then we have the right multiplication map $M \stackrel{\rho_a}\lar M, x \mapsto xa$.
\end{itemize}
\item Let $K$ be a field.
\begin{itemize}
\item We denote by $\mathsf{Br}(K)$ the Brauer group of $K$. 
\item Next, $K\llbracket t\rrbracket$  is the $K$-algebra of formal power series with coefficients in $K$ and $K\llbrace t\rrbrace$ is the corresponding quotient field of formal Laurent series. 
\end{itemize}
\item As usual,  $\RR$ is  the fields of real numbers and $\CC$ is the field of complex numbers, whereas $\HH$ is the  algebra of quaternions over $\RR$. Next, we put  $\KK = \RR\llbrace t\rrbrace$, $\LL = \CC\llbrace t\rrbrace$.
\item If not explicitly otherwise stated, all orders in our paper are assumed to be \emph{semi-local} and \emph{complete}.
\end{itemize}

\smallskip
\noindent
\emph{Acknowledgement}. The work of the first-named author was partially supported by the German Research Foundation
SFB-TRR 358/1 2023 -- 491392403.

\section{Basic features of nodal orders}\label{S:NodalOrders}

\noindent
Let $A$ be a ring, $A^\ast$ be its group of units and 
\begin{equation}\label{E:Jacobson}
J = \mathsf{rad}(A) = \left. \left\{a \in A \, \right| \, 1 + b a c \in A^\ast \; \mbox{\rm for all} \; b, c \in A \right\},
\end{equation}
be its Jacobson radical.
 Recall that  $A$ is  \emph{semi-local} if $\oA:= A/J$ is artinian (note that $\oA$ is automatically semi-simple). If $\oA$ is simple then $A$ is called \emph{local}. Moreover, $A$ is \emph{scalar local} if $\oA$ is a skew field. If, $A/J$ is a direct product 
  of skew fields, we call $A$ \emph{basic}. 
   Also, if $M$ is a left  $A$-module, we put 
  $\oM := M/JM$ and denote by $\ell_A(M)$ its length.   For a pair of semi-local rings $A\subseteq  H$ we denote 
  $$\ell^*(A,H)=\sup\Bigl\{\ell_A(H\otimes_A U) \, \big|\,  U \text{ is a simple left $A$--module}\Bigr\}.$$

  \begin{definition}\label{nod} 
  Let $A\subseteq H$ be a pair of semi-local rings.
  \begin{enumerate}
  \item[(i)]  This pair is called {Backstr\"om} if $\rad(A)=\rad(H)$. 
  \item[(ii)]  It is called \emph{nodal} if it is Backstr\"om and  $\ell^*(A,H)\le2$.
  \end{enumerate}
  \end{definition}

\noindent  
  The following fact is obvious.
   \begin{proposition}\label{modrad} 
   If a pair $A\sb H$ is nodal, so is the pair $\oA\sb\oH$. Conversely, if $A\sb H$  is Backstr\"om and the pair $\oA\sb\oH$ is nodal,
   then the  pair $A\sb H$ is nodal too.
   \end{proposition}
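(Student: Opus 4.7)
The plan is to reduce both directions of the proposition to the single identification
$$H\otimes_A U \;\cong\; \bar H \otimes_{\bar A} U$$
for every simple left $A$-module $U$, which will immediately yield $\ell^*(A,H)=\ell^*(\bar A,\bar H)$ under the Backstr\"om hypothesis $J:=\rad(A)=\rad(H)$.

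First, I would dispense with the Backstr\"om condition in the quotient: since $A$ and $H$ are semi-local, the rings $\bar A=A/J$ and $\bar H=H/J$ are semi-simple, so $\rad(\bar A)=0=\rad(\bar H)$. Thus the Backstr\"om condition for the pair $\bar A\subseteq\bar H$ is automatic, and what remains is to compare the two length invariants $\ell^*(A,H)$ and $\ell^*(\bar A,\bar H)$.

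Next comes the key computation. Because $J=\rad(H)$ is a two-sided ideal in $H$ containing $J\cdot 1$, one has $HJ=J$, hence $H\otimes_A\bar A = H\otimes_A(A/J)=H/HJ=H/J=\bar H$. Any simple $A$-module $U$ is annihilated by $J$, so $U\cong\bar A\otimes_{\bar A}U$ as $A$-modules, and associativity of tensor product gives
$$H\otimes_A U \;=\; (H\otimes_A \bar A)\otimes_{\bar A}U \;=\; \bar H\otimes_{\bar A}U.$$
The $A$-action on the right-hand side factors through $\bar A$, so $A$-submodules coincide with $\bar A$-submodules and $\ell_A(H\otimes_A U)=\ell_{\bar A}(\bar H\otimes_{\bar A}U)$. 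Since simple $A$-modules correspond bijectively to simple $\bar A$-modules, taking the supremum gives $\ell^*(A,H)=\ell^*(\bar A,\bar H)$.

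Both implications follow immediately: if $A\subseteq H$ is nodal then $\ell^*(\bar A,\bar H)=\ell^*(A,H)\le 2$, and the quotient pair is Backstr\"om for free; conversely, if $A\subseteq H$ is Backstr\"om and $\ell^*(\bar A,\bar H)\le 2$, then $\ell^*(A,H)\le 2$ as well. I do not anticipate any real obstacle: the only mildly delicate point is verifying $HJ=J$ and tracking that the $A$-length and $\bar A$-length of $H\otimes_A U$ agree, both of which are immediate from the Backstr\"om identification of radicals.
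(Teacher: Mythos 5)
Your proof is correct. The paper itself gives no argument, labeling the proposition ``obvious,'' so there is no authorial proof to compare against; your write-up supplies the verification that the authors suppressed. The reduction to the single isomorphism $H\otimes_A U \cong \bar H\otimes_{\bar A} U$ for $U$ simple is exactly the right move: under $J := \rad(A) = \rad(H)$ one has $HJ = J$ (two-sided ideal of $H$ containing $1\cdot J$), hence $H\otimes_A\bar A = H/HJ = \bar H$, and associativity of the tensor product gives the claimed identification as $H$-modules, compatibly with the $A$-actions since $A\hookrightarrow H\to\bar H$ factors through $\bar A$. Since $J$ annihilates $H\otimes_A U$ (indeed $j(h\otimes u) = (jh)\otimes u$ with $jh\in JH = J\subseteq A$, so it equals $1\otimes (jh)u = 0$), the $A$-submodule lattice equals the $\bar A$-submodule lattice and $\ell_A(H\otimes_A U) = \ell_{\bar A}(\bar H\otimes_{\bar A}U)$. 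Combined with the observation that the quotient pair is automatically Backstr\"om because $\bar A$, $\bar H$ are semi-simple, and the bijection between simple $A$-modules and simple $\bar A$-modules, this gives $\ell^*(A,H) = \ell^*(\bar A,\bar H)$, from which both implications follow.
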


\noindent
Let $R$ be  an excellent  reduced  equidimensional ring of {Krull dimension one}  and
$K := \mathsf{Quot}(R)$ be the corresponding total ring of fractions. 

\begin{definition}\label{D:Order} An $R$-algebra $A$ is an \emph{order} over $R$ if the following conditions are satisfied:
\begin{itemize}
\item[(i)]  $A$ is a finitely generated torsion free $R$-module.
  \item[(ii)] $A_K:= K \otimes_R A$ is a  semi-simple $K$-algebra.
\end{itemize}
\end{definition}

\noindent
Note the following easy but useful fact; see for instance \cite[Lemma 2.8]{bdnpdalcurves}.
\begin{lemma}
Let $R$ be as above, $R' \subseteq R$ be a finite ring extension and $A$ be an $R$-algebra. Then $A$ is an order over $R$  if and only it  is an order over $R'$.
Moreover, if $K' := \mathsf{Quot}(R')$ then we have: $A_K \cong A_{K'}$.
\end{lemma}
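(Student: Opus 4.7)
The plan is to verify that the two defining properties of an order can each be checked equivalently over $R$ or over $R'$, with the bridge being a canonical identification $K \cong K' \otimes_{R'} R$ of total rings of fractions.

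\textbf{Condition (i).} Finite generation is straightforward: since $R$ is a finite $R'$-module, any finitely generated $R$-module is automatically finitely generated as an $R'$-module, and conversely. For the torsion-free part, the key fact to establish is that the non-zerodivisors of $R'$ are precisely those non-zerodivisors of $R$ that happen to lie in $R'$. This is where the equidimensionality and reducedness in Krull dimension one enter: the finite morphism $\mathsf{Spec}(R) \to \mathsf{Spec}(R')$ maps the set of minimal primes of $R$ onto the set of minimal primes of $R'$, and the non-zerodivisors in each of $R$, $R'$ are the complements of the unions of these minimal primes. Granted this, an $R$-module $A$ is $R$-torsion-free \emph{iff} every non-zerodivisor of $R'$ acts injectively on $A$, which is the $R'$-torsion-free condition.

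\textbf{Condition (ii) and the identification $A_K\cong A_{K'}$.} Writing $S$ for the non-zerodivisors of $R'$, I would prove that $S^{-1}R$ equals the total ring of fractions of $R$, which gives $K \cong K' \otimes_{R'} R$. One inclusion is clear since $S$ maps into the non-zerodivisors of $R$ by the previous step. For the other inclusion, one uses integrality of $R$ over $R'$: every $t \in R$ satisfies a monic equation $t^n + a_1 t^{n-1} + \ldots + a_n = 0$ with $a_i \in R'$, and for $t$ a non-zerodivisor a suitable choice of the integral relation (e.g.\ from the minimal polynomial of $t$ over $K'$) forces $a_n \in S$, whence $t$ becomes invertible in $S^{-1}R$ via the identity $t\cdot(t^{n-1}+a_1 t^{n-2}+\ldots+a_{n-1}) = -a_n$.

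With $K \cong K' \otimes_{R'} R$ in hand, for any $R$-module $A$ the associativity of tensor product yields
$$A_{K'} \;=\; K' \otimes_{R'} A \;\cong\; (K' \otimes_{R'} R) \otimes_R A \;\cong\; K \otimes_R A \;=\; A_K,$$
since $A$ is already an $R$-module and so $R \otimes_R A = A$. This gives simultaneously the isomorphism claimed in the lemma and the equivalence of the semi-simplicity hypothesis~(ii), completing the proof.

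The main obstacle is the control of non-zerodivisors under the finite extension $R' \subseteq R$ --- both for the torsion-free condition and for the equality of the total rings of fractions. This is precisely the point at which the excellence, reducedness, and above all \emph{equidimensionality of Krull dimension one} of $R$ and $R'$ play an essential role; without these hypotheses the two notions of ``order'' would genuinely diverge.
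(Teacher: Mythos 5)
The paper itself does not prove this lemma but cites \cite{bdnpdalcurves} for it, so there is no internal argument to compare against; I am assessing your proposal on its own merits. Its overall strategy is sound, and your identification of where the hypotheses (reducedness, equidimensionality, Krull dimension one) enter is correct.

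There is, however, a logical gap in the torsion-free part. After establishing that the non-zerodivisors of $R'$ are exactly the non-zerodivisors of $R$ that lie in $R'$, you write ``Granted this, an $R$-module $A$ is $R$-torsion-free iff every non-zerodivisor of $R'$ acts injectively on $A$.'' The forward implication is immediate, but the converse does not follow from the stated fact alone: the $R'$-torsion-free hypothesis only constrains elements of $R'$, which is a priori a strictly smaller set than the full set of non-zerodivisors of $R$, so one cannot directly conclude that an arbitrary non-zerodivisor $t \in R$ acts injectively on $A$. What actually closes this gap is precisely the lemma you establish in the next paragraph, namely that every non-zerodivisor of $R$ becomes a unit in $S^{-1}R$. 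Granting that, if $A$ is $R'$-torsion-free then $A \hookrightarrow S^{-1}A$, and in the $S^{-1}R$-module $S^{-1}A$ the relation $ta = 0$ with $t$ a unit forces $a = 0$, so $a = 0$ already in $A$. You should make this dependence explicit: as written, the ``iff'' in your Condition~(i) paragraph is presented as if it followed from the non-zerodivisor description, which it does not.

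On a secondary point, the parenthetical ``(e.g.\ from the minimal polynomial of $t$ over $K'$)'' is imprecise: $K'$ is a finite product of fields, not a field, so minimal polynomials are not canonically defined; one would have to work component-by-component and pad degrees before gluing. A cleaner route is available which also subsumes the previous paragraph: the ring $S^{-1}R$ is a finite module over the Artinian ring $K' = S^{-1}R'$, hence Artinian, and it is reduced as a localization of the reduced ring $R$, hence a finite product of fields; any non-zerodivisor $t$ of $R$ remains a non-zerodivisor of $S^{-1}R$ by exactness of localization, and a non-zerodivisor in a finite product of fields is a unit. This simultaneously proves $S^{-1}R = K$, hence $A_{K'} \cong A_K$, and supplies the missing step in the torsion-free argument, without any integrality computation.
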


\begin{definition}
A ring $A$ is an \emph{order} if 
 its center $R = Z(A)$ is a reduced excellent  equidimensional ring  of Krull dimension one, and $A$ is an order over $R$. If 
$K:= \mathsf{Quot}(R)$ then $Q(A):= K \otimes_R A$ is called the \emph{rational envelope} of  $A$.
\end{definition}

\begin{definition}\label{D:NodalOrders}
An order $A$ is called \emph{nodal} if its center is semi-local  and there exists a \emph{hereditary} overorder $H \supseteq A$ such that $(A, H)$ is a nodal pair.
\end{definition}

\begin{remark}
It is clear that an order  $A$ is nodal if and only if its radical completion $\widehat{A}$ is nodal. Moreover, given  a nodal order $A$,  a hereditary overorder $H$ from Definition \ref{D:NodalOrders} is 
 \emph{uniquely determined} and admits the following intrinsic description:
\begin{equation}\label{E:HereditaryCover}
H = \bigl\{a\in Q(A) \, \big| \, a J \subseteq J\bigr\} \cong \End_A(J),
\end{equation} where $J$ is viewed as a right $A$-module  (this is essentially a consequence of \cite[Theorem 39.11 and Corollary 39.12]{ReinerMO}).  For a nodal order $A$, the order
$H$ will be called the \emph{hereditary cover} of $A$. Hereditary orders form a special subclass of nodal orders. 
\end{remark}

\begin{definition}
Let $\kk$ be a field. A $\kk$-order is a  $\kk$-algebra $A$ with a semi-local center $R$ such that $R/\rad(R)$ is finite-dimensional over $\kk$ (note that $A$ is semi-local, too).  We say that such $A$ is \emph{complete} if $R$  is a complete $\kk$-algebra. Two $\kk$-orders are called isomorphic if they are isomorphic as $\kk$-algebras. 
\end{definition}

\begin{remark} Let $A$ be a complete $\kk$-order. 
Using Noether normalisation, we can view  $A$ as  an order over the $\kk$-algebra of formal power series $\kk\llbracket t\rrbracket$ (appropriately embedded into the center of $A$). From now on,  $\kk$-orders are assumed to be complete.
\end{remark}

\begin{example} The following $\kk$-algebras are nodal orders. 
\begin{enumerate}
\item[(i)] Let $A = \kk\llbracket x, y\rrbracket/(xy)$. 
Then in the above notation we have:  $Q(A) \cong \kk\llbrace x\rrbrace \times \kk\llbrace y\rrbrace$ and $H \cong \kk\llbracket x\rrbracket \times \kk\llbracket y\rrbracket$. The corresponding semi-simple nodal pair $(\bar{A}, \bar{H}\bigr)$ is given by the diagonal  embedding $\kk \stackrel{\mathsf{diag}}\lar \kk \times \kk$. 
\item[(ii)] Let $R = \kk\llbracket t\rrbracket$, $I = (t)$ and 
$$
M_2(R) \supset
A = 
\left. \left\{\left(
\begin{array}{cc}
a_{11} & a_{12} \\
a_{21} & a_{22} 
\end{array} \right)  \; \right| \; a_{11}(0) = {a}_{22}(0) \; \mbox{\rm and} \; a_{12}(0) = 0
\right\} \cong \kk\llbraket x, y\rrbraket/(x^2, y^2).
$$
Here, $\kk\llbraket x,y\rrbraket$ is the algebra of non-commutative formal power series in $x$ and $y$ and we identify $x$ with 
$\left(\begin{array}{cc}
0 & t \\
0 & 0
\end{array} \right)$ and $y$ with $\left(\begin{array}{cc}
0 & 0 \\
1 & 0
\end{array} \right)$. 
Then  we have:
$Q(A) = M_2\bigl(\kk\llbrace t\rrbrace\bigr)$ and 
$H = \left(\begin{array}{cc} R & I \\ R & R \end{array}\right)$. The induced nodal pair $(\bar{A}, \bar{H}\bigr)$ is given again by the diagonal  embedding $\kk \stackrel{\mathsf{diag}}\lar \kk \times \kk$.
\item[(iii)] Now, let $A = \left(\begin{array}{cc} R & I \\ I  & R \end{array}\right)$.
Then $Q(A) = M_2\bigl(\kk\llbrace t\rrbrace\bigr)$ and $H = M_2(R)$. This time, the semi-simple  nodal pair $(\bar{A}, \bar{H}\bigr)$ is given by
by the canonical   embedding $\kk \times \kk \stackrel{\mathsf{can}}\lar  M_2(\kk)$.
\item[(iv)] Consider now the order $A = \left(\begin{array}{ccc} R & I  &  I\\ R  & R & I \\ R & I & R \end{array}\right)$. Then  $Q(A) = M_3\bigl(\kk\llbrace t\rrbrace\bigr)$, whereas $H = \left(\begin{array}{ccc} R & I  & I \\ R  & R & R \\ R & R & R \end{array}\right)$. The  induced  nodal pair $(\bar{A}, \bar{H}\bigr)$ is given by the embedding
$
\kk \times \bigl(\kk \times \kk\bigr) \xrightarrow{\mathsf{id} \times \mathsf{can}} \kk \times M_2(\kk).
$
In this case, $A$ is isomorphic to the arrow completion of the path algebra of the so-called Gelfand quiver
\begin{equation*}
\xymatrix
{
- \ar@/^/[rr]^{a_{-}}  & &  \star \ar@/^/[ll]^{b_{-}}
 \ar@/_/[rr]_{b_{+}}
 & &
\ar@/_/[ll]_{a_{+}} +}  \quad a_{-} b_{-} = a_{+} b_{+}.
\end{equation*}
For $\kk = \CC$, the category of finite-dimensional left $A$-modules is equivalent to the principal block of the category of Harish-Chandra modules for the Lie group 
$\mathsf{SL}_2(\RR)$; see \cite{Gelfand} as well as \cite{ABR} for some recent developments. 
\end{enumerate}
\end{example}

\noindent
 In the case $\kk$ is algebraically closed, nodal $\kk$-orders were classified in \cite{Voloshyn}.
The next example illustrates some phenomena arising in the classification of real nodal orders, which are absent in the complex case.

\begin{example} Let $\KK = \RR\llbrace t\rrbrace$, 
$\LL = \CC\llbrace t\rrbrace$, 
$\OO = \RR\llbracket t\rrbracket$ and  $\PP = \CC\llbracket t\rrbracket$. The following $\RR$-algebras are nodal orders.
\begin{enumerate}
\item[(i)] Let $A = \RR\llbracket x, y\rrbracket/(x^2 + y^2)$. 
Then we have:  $Q(A) \cong \LL$ and $H \cong \PP$. The embedding $A \longhookrightarrow H$ is given by the assignment $x \mapsto t, y \mapsto i t$. The corresponding semi-simple nodal pair $(\bar{A}, \bar{H}\bigr)$ is given by the   embedding $\RR \longhookrightarrow \CC$. 
\item[(ii)] Consider $H = \CC\llbracket x\rrbracket^{\mathsf{tw}} \times \CC\llbracket y\rrbracket \supset A = 
\left\{(f, g) \; \big| \; f(0) = g(0) \right\}$; see (\ref{E:twistedseries}) for the definition of twisted power series. The corresponding semi-simple  nodal pair $(\bar{A}, \bar{H}\bigr)$ is given  by the diagonal  embedding $\CC \stackrel{\mathsf{diag}}\lar \CC \times \CC$.
\item[(iii)]  
Let $$
M_2(\OO) \supset
A = 
\left. \left\{\left(\begin{array}{cc}
a_{11} & a_{12} \\
a_{21} & a_{22} 
\end{array} \right)  \; \right| \; a_{11}(0) = a_{22}(0) \; \mbox{\rm and} \; a_{12}(0) = - a_{21}(0)   \right\}.$$ 
Then we have: $Q(A) = M_2(\KK)$ and $H = M_2(\OO)$. The nodal pair $(\bar{A}, \bar{H}\bigr)$ is given by the embedding
$$
\CC \cong \left. \left\{
\left(\begin{array}{cc}
a & b \\
-b & a
\end{array}\right) \right| a, b \in \RR
\right\} \subset M_2(\RR).
$$
 \item[(iv)]  
Consider  $$
M_2(\PP) \supset
A = 
\left. \left\{\left(\begin{array}{cc}
a_{11} & a_{12} \\
a_{21} & a_{22} 
\end{array} \right)  \; \right| \; a_{11}(0) = \bar{a}_{22}(0) \; \mbox{\rm and} \; a_{12}(0) = 0   \right\}.$$
Then  $Q(A) = M_2(\LL)$ and $H = \left. \left\{\left(\begin{array}{cc}
a_{11} & a_{12} \\
a_{21} & a_{22} 
\end{array} \right)  \; \right| \; a_{12}(0) = 0   \right\} \subset M_2(\PP)$. The nodal pair $(\bar{A}, \bar{H}\bigr) $ is given by the embedding
$
\CC \xrightarrow{\mathsf{diag}^\ast} \CC \times \CC, \; a \mapsto (a, \bar{a}).
$ In this case we have: 
$
A \cong \RR\llbraket x,y, i\rrbraket/(x^2, y^2, i^2 +1, xi + ix, yi + i y).
$
\item[(v)] In the same vein, let $$
M_2(\PP) \supset
A = 
\left. \left\{\left(\begin{array}{cc}
a_{11} & a_{12} \\
a_{21} & a_{22} 
\end{array} \right)  \; \right| \; a_{11}(0) = \bar{a}_{22}(0) \; \mbox{\rm and} \; a_{12}(0) =   -\bar{a}_{21}(0) \right\}.$$
Then  $Q(A) = M_2(\LL)$ and $H =  M_2(\PP)$. The nodal pair $(\bar{A}, \bar{H}\bigr) $ is given by the embedding
$$
\HH \cong \left. \left\{
\left(\begin{array}{rc}
a & b \\
-\bar{b} & \bar{a}
\end{array}\right) \right| a, b \in \CC
\right\} \subset M_2(\CC),
$$
where $\HH$ is the skew field of Hamilton quaternions. 
\end{enumerate}
\end{example}

\begin{theorem}\label{T:Cartesian} Let $\kk$ be any field and $A$ be a nodal $\kk$-order. Let $J$ be the Jacobson radical of $A$, $H$ be the hereditary cover of $A$, $A \stackrel{\pi_\circ}\rightarrowdbl \bar{A} = A/J$ and $H 
\stackrel{\pi}\rightarrowdbl \bar{H} = H/J$ be the corresponding canonical projections and $A \stackrel{\imath_\circ}\longhookrightarrow H$, and $\bar{A} \stackrel{\imath}\longhookrightarrow \bar{H}$ be the corresponding canonical inclusions.  Then the commutative diagram 
\begin{equation}\label{E:NodalCartesian}
\begin{array}{c}
\xymatrix{
A \ar@{->>}[r]^-{\pi_\circ} \ar@{_{(}->}[d]_{\imath_\circ} & \bar{A} \ar@{^{(}->}[d]^-{\imath}\\
H \ar@{->>}[r]^-{\pi}  & \bar{H}
}
\end{array}
\end{equation}
is Cartesian (i.e.~both pull-back and push-out) in the category of $\kk$-algebras. 

Let $A'$ be another nodal $\kk$-order and $A \stackrel{\phi}\lar A'$ be an isomorphism of $\kk$-orders. Then the following diagram in the category of $\kk$-algebras 
\begin{equation}\label{E:Banalities}
\begin{array}{c}
\xymatrix{
H  \ar[d]_-{\tilde{\phi}}& A \ar@{_{(}->}[l]_{\imath_\circ} \ar@{->>}[r]^-{\pi_\circ} \ar[d]^-{\phi} & \bar{A} \ar[d]^-{\bar\phi}\\
H'  & A' \ar@{_{(}->}[l]_{\imath'_\circ} \ar@{->>}[r]^-{\pi'_\circ} & \bar{A}'
}
\end{array}
\end{equation}
is commutative, where $\tilde\phi$ and $\bar\phi$ are isomorphisms induced by $\phi$. 

Conversely, let $H$ be a hereditary $\kk$-order and $\Lambda$ a finite-dimensional semi-simple $\kk$-subalgebra of $\bar{H}$ such that $(\Lambda, \bar{H})$ is a semi-simple nodal pair over $\kk$. Let 
$\Lambda \stackrel{\jmath}\hookrightarrow \bar{H}$ be the corresponding embedding and 
\begin{equation}\label{E:Reconstruction}
A = \bigl\{a \in H  \, \big| \, \pi(a) \in \mathrm{Im}(\jmath)\bigr\}
\end{equation}
 be the  pull-back of $\pi$ and $\jmath$, so that we have a commutative diagram 
 \begin{equation}\label{E:NodalReconstruction}
\begin{array}{c}
\xymatrix{
A \ar@{->>}[r]^-{{\pi}_\ast} \ar@{_{(}->}[d]_{\jmath_\circ} & \Lambda \ar@{^{(}->}[d]^-{\jmath}\\
H \ar@{->>}[r]^-{\pi}  & \bar{H}
}
\end{array}
\end{equation}
  Then $A$ is a nodal $\kk$-order, $H$ is isomorphic to its hereditary cover, $\Lambda$ is isomorphic to its semi-simple quotient  and both diagrams (\ref{E:NodalCartesian}) and (\ref{E:NodalReconstruction}) can be naturally identified. 
\end{theorem}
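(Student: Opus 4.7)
The plan splits naturally along the three assertions of the theorem.

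\emph{Cartesian property of (\ref{E:NodalCartesian}).} Put $J := \rad(A) = \rad(H)$, using the Backstr\"om condition. Then $J$ is a two-sided ideal in both $A$ and $H$, and the equality
\[
A \;=\; \bigl\{h \in H \, \big|\, \pi(h) \in \imath(\bar A)\bigr\}
\]
holds inside $H$: any $h\in H$ with $\pi(h)\in\bar A$ can be written as $a+j$ with $a\in A$ and $j\in J\subseteq A$, hence $h\in A$; the converse inclusion is evident. This exhibits (\ref{E:NodalCartesian}) as a pull-back in the category of $\kk$-algebras. For the push-out, both horizontal arrows have kernel $J$; since $J$ is already a two-sided ideal in $H$, the push-out of $\bar A \stackrel{\pi_\circ}{\longleftarrow} A \stackrel{\imath_\circ}{\hookrightarrow} H$ is $H/(HJH)=H/J=\bar H$. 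Hence the square is bicartesian.

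\emph{Functoriality under isomorphism.} Any $\kk$-algebra isomorphism $\phi\colon A \to A'$ preserves Jacobson radicals, so it descends to $\bar\phi\colon \bar A \stackrel{\sim}\to \bar A'$. Moreover, $\phi$ sends $Z(A)$ onto $Z(A')$ and regular elements to regular elements, hence extends uniquely to an isomorphism $\phi_K\colon Q(A) \stackrel{\sim}\to Q(A')$ of rational envelopes. By the intrinsic description (\ref{E:HereditaryCover}), the restriction $\tilde\phi := \phi_K|_H$ yields the desired isomorphism $H \stackrel{\sim}\to H'$. Commutativity of (\ref{E:Banalities}) is immediate from the construction.

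\emph{Reconstruction.} Given $H$ hereditary and $\Lambda\subseteq\bar H$ as prescribed, set $J:=\rad(H)$ and $A:=\pi^{-1}(\Lambda)$, so that (\ref{E:NodalReconstruction}) is Cartesian by construction. Since $J\subseteq A$ and $A/J\cong\Lambda$ is finite-dimensional semi-simple, $\rad(A)\supseteq J$. Conversely, for $j\in J$ and $b,c\in A\subseteq H$, the geometric series $\sum_{k\ge 0}(-bjc)^k$ converges in the complete ring $H$ and provides an inverse of $1+bjc$ lying in $1+J\subseteq A$; thus $J\subseteq\rad(A)$. Hence $(A,H)$ is Backstr\"om with $\bar A=\Lambda$, and Proposition \ref{modrad} applied to the nodal pair $(\Lambda,\bar H)$ shows $(A,H)$ is nodal. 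The facts that $H$ is module-finite over $Z(H)$ and that $\bar H/\Lambda$ is finite-dimensional over $\kk$ imply, by standard noetherian descent, that $A$ is itself a $\kk$-order with the same rational envelope $Q(A)=Q(H)$. Finally, $H$ is hereditary, Backstr\"om over $A$, and satisfies $\rad(H)=\rad(A)$; by the intrinsic characterization (\ref{E:HereditaryCover}) together with the uniqueness remark following Definition \ref{D:NodalOrders}, $H$ coincides with the hereditary cover of $A$, and $\Lambda\cong\bar A$ is its semi-simple quotient. The identification of (\ref{E:NodalCartesian}) with (\ref{E:NodalReconstruction}) is then tautological.

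\emph{Main obstacle.} The delicate point lies in the reconstruction step, specifically in verifying that the pull-back $A$ is a genuine $\kk$-order in the sense of Definition \ref{D:Order} (reduced excellent equidimensional center of Krull dimension one, with $A$ module-finite over it), and that the prescribed $H$ is truly the hereditary cover rather than merely some hereditary over-order. The first issue is resolved by observing that $A$ inherits torsion-freeness and module-finiteness from $H$ via the finite-codimensional inclusion $A\subseteq H$. The second relies crucially on the uniqueness statement of the hereditary cover recalled after Definition \ref{D:NodalOrders}, together with the equality $\rad(H)=\rad(A)$ established above.
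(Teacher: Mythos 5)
Your proof is correct and tracks the structure of the paper's own argument (pull-back, push-out, functoriality, reconstruction), but two of the steps are handled by genuinely different means. For the push-out, the paper verifies the universal property directly with a diagram chase, whereas you invoke the general fact that the pushout of a quotient $A \twoheadrightarrow A/J$ along $A \hookrightarrow H$ is $H/(HJH)$, which collapses to $\bar H$ because $J$ is already a two-sided ideal of $H$; both work, yours is slightly slicker once that fact is accepted. The more interesting divergence is in the reconstruction step, in the proof that $J \subseteq \rad(A)$: the paper goes through the Curtis--Reiner description $\rad(A)=\{a\in A: a^n\in tA\}$ combined with the finite-torsion bound $t^k(H/A)=0$, while you use the completeness of $H$ (valid here, since $\kk$-orders are assumed complete, so $H$ is $J$-adically complete) and invert $1+bjc$ via the geometric series, noting that the inverse lies in $1+J\subseteq A$. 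Your argument is cleaner and avoids the auxiliary reference, at the cost of leaning explicitly on the standing completeness hypothesis; the paper's argument would also go through under a weaker (non-complete) Henselian-type hypothesis. Your remaining steps --- the pull-back as $\pi^{-1}(\imath(\bar A))$ using the Backstr\"om equality of radicals, the functoriality via the intrinsic description (\ref{E:HereditaryCover}) of the hereditary cover, the appeal to Proposition \ref{modrad} to upgrade Backstr\"om plus nodality of $(\Lambda,\bar H)$ to nodality of $(A,H)$, and the finiteness/torsion argument giving $Q(A)=Q(H)$ --- all match the paper in substance. The only thing I would tighten is the phrase ``standard noetherian descent'': what is actually used is that $A\subseteq H$ is a submodule of a finitely generated module over the noetherian complete ring $\kk\llbracket t\rrbracket\subseteq Z(H)$, hence finitely generated and torsion-free, and that $H/A$ is $t$-torsion so $K\otimes_R A = K\otimes_R H$; stating this explicitly (as the paper does via the short exact sequence $0\to A\to H\oplus\Lambda\to\bar H\to 0$) would make the step watertight.
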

\begin{proof} Let $A$ be a nodal $\kk$-order. Since $
A =  \bigl\{a \in H  \, \big| \, \pi(a) \in \mathrm{Im}(\imath)\bigr\}
$, it follows that (\ref{E:NodalCartesian}) is a pull-back diagram.  

Let $B$ be a $\kk$-algebra and $H \stackrel{f}\lar B$ and $\bar{A} \stackrel{g}\lar B$ be homomorphisms of $\kk$-algebras such that $g \pi_\circ = f \imath_\circ$. It follows that $f(I) = 0$. Hence, we get a unique homomorphism $\bar{H} \stackrel{\bar{f}}\lar B$ such that $\bar{f} \pi = f$. 
\begin{equation*}
\begin{array}{c}
\xymatrix{
A \ar@{->>}[r]^-{\pi_\circ} \ar@{_{(}->}[d]_{\imath_\circ} & \bar{A} \ar@{_{(}->}[d]^-{\imath} \ar@/^/[ddr]^g  & \\
H \ar@{->>}[r]^-{\pi}  \ar@/_/[drr]_f & \bar{H} \ar@{.>}[dr]|-{\bar{f}} & \\
& & B
}
\end{array}
\end{equation*}
But we also have: $\bar{f} \imath = g$, since $\bar{f} \imath  \pi_\circ = g \pi_\circ$ and $\pi_\circ$ is an epimorphism. This shows that (\ref{E:NodalCartesian}) is also a push-out diagram, as asserted. 

Next, let $A \stackrel{\phi}\lar A'$ be an isomorphism of nodal $\kk$-orders. It is clear that $\phi(J) = J'$ and that $\phi$ induces an isomorphism of the corresponding rational hulls $Q(A) \stackrel{\tilde\phi}\lar Q(A')$. It follows from the description (\ref{E:HereditaryCover}) of the hereditary cover of $A$ that we indeed get a commutative diagram (\ref{E:Banalities}). 

Now, let $H$ be a hereditary $\kk$-order, $J$ be its Jacobson radical  and $\Lambda  \subseteq  \bar{H}$ be  a semi-simple nodal pair over $\kk$. By assumption, there exists a finite extension of $\kk$-algebras $R = \kk\llbracket t\rrbracket \subseteq  Z(H)$. Then $H$ is an order over $R$. It follows from (\ref{E:Jacobson}) that  $t H \subseteq J$. Hence,  $\bar{H}$ and $\Lambda$ are finitely generated $R$-modules, on which  $t$ acts by zero. Let $A$ be given by (\ref{E:Reconstruction}). Then we have a short exact sequence of $R$-modules
$$
0 \lar A \stackrel{\left(\begin{smallmatrix}
\imath_\circ \\ \pi_\circ
\end{smallmatrix} \right)}\lar H \oplus \Lambda \xrightarrow{(\pi, -\imath)} \bar{H} \lar 0. 
$$
It is clear that $A$ is a finitely generated torsion free $R$-module. Let $K = Q(R) = \kk\llbrace t\rrbrace$. Since $K \otimes_R \Lambda = 0 = K \otimes_R \bar{H}$, it follows that 
$$
Q(A) = K \otimes_R A = K \otimes_R H = Q(H).
$$
As a consequence, $A$ is a $\kk$-order and $H$ is its hereditary overorder. 
Moreover, $J$ is a two-sided ideal in $A$ and $H/J \supseteq A/J = \Lambda$.
Since $\Lambda$ is semi-simple, we have $J' \subseteq J$, where $J'$ is the Jacobson radical of $A$. 
It remains to show the opposite inclusion $J \subseteq J'$. 

It follows from \cite[Proposition 5.22]{CurtisReiner} that 
\begin{equation}\label{E:DescrRadical}
J =  \left\{h \in H \, \big| \, h^n \in t H\; \mbox{\rm for  some} \; n \in \NN \right\}  \; \mbox{\rm and} \; J' = \left\{a \in A \, \big| \, a^n \in t A\; \mbox{\rm for  some} \; n \in \NN \right\}.
\end{equation}
Moreover, (\ref{E:DescrRadical}) implies that for any $k\in \NN$ we have: 
\begin{equation*}
J =  \left\{h \in H \, \Big| \, h^l \in t^{k+1} H\; \mbox{\rm for  some} \; l \in \NN \right\}.
\end{equation*}
Since $H/A$ is a finite-generated $R$-module and $K \otimes_R (H/A) = 0$, there exists $k \in \NN$ such that $t^k(H/A) = 0$. Let $a \in J$. Then $a \in A$ and $a^l \in t^{k+1} H \subseteq tA$ for some $l \in \NN$. Hence, $a \in J'$, as asserted.
\end{proof}

\begin{corollary}\label{C:Key}
Let $H$ be a hereditary $\kk$-order and $\Lambda \subseteq \bar{H}$ a $\kk$-subalgebra such that $(\Lambda, \bar{H})$ is a nodal pair. We denote by $A = H \vee  \Lambda$ the nodal order defined by the Cartesian diagram (\ref{E:NodalReconstruction}).  Let $(H', \Lambda')$ be another such datum and 
$A' = H' \vee  \Lambda'$. Then $A \cong A'$ as $\kk$-algebras if and only if there exists an isomorphism of $\kk$-algebras $H \stackrel{\psi}\lar H'$  such that $\bar\psi(\Lambda) = \Lambda'$, where $\bar{H} \stackrel{\bar\psi}\lar \bar{H}'$ is the induced isomorphism of the corresponding semi-simple quotients. 
\end{corollary}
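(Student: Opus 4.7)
The plan is to deduce the corollary directly from Theorem \ref{T:Cartesian}; essentially, the equivalence of $\kk$-algebras of $A$ and $A'$ amounts to a compatible isomorphism of their hereditary covers and semi-simple quotients, and this is precisely what diagram (\ref{E:Banalities}) records.

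For the implication ($\Leftarrow$), assume an isomorphism of $\kk$-algebras $\psi\colon H \to H'$ with $\bar\psi(\Lambda) = \Lambda'$ is given. Since $\psi$ sends the Jacobson radical $J$ of $H$ onto the Jacobson radical $J'$ of $H'$ (both agree with the radicals of $A$ and $A'$ by the Backstr\"om property), the pair $(\psi,\bar\psi|_\Lambda)$ is compatible with the projections $\pi$ and $\pi'$. By the universal property of the pullback (\ref{E:NodalReconstruction}) for $A' = H' \vee \Lambda'$, applied to the two maps $\psi\circ\jmath_\circ\colon A\to H'$ and $(\bar\psi|_\Lambda)\circ\pi_\ast\colon A\to\Lambda'$, we obtain a unique $\kk$-algebra homomorphism $\phi\colon A \to A'$. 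Running the same argument with $\psi^{-1}$ yields the inverse, so $\phi$ is an isomorphism.

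For the implication ($\Rightarrow$), suppose $\phi\colon A \to A'$ is an isomorphism of $\kk$-algebras. By Theorem \ref{T:Cartesian}, $\phi$ extends to a commutative diagram (\ref{E:Banalities}), giving isomorphisms $\tilde\phi\colon H \to H'$ and $\bar\phi\colon \bar A \to \bar A'$ compatible with the canonical inclusions and projections. Under the identifications $\bar A = \Lambda \subseteq \bar H$ and $\bar A' = \Lambda' \subseteq \bar H'$ provided by the last assertion of Theorem \ref{T:Cartesian}, the map $\bar\phi$ is precisely the restriction of the induced isomorphism $\overline{\tilde\phi}\colon \bar H \to \bar H'$ to $\Lambda$, with image $\Lambda'$. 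Hence $\psi:=\tilde\phi$ is the required isomorphism.

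The only delicate point is the existence of the canonical extension $\tilde\phi\colon H \to H'$ in diagram (\ref{E:Banalities}), but this is supplied by Theorem \ref{T:Cartesian} via the intrinsic characterisation (\ref{E:HereditaryCover}) of the hereditary cover, together with the fact that $\phi$ automatically extends to an isomorphism $Q(A) \to Q(A')$ of rational envelopes and sends $J$ to $J'$. Thus no further work is required.
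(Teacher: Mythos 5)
Your argument is correct and is exactly the intended one: the corollary is meant to follow directly from Theorem \ref{T:Cartesian}, and the paper leaves the proof implicit. Both implications are sound. For ($\Leftarrow$), your invocation of the universal property of the pullback $A'=H'\vee\Lambda'$ works; it is perhaps worth noting the even quicker set-theoretic version — since $A=\{a\in H\mid \pi(a)\in\Lambda\}$, one has $\psi(A)=\{a'\in H'\mid \pi'(a')\in\bar\psi(\Lambda)\}=A'$ directly — but your version is equally valid. For ($\Rightarrow$), you correctly use the second and third parts of the theorem to identify $H$ with the hereditary cover of $A$ and $\Lambda$ with $\bar A$, so that $\tilde\phi$ from diagram (\ref{E:Banalities}) is the required $\psi$; note that the identification is in fact literal (not merely up to isomorphism), since the proof of the theorem shows $J$ is the Jacobson radical of both $A$ and $H$ and $Q(A)=Q(H)$, so $H$ coincides with the intrinsic hereditary cover (\ref{E:HereditaryCover}).
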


\begin{summary}
The above Corollary \ref{C:Key} means that for a classification of  nodal $\kk$-orders, we have to provide  a description 
\begin{enumerate}
\item[(i)] of all possible hereditary $\kk$-orders $H$ as well as 
\item[(ii)] of all semi-simple nodal pairs $\Lambda \subseteq \bar{H}$ over $\kk$
\end{enumerate}
up to appropriate equivalences.
\end{summary}

\medskip
\noindent 
For a proof of the following result, we refer to 
\cite[Proposition 1.3]{DrozdZembyk}.
\begin{proposition}\label{P:Morita}
Let  $A$ be a nodal $\kk$-order and $A'$ be another $\kk$-order which is Morita equivalent to $A$. Then $A'$ is nodal, too. 
\end{proposition}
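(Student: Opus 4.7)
The plan is to transfer every ingredient of Definition \ref{D:NodalOrders} and Definition \ref{nod} across the Morita equivalence by writing $A' = \End_A(P)^\circ$ for a progenerator $P$ in $\mathsf{mod}$-$A$, and using the companion progenerator $P_H := P\otimes_A H$ over $H$.

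\medskip

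\noindent
\textbf{Step 1 (order structure of $A'$).} First I would verify that $A'$ is again a $\kk$-order. Morita equivalence preserves centers, so $Z(A')\cong Z(A)=:R$ is the same reduced, equidimensional excellent $\kk$-algebra of Krull dimension one. Since $P$ is finitely generated over $A$ and $A$ is finitely generated over $R$, $A'=\End_A(P)^\circ$ is a finitely generated $R$-module; torsion-freeness follows because $P$ is $R$-torsion-free (being a projective $A$-module). For the rational envelope, flat base change along $R\to K$ gives $Q(A')=K\otimes_R \End_A(P)^\circ \cong \End_{A_K}(P_K)^\circ$, which is semi-simple as the endomorphism ring of a finitely generated module over the semi-simple ring $A_K=Q(A)$.

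\medskip

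\noindent
\textbf{Step 2 (hereditary overorder of $A'$).} Next I would construct $H'$ as the Morita partner of $H$ along the extended progenerator. Since $P$ is a finitely generated projective right $A$-module, $P_H=P\otimes_A H$ is a finitely generated projective right $H$-module, and it is a generator because $H=A\otimes_A H$ is a direct summand of $P_H^n$ whenever $A$ is a summand of $P^n$. Setting $H':=\End_H(P_H)^\circ$ therefore yields a ring Morita equivalent to $H$; since global dimension $\le 1$ is Morita invariant, $H'$ is hereditary. The inclusion $A\hookrightarrow H$ induces an injection $P\hookrightarrow P_H$ (by flatness of $P$ over $A$), and tensoring an $A$-linear endomorphism of $P$ with $H$ produces an $H$-linear endomorphism of $P_H$; this gives a canonical embedding $A'\hookrightarrow H'$ that realises $H'$ as an overorder of $A'$ with the same rational envelope $Q(A')$.

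\medskip

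\noindent
\textbf{Step 3 (nodality of the pair $(A',H')$).} The key point is that the Morita equivalence between $A$ and $A'$ naturally extends to an equivalence between the bimodule categories, under which the $A$-bimodule $H$ corresponds precisely to the $A'$-bimodule $H'$. Concretely, one checks the $(H',A)$-bimodule isomorphism $H'\otimes_{A'} P \cong P\otimes_A H = P_H$, which implies that the diagram of functors
\begin{equation*}
\begin{array}{ccc}
A\text{-}\mathsf{mod} & \xrightarrow{\,H\otimes_A-\,} & H\text{-}\mathsf{mod}\\
\Phi\bigl\downarrow & & \Psi\bigl\downarrow\\
A'\text{-}\mathsf{mod} & \xrightarrow{\,H'\otimes_{A'}-\,} & H'\text{-}\mathsf{mod}
\end{array}
\end{equation*}
commutes up to natural isomorphism, where $\Phi=P\otimes_A -$ and $\Psi=P_H\otimes_H -$ are the Morita equivalences. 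Since Morita equivalences preserve length and send simples to simples bijectively, I can conclude that for every simple left $A'$-module $U\cong \Phi(V)$ one has $\ell_{A'}(H'\otimes_{A'}U)=\ell_A(H\otimes_A V)\le 2$, giving $\ell^*(A',H')\le 2$. The Backstr\"om condition is transferred in the same way: under the Morita correspondence of two-sided ideals the radicals match, and the equality $\rad(A)=\rad(H)$ of sub-$A$-bimodules of $H$ becomes the equality $\rad(A')=\rad(H')$ of sub-$A'$-bimodules of $H'$.

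\medskip

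\noindent
\textbf{Main obstacle.} The genuinely non-formal point is the bimodule identification $H'\otimes_{A'} P \cong P\otimes_A H$, which is what allows the length computation and the radical transfer to go through. Once this compatibility is in place, every remaining verification (hereditary, Backstr\"om, length $\le 2$) is purely formal. I therefore expect that the bulk of the argument consists of carefully unpacking the Morita-theoretic naturality of extending an overorder, after which Proposition \ref{modrad} finishes the proof by reducing the nodal condition for $(A',H')$ to the already established one for the induced pair $(\bar A',\bar H')$.
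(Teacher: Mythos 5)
Your argument is essentially correct, and you should be aware that the paper itself gives no proof here --- it only cites \cite[Proposition~1.3]{DrozdZembyk} --- so there is no "paper proof" to compare against line by line. A few observations.

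First, Step~1 is superfluous: the statement already hypothesises that $A'$ is a $\kk$-order, so you need not verify the center, torsion-freeness, or semisimplicity of $Q(A')$. It does no harm, but it is not part of what has to be proved.

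Second, the point you flag as the "genuinely non-formal" obstacle, namely the identification $H'\otimes_{A'}P\cong P\otimes_A H$, is in fact purely formal once you keep track of the standard Morita identities. Writing $B=\End_A(P)$, $P^\vee=\Hom_A(P,A)$, one has $P^\vee\otimes_B P\cong A$, and because $P$ is finitely generated projective over $A$ the natural map gives $H'=\End_H(P_H)\cong P\otimes_A H\otimes_A P^\vee$. Then
\[
H'\otimes_{B}P\;\cong\;\bigl(P\otimes_A H\otimes_A P^\vee\bigr)\otimes_B P\;\cong\;P\otimes_A H\otimes_A\bigl(P^\vee\otimes_B P\bigr)\;\cong\;P\otimes_A H\,=\,P_H,
\]
so the compatibility square you draw commutes for entirely routine reasons. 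The same computation simultaneously gives $\rad(A')=P\otimes_A\rad(A)\otimes_A P^\vee=P\otimes_A\rad(H)\otimes_A P^\vee=\rad(H')$ and transfers the length bound, so in effect Steps~2 and~3 collapse to one clean calculation.

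Third, a minor bookkeeping slip: with $P$ a right $A$-progenerator the endomorphism ring $\End_A(P)$ already acts on the left of $P$, so the Morita partner you want is $\End_A(P)$ itself, not its opposite. This does not affect the argument but would confuse a careful reader.

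Finally, since the paper's standing convention is that all orders are complete and semi-local (hence semiperfect), there is a shorter route: write $A'\cong eM_n(A)e$ for a full idempotent $e\in M_n(A)$ and set $H':=eM_n(H)e$. Then $\rad(A')=eM_n(\rad A)e=eM_n(\rad H)e=\rad(H')$ is immediate, $H'$ is hereditary because $M_n(H)$ is and $e$ is full, and the bound $\ell^*(A',H')\le 2$ follows by applying the exact functor $eM_n(-)$ to $H\otimes_A V$ for each simple $V$. Your bimodule-theoretic argument is more general (it does not need semiperfectness), whereas the idempotent argument is shorter and stays entirely inside matrix rings; both are correct.
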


\section{Generalities on hereditary orders and 
their automorphisms}\label{S:Hereditary}
Let $R$ be a complete discrete valuation ring and $K = Q(R)$ be its quotient field. Next, let  $F$ be a finite-dimensional skew field over $K$ and $O$ be the maximal order in $F$. In fact, $O$ is the integral closure  of $R$ in $F$ (see \cite[Theorem 12.8]{ReinerMO}), so $O$ is actually unique. Moreover, if $L$ is the center of $F$, $O$ contains the integral closure of $R$ in $L$.
Hence, we can suppose that $F$ is central over $K$. Finally, let  $I$ be the Jacobson radical of $O$. We choose  $w \in I$
 such that $I = w O = O w$.

\begin{theorem}\label{T:StandardOrder}
For any $r \in \NN$ and 
$\vec{p} = \bigl(p_1, \dots, p_r\bigr) \in \NN^r$, consider the following order 
\begin{equation}\label{E:standardorder}
H(O, \vec{p}) :=
\left(
\begin{array}{ccc|ccc|c|ccc}
O & \dots & O & I & \dots & I & \dots & I  & \dots &  I \\
\vdots & \ddots & \vdots & \vdots &  & \vdots &  & \vdots & & \vdots \\
O & \dots & O & I & \dots & I & \dots & I & \dots &  I  \\
\hline
O & \dots & O & O & \dots & O & \dots & I & \dots &  I  \\
\vdots &  & \vdots & \vdots  & \ddots & \vdots &  & \vdots & & \vdots \\
O & \dots & O & O & \dots & O & \dots & I & \dots &  I  \\
\hline
\vdots &  & \vdots & \vdots & & \vdots & \ddots & \vdots &   & \vdots\\
\hline
O & \dots & O & O & \dots & O & \dots & O & \dots &  O  \\
\vdots &  & \vdots &  \vdots &  & \vdots &  & \vdots & \ddots & \vdots \\
O & \dots & O & O & \dots & O & \dots & O & \dots &  O  \\
\end{array}
\right) \subset M_{p }(F)
\end{equation}
over $R$, 
where the size
of the $i$-th diagonal block  is $(p_i \times p_i)$ for each $1 \le i \le r$ and $p := |\vec{p}| = p_1 + \dots + p_r$. Then the following statements are true.
\begin{enumerate}
\item[(a)] The order $H(O, \vec{p})$ is hereditary. Its center is $R$ and its rational hull is $M_{p }(F)$.
\item[(b)] Conversely, let $H$ be a hereditary order over $R$ in the simple $K$-algebra $M_{p}(F)$. Then there exist $r \in \NN$ and $\vec{p} \in \NN^r$ such that $|\vec{p}| = p$ and $H \cong H(O, \vec{p})$. More precisely, there exists $x \in \GL_{p}(F)$ such that
$H = \mathsf{Ad}_x\bigl(H(O, \vec{p})\bigr) = x \cdot H(O, \vec{p})\cdot x^{-1}$.
\item[(c)] Two orders $H(O, \vec{p})$ and $H(O, \vec{p}')$ are isomorphic if and only if $\vec{p}'$ can be transformed into $\vec{p}$ by a cyclic permutation. 
\end{enumerate} 
\end{theorem}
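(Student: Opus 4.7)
The strategy is the standard three-step approach: (a) direct verification of hereditariness for the block form, (b) reconstruction of an arbitrary hereditary order from its lattice chain, (c) isomorphism classification via combinatorics of this chain.

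\smallskip
\noindent
\textbf{Part (a).} The order $H = H(O, \vec p)$ is visibly a finitely generated torsion-free $R$-submodule of $M_p(F)$, closed under multiplication, and becomes all of $M_p(F)$ after inverting the uniformizer $w$, so its rational hull is $M_p(F)$ and its center is $R$. I would compute $J := \rad(H)$ explicitly: $J$ has the same off-diagonal block pattern as $H$, but with the $r$ diagonal $O$-blocks replaced by $I$-blocks. One then observes that $J$ is an invertible two-sided ideal of $H$, for instance by producing a block-shift matrix $s \in \GL_p(F)$ such that $J = sH = Hs$ and verifying $J^r = wH$. Since an order over a complete DVR is hereditary iff its Jacobson radical is an invertible ideal (equivalently, a projective one-sided module), this establishes (a).

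\smallskip
\noindent
\textbf{Part (b).} Fix a hereditary order $H \subset M_p(F)$ and let $V = F^p$ be the standard simple $M_p(F)$-module. I would parametrize $H$ by its lattice chain in $V$. Finitely generated $H$-submodules of $V$ which are full $O$-lattices correspond (via $L \mapsto \End_H(L)$) to the indecomposable projective left $H$-modules inside $V$; since $H$ is hereditary and semiperfect over the complete DVR $R$, there are finitely many such $L$ up to scaling by $w$, and using that $J$ acts on the set of indecomposable projectives by a cyclic shift (this is where hereditariness is crucially used), these lattices can be arranged in a single cyclic chain
\[
L_1 \supsetneq L_2 \supsetneq \dots \supsetneq L_r \supsetneq L_{r+1} = wL_1.
\]
Each $L_i$ is a free $O$-module of rank $p$, so one can choose an $O$-basis of $L_1$ compatible with the filtration $L_1 \supset L_2 \supset \dots \supset wL_1$, contributing $p_i := \dim_{O/I}(L_i/L_{i+1})$ basis vectors to each step, with $p_1 + \dots + p_r = p$. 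With respect to this basis, a matrix $a \in M_p(F)$ lies in $H$ precisely when $a L_i \subseteq L_i$ for every $i$, which translates verbatim into the block condition defining $H(O, \vec p)$. The change-of-basis matrix $x \in \GL_p(F)$ then realizes the desired conjugation $H = \Ad_x\bigl(H(O,\vec p)\bigr)$.

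\smallskip
\noindent
\textbf{Part (c) and main obstacle.} A cyclic permutation of $\vec p$ is manifestly induced by conjugation with a suitable block-permutation matrix that simultaneously reorders rows and columns, so it yields an isomorphism of orders. Conversely, any isomorphism $H(O, \vec p) \xrightarrow{\sim} H(O, \vec p')$ induces a Morita equivalence of module categories, hence a bijection on isomorphism classes of indecomposable projectives; it must also intertwine the cyclic shift induced by the radical $J$ and preserve the dimensions $p_i$ of the simple quotients $L_i/L_{i+1}$. This forces $\vec p'$ to be a cyclic permutation of $\vec p$. The main obstacle is the lattice-chain step inside (b): I need to show that the indecomposable projective lattices really form a \emph{single} cyclic chain under $L \mapsto wL$, rather than several parallel chains. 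This is the heart of Harada's argument and amounts to showing that multiplication by $J$ acts transitively on the set of isomorphism classes of indecomposable projective $H$-modules, which is where hereditariness of $H$ is essential and where the completeness of $R$ enters to guarantee the existence of a full system of orthogonal primitive idempotents lifting those of $H/J$.
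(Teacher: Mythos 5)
The paper does not prove Theorem \ref{T:StandardOrder}; it simply cites Harada's original papers \cite{Harada,Harada2} and the textbook treatment in \cite[Theorem 39.14 and Corollary 39.24]{ReinerMO}. Your outline reproduces the standard lattice-chain argument from precisely those sources, so it is the ``same route'' in the relevant sense, and the structure is correct: hereditariness from invertibility of the radical, reconstruction from the chain of full $H$-lattices in $V=F^p$, and the cyclic-permutation combinatorics for part~(c).

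One slip worth correcting: in part (b) the parenthetical ``(via $L \mapsto \End_H(L)$)'' is not the correspondence you want. For a hereditary order $H$, the full left $H$-lattices $L \subset V$ simply \emph{are} the indecomposable projective left $H$-modules --- each such $L$ is projective because $H$ is hereditary (a submodule of the projective module $V$), and indecomposable because $K\cdot L = V$ is simple over the rational hull. By contrast, $\End_H(L)$ is an $R$-order in $F^\circ$ containing $O^\circ$, hence equal to $O^\circ$ for \emph{every} lattice $L$; it carries no information about $L$. Aside from that, you correctly identify the genuine crux --- that these lattices form a \emph{single} descending chain, cyclic under the shift $L \mapsto JL$ with $J^r = wH$, rather than several parallel chains --- as the nontrivial input from Harada's argument, which rests on $H$ being hereditary and on the rational hull being \emph{simple}.
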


\noindent
\emph{Proofs} of these results are due to Harada \cite{Harada, Harada2}; see also 
 \cite[Theorem 39.14 and Corollary 39.24]{ReinerMO} for a textbook treatment.

\begin{remark} It is easy to see that the following properties of $H = H(O, \vec{p})$ are true. 
\begin{enumerate}
\item[(a)] Let $x = \left(\begin{array}{ccc} x_{11} & \dots & x_{1r} \\
\vdots & \ddots & \vdots \\
x_{r1} &\dots & x_{rr}
 \end{array}\right) \in H$, where $x_{ij}$ is a matrix of size $(p_i \times p_j)$ for any $1 \le i, j \le r$. Then $x \in H^\ast$ if and only if  $x_{ii} \in \mathsf{GL}_{p_i}(O)$ for all $1 \le i \le r$.   Similarly, $x \in \rad(H)$ if and only if 
 $x_{ii} \in M_{p_i}(I)$ for all $1 \le i \le r$. 
 \item[(b)] Let $\bar{O} = O/I$ be the residue skew field  of $O$. Then we have:
 $$
 \bar{H} = H/\rad(H) \cong M_{p_1}(\bar{O}) \times \dots \times M_{p_r}(\bar{O}).
 $$
 Moreover, the canonical map $H^\ast \lar \bar{H}^\ast$ is surjective. 
 \item[(c)] Let $y = \left(\begin{array}{ccc} y_{11} & \dots & y_{1r} \\
\vdots & \ddots & \vdots \\
y_{r1} &\dots & y_{rr}
 \end{array}\right) \in H$ and $\bar{y} = \left(\bar{y}_{11}, \dots, \bar{y}_{rr}\right)$ be its image in $\bar{H}$. Then for any $x \in H^\ast$ we have:
\begin{equation}\label{E:type1}
 \overline{\Ad_x(y)} = \bigr({\Ad_{\bar{x}_{11}}(\bar{y}_{11})}, \dots,  {\Ad_{\bar{x}_{rr}}(\bar{y}_{rr})}\bigr)
\end{equation}
and  any inner automorphism of $\bar{H}$ is induced by an inner automorphism of $H$. 
 \item[(d)] For any automorphism $\psi \in \Aut(O)$,  let $\bar\psi \in \Aut(\bar{O})$ be the induced automorphism. Since $\psi(I) = I$, we  obtain (abusing the notation) induced automorphisms $\psi \in \Aut(H)$ and $\bar\psi \in \Aut(\bar{H})$. Concretely, for  any $x \in H$ we have:
\begin{equation}\label{E:type2}
 \overline{\psi(x)} = \bigl(\bar\psi(\bar{x}_{11}), \dots,  
 \bar\psi(\bar{x}_{rr})\bigr).
\end{equation}
 \item[(e)] Let $1 \le t \le r$ be minimal with the property that 
$$
\vec{p} = \bigl(p_1, \dots, p_t; p_1, \dots, p_t;  \dots; p_1, \dots, p_t\bigr).
$$
For  $q = p_1 + \dots + p_t$ we put
\begin{equation}\label{E:Rotation}
\varrho = \varrho_{\vec{p}} = 
\left(
\begin{array}{ccccc}
0 & 0 & \dots & 0 & w \mathbbm{1}_q \\
\mathbbm{1}_q  & 0 & \dots & 0 & 0 \\
0 & \mathbbm{1}_q  & \dots & 0 & 0 \\
\vdots & \vdots & \ddots & \vdots & \vdots \\
0 & 0 & \dots & \mathbbm{1}_q  & 0
\end{array}
\right) \in M_p(F),
\end{equation}
where all blocks of $\varrho$ are square matrices of size $q$. Note that 
$p = l q$ and $r = lt$ for some $l \in \NN$.
Moreover,  $\varrho \in H \setminus H^\ast$ and $\Ad_\varrho(H) = H$. Since $\varrho^l = w \mathbbm{1}_p$, we conclude that 
$
\Ad_\varrho^l(x) = w x w^{-1}$ for any $x \in H$.

Let $\kappa \in \Aut(\bar{O})$ be the automorphism induced by $\Ad_w \in \Aut(O)$. 
Then the automorphism   
$\overline{\Ad}_\varrho \in \Aut(\bar{H})$ induced by $\Ad_\varrho$ is given by a twisted cyclic shift
\begin{equation}\label{E:type3}
(z_1, \dots, z_t, , \dots, z_{r-t+1}, \dots,  z_r) \mapsto 
\bigl(\kappa(z_{r-t+1}), \dots,  \kappa(z_r), z_1, \dots, z_t,  \dots\bigr).
\end{equation}
\end{enumerate}
\end{remark}

The goal of this section is to describe the group $\Aut(H)$ of all ring automorphisms  of the hereditary order $H = H(O, \vec{p})$. It contains a normal subgroup 
$\Aut_R(H)$ of all $R$-linear automorphisms of $H$ (i.e.~those ring automorphisms of $H$ which act trivially on its center). 
Presumably, many results of this section are well-known to experts. However, we were not able to find a reference for the key Corollary \ref{C:SummaryAutomorphisms}
below  and therefore state all preparatory results with detailed proofs.

Let $\Lambda = M_n(F)$ for some $n\in \NN$. For any  
$\phi \in \Aut(F)$, let $\widehat\phi \in \Aut(\Lambda)$ be the induced  automorphism.

\begin{lemma}\label{L:Prep1} For any  $\varphi \in \Aut(\Lambda)$ there exist $a \in \Lambda^\ast$ and $\phi \in \Aut(F)$ such that $\varphi = \Ad_a \cdot \widehat\phi$. 
\end{lemma}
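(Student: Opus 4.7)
My plan is a twisted-module argument in the spirit of Skolem--Noether, adapted to this setting where $\Lambda = M_n(F)$ is simple but $\varphi$ need not act trivially on the center $K = Z(F)$. The point is that the natural simple left $\Lambda$-module, together with its $\varphi$-twist, packages all the data needed to produce both $\phi$ and $a$.

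First, I introduce $V = F^n$ (column vectors), the unique simple left $\Lambda$-module. Its $\varphi$-twist $V^\varphi$ has the same underlying abelian group with action $m \cdot_\varphi v := \varphi(m)\, v$; since $\varphi$ is a ring bijection, the $\Lambda$-submodules of $V^\varphi$ coincide with those of $V$, so $V^\varphi$ is simple. Uniqueness of the simple $\Lambda$-module then yields an additive bijection $T\colon V \to V$ satisfying
\begin{equation*}
T(m v) = \varphi(m)\, T(v) \qquad (m \in \Lambda,\; v \in V).
\end{equation*}

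Next, I extract an automorphism $\phi \in \Aut(F)$ from $T$. Using the standard identification $\End_\Lambda(V) \cong F$ (right scalar multiplication on column vectors), for each $f \in F$ the map $v \mapsto T(v f)$ is again $\Lambda$-linear from $V$ to $V^\varphi$, so there is a unique $\phi(f) \in F$ with $T(v f) = T(v)\, \phi(f)$. Additivity and multiplicativity of $\phi$ follow immediately from those properties for $T$ together with associativity of the $F$-action; injectivity of $\phi$ follows from bijectivity of $T$; applying the same construction to $\varphi^{-1}$ and $T^{-1}$ produces a two-sided inverse for $\phi$, so $\phi \in \Aut(F)$.

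Finally, I assemble the matrix $a$. Writing $T(e_i) = \sum_j a_{ji}\, e_j$ and setting $a := (a_{ij}) \in M_n(F)$, the semilinearity $T(v f) = T(v)\, \phi(f)$ gives $T(v) = a\, \phi_n(v)$, where $\phi_n \colon V \to V$ applies $\phi$ coordinate-wise. The elementary identity $\phi_n(m v) = \widehat\phi(m)\, \phi_n(v)$ combined with $T(m v) = \varphi(m)\, T(v)$ yields $a\, \widehat\phi(m)\, \phi_n(v) = \varphi(m)\, a\, \phi_n(v)$ for every $v$, whence $a\, \widehat\phi(m) = \varphi(m)\, a$ and $\varphi = \Ad_a \cdot \widehat\phi$. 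Invertibility of $a$ follows from bijectivity of $T$ and of $\phi_n$. The one delicate point is verifying that $\phi$ is genuinely an automorphism rather than merely an endomorphism of $F$; apart from this, the argument is formal bookkeeping of the right-$F$-action through the twisted isomorphism $T$.
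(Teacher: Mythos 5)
Your argument is correct, and it follows the same initial strategy as the paper — introduce the twisted simple module and use uniqueness of the simple module to get an additive bijection $T$ intertwining $\varphi$, then read off $\phi\in\Aut(F)$ from the action of $T$ on $\End_\Lambda(V)\cong F^\circ$ — but it diverges at the final step. The paper checks that $\varphi$ and $\widehat\phi$ agree on the center $K$ and then invokes the Skolem--Noether theorem to produce $a\in\Lambda^\ast$ with $\varphi\widehat\phi^{-1}=\Ad_a$. You instead construct $a$ explicitly from $T$: writing $T(e_i)=\sum_j a_{ji}e_j$, the right-semilinearity $T(vf)=T(v)\phi(f)$ gives $T=\lambda_a\circ\phi_n$, and the intertwining relation then forces $a\,\widehat\phi(m)=\varphi(m)\,a$, with invertibility of $a$ coming for free from bijectivity of $T$ and $\phi_n$. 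This makes the proof self-contained (in effect you reprove the relevant case of Skolem--Noether rather than citing it), at the price of a somewhat more computational finish; the paper's version is shorter given that Skolem--Noether is already available and used elsewhere. One point you correctly flag and correctly settle is that $\phi$ must be checked to be an automorphism and not merely a ring endomorphism; your argument via $T^{-1}$ and $\varphi^{-1}$ handles this, and it matches what the paper gets for free from $\Ad_f$ being a ring isomorphism between endomorphism rings.
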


\begin{proof} Let $S = F^n = \left(
\begin{array}{c}
F \\
\vdots \\
F
\end{array}
 \right)
 $ be the simple left $\Lambda$-module. For any $a \in \Lambda$ we have the corresponding left multiplication map $S \stackrel{\lambda_a}\lar S, \vec{v} \mapsto a \cdot \vec{v}$. 
 Analogously, for any $b \in F$ we get left 
  and right multiplication maps
 $$
 S \stackrel{\lambda_b}\lar S, \vec{v} \mapsto b \vec{v} \quad \mbox{\rm and} \quad 
  S \stackrel{\rho_b}\lar S, \vec{v} \mapsto\vec{v}  b. 
 $$
 Note that $F^\circ \stackrel{\rho}\lar \End_{\Lambda}(S), b \mapsto \rho_{b}$ is a ring isomorphism. 

Let $\varphi \in \Aut(\Lambda)$ be any automorphism. Then we get another simple left $\Lambda$-module $S^\sharp = F^n$, where the left action of $\Lambda$ on $F^n$ is given by the formula $a \ast \vec{v} = \varphi(a) \cdot \vec{v}$ for any $a \in \Lambda$ and $\vec{v} \in F^n$. Since a simple $\Lambda$-module is unique up to  isomorphisms, there exists an automorphism of the abelian group $S \stackrel{f}\lar S$ such that 
\begin{equation}\label{E:Fact1}
f  \lambda_a = \lambda_{\varphi(a)} f \quad \mbox{\rm for all}\;  a \in \Lambda.\end{equation} In other words, $S \stackrel{f}\lar S^\sharp$ is an isomorphism of $\Lambda$-modules and we get an induced ring isomorphism
$$
\End_{\Lambda}(S) \stackrel{\Ad_f}\lar \End_{\Lambda}(S^\sharp), \; g \mapsto  f g f^{-1}.
$$
As a consequence, we get a ring automorphism $F \stackrel{\phi}\lar F$ as well as the restricted automorphism $K \stackrel{\phi}\lar K$ such that the following diagram of rings and ring homomorphisms
\begin{equation}\label{E:Routine}
\begin{array}{c}
\xymatrix{
K  \ar[d]_-{{\phi}} \ar@{^{(}->}[r] &  F^\circ \ar[r]^-{\rho}  \ar[d]^-{{\phi}} & 
\End_{\Lambda}(S) \ar[d]^-{\Ad_f}\\
K   \ar@{^{(}->}[r] &  F^\circ \ar[r]^-{\rho}  & 
\End_{\Lambda}(S^\sharp) 
}
\end{array}
\end{equation}
is commutative. It follows that for any $b \in F$ we have: $f \rho_b = \rho_{\phi(b)} f$. Suppose now that $b \in K$. Then we have: $\rho_b = \lambda_b$, $\rho_{\phi(b)} = \lambda_{\phi(b)}$. Moreover,  (\ref{E:Fact1}) and (\ref{E:Routine}) imply that 
$$
\lambda_{\varphi(b)} f = f \lambda_b = \lambda_{\phi(b)} f.
$$
Since the maps $f$ and $\lambda$  are bijective, we conclude that $\varphi(b) = \phi(b)$ for all $b \in K$ (as usual, we identify an element of $K$ with the corresponding scalar matrix in $\Lambda$). Now we extend $\phi$ to a ring automorphism $\Lambda \stackrel{\widehat\phi}\lar \Lambda$. It follows that the composition $\Lambda \xrightarrow{\varphi \widehat{\phi}^{-1}} \Lambda$ acts trivially on $K$. By the Skolem--Noether theorem (see e.g.~\cite[Theorem  4.4.1]{DrozdKirichenko}) there exists $a \in \Lambda^\ast$ such that $\varphi \widehat{\phi}^{-1} = \Ad_a$, implying the statement. 
\end{proof}

\begin{lemma}\label{L:Prep2}
In the notation as above we have: $\Aut(F) \cong  \Aut(O)$.
\end{lemma}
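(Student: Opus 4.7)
The plan is to construct mutually inverse maps $\Aut(F) \rightleftarrows \Aut(O)$, given by restriction in one direction and by localization in the other.

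In the restriction direction, given $\phi \in \Aut(F)$, $\phi$ preserves the center, so $\phi|_K \in \Aut(K)$. The crucial point is that $\phi(R) = R$, which reduces to an intrinsic characterization of the valuation ring inside the complete discretely valued field $K$: for example, in many cases $R^\ast = \bigcap_{n \ge 1}(K^\ast)^n$, and more generally $R$ may be recovered as the unique maximal subring of $K$ that is a discrete valuation ring with quotient field $K$ and in which $K$ is complete. Granting $\phi(R) = R$, the fact that $O$ is the integral closure of $R$ in $F$ together with the ring-theoretic nature of integrality yields $\phi(O) = O$; hence restriction gives a well-defined homomorphism $\mathsf{res} \colon \Aut(F) \to \Aut(O)$.

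In the extension direction, any $\psi \in \Aut(O)$ automatically preserves the center $R = Z(O) = O \cap K$, inducing $\psi|_R \in \Aut(R)$, which extends canonically to $\psi|_K \in \Aut(K)$. Since $F = K \otimes_R O$ (equivalently, the Ore localization of $O$ at the powers of the normal uniformizer $w$), the pair $(\psi|_K, \psi)$ induces a ring automorphism $\tilde\psi \colon F \to F$ by the universal property of Ore localization; applying the same construction to $\psi^{-1}$ exhibits an inverse, so $\tilde\psi \in \Aut(F)$. This defines $\mathsf{ext} \colon \Aut(O) \to \Aut(F)$.

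To see that the two maps are mutually inverse: $\mathsf{res} \circ \mathsf{ext} = \id$ holds by construction, since the extension restricts back to $\psi$ on $O$. Conversely, $\mathsf{ext} \circ \mathsf{res} = \id$ because any ring endomorphism of $F$ is determined by its restriction to $O$ via the uniqueness clause of the universal property. The principal technical obstacle is the very first step --- establishing $\phi(R) = R$ for every $\phi \in \Aut(F)$ --- which amounts to the standard but subtle fact that the valuation ring of a complete discretely valued field is intrinsic to its ring structure.
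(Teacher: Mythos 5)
Your overall architecture — restriction and extension are mutually inverse, with the crux being that any automorphism of $F$ fixes the valuation ring $R \subseteq K$ setwise — is exactly the paper's approach. The paper disposes of that crux by citing \cite[Theorem 4.4.1]{EnglerPrestel} (intrinsicness of the valuation ring in a Henselian valued field) and then invokes the fact that $O$ is the integral closure of $R$ in $F$, just as you do.

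However, your first suggested intrinsic characterization of $R$ is wrong in precisely the case this paper cares about. For $K = \RR\llbrace t\rrbrace$ one has $-1 \in R^\ast$, yet $-1$ is not a square in $K$ (every square $f^2$ with $f = t^n g$, $g(0)\neq 0$, has leading coefficient $g(0)^2 > 0$), so $-1 \notin \bigcap_{n\ge 1}(K^\ast)^n$; the asserted identity $R^\ast = \bigcap_{n\ge 1}(K^\ast)^n$ fails. The containment $\bigcap_n (K^\ast)^n \subseteq R^\ast$ is fine (divisibility of the valuation forces valuation zero), but the reverse inclusion is false in residue characteristic zero. Your fallback ("$R$ is the unique maximal subring which is a complete DVR with fraction field $K$") is the correct kind of statement, but you leave it unproven, and this is exactly the step the paper delegates to Engler--Prestel; since you flag it as the "principal technical obstacle" rather than supply an argument or reference, this remains a genuine gap. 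The rest (integral closure preserved under ring automorphisms once $\phi(R)=R$; extension via localization; the two maps inverse to each other) is correct and matches the paper.
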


\begin{proof} 
It is clear that any automorphism of $O$ induces an automorphism of its rational hull $F$, giving an injective map $\Aut(O) \longrightarrow \Aut(F)$. 
Conversely, since $K$ is the center of $F$, any element $\psi \in \Aut(F)$ restricts to  an automorphism $K \stackrel{\psi}\lar K$. It follows from \cite[Theorem 4.4.1]{EnglerPrestel} that $\psi(D) = D$. Since $O$ is the integral closure of $D$ in $F$ (see \cite[Theorem 12.8]{ReinerMO}), we conclude that $\psi(O) = O$. As a consequence, the map $\Aut(O) \longrightarrow \Aut(F)$ is surjective, hence bijective, as asserted. 
\end{proof}

From now on, let  $H = H(O, \vec{p})$ be a standard hereditary order given by (\ref{E:standardorder}). It is clear that any $\psi \in \Aut(O)$ induces a ring automorphism $\widetilde{\psi} \in \Aut(H)$, where $\widetilde{\psi}$ acts on the entries of the  elements of $H$ as $\psi$. 

\begin{lemma} For any $\vartheta \in \Aut(H)$ there exists $\psi \in \Aut(O)$ such  $\vartheta \widetilde{\psi}^{-1} \in \Aut_R(H)$. 
\end{lemma}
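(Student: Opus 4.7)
The plan is to extend $\vartheta$ to an automorphism $\varphi$ of the rational hull $\Lambda := M_p(F)$, invoke Lemma \ref{L:Prep1} to write $\varphi = \Ad_a \cdot \widehat\phi$, and then argue via Lemma \ref{L:Prep2} that the ``field automorphism'' factor $\widehat\phi$ actually comes from a ring automorphism $\widetilde\psi \in \Aut(H)$.

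First I would check that any $\vartheta \in \Aut(H)$ extends uniquely to a ring automorphism of $\Lambda$. Since $\vartheta$ preserves the center, $\vartheta(R) = R$ and $\vartheta$ restricts to a ring automorphism of $R$, which extends uniquely to $K = Q(R)$. Because $\Lambda = K \otimes_R H$ is obtained from $H$ by inverting a uniformizer $t \in R$ (and $\vartheta(t)$ is again a uniformizer up to a unit), $\vartheta$ admits a unique extension $\varphi \in \Aut(\Lambda)$ whose restriction to $K$ agrees with the extension of $\vartheta|_R$.

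Next, by Lemma \ref{L:Prep1} there exist $a \in \Lambda^\ast$ and $\phi \in \Aut(F)$ such that $\varphi = \Ad_a \cdot \widehat\phi$. By Lemma \ref{L:Prep2}, $\phi$ restricts to an automorphism $\psi \in \Aut(O)$; necessarily $\psi(I) = I$, since $I$ is the Jacobson radical of $O$. Applying $\psi$ entrywise then preserves the block pattern of \eqref{E:standardorder}, so it defines a ring automorphism $\widetilde\psi \in \Aut(H)$ whose canonical extension to $\Lambda$ is exactly $\widehat\phi$.

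Finally, consider $\vartheta' := \vartheta \widetilde\psi^{-1} \in \Aut(H)$. Its extension to $\Lambda$ equals $\varphi \widehat\phi^{-1} = \Ad_a$, which acts trivially on the center $K = Z(\Lambda)$. Restricting, $\vartheta'$ acts trivially on $R = Z(H)$, so $\vartheta' \in \Aut_R(H)$, as required. The one point that needs genuine verification (rather than routine checking) is the unique extension of $\vartheta$ to $\Lambda$; everything else reduces to bookkeeping between $O$, $F$, $H$, and $\Lambda$ using the two preparatory lemmas.
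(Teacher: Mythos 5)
Your proof takes essentially the same route as the paper: extend $\vartheta$ to the rational hull $\Lambda$, decompose the extension as $\Ad_a \cdot \widehat\phi$ via Lemma \ref{L:Prep1}, descend $\phi$ to $\psi \in \Aut(O)$ via Lemma \ref{L:Prep2}, and observe that $\vartheta\widetilde\psi^{-1}$ extends to $\Ad_a$, which is $K$-linear. Your additional remark justifying the (unique) extension of $\vartheta$ to $\Lambda$ is a sensible gloss on a step the paper treats as automatic, and the argument is correct.
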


\begin{proof} Let $\Lambda = M_p(F)$ be the rational hull of $H$ and $\widehat\vartheta \in \Aut(\Lambda)$ be the ring automorphism induced by $\vartheta$.  According to Lemma \ref{L:Prep1} and Lemma \ref{L:Prep2} there exists $\psi \in \Aut(O)$ such that 
$\widehat\vartheta \widehat\psi^{-1} \in \Aut_K(\Lambda)$, where 
$\widehat{\psi} \in \Aut(\Lambda)$ is the automorphism induced by $\psi$. Consider the automorphism $\vartheta \widetilde{\psi}^{-1} \in \Aut(H)$.
The corresponding induced morphism of the rational hull $\Lambda$ is $K$-linear, since it is equal to $\widehat\vartheta \widehat\psi^{-1}$. Hence,
$\vartheta \widetilde{\psi}^{-1} \in \Aut_R(H)$, as asserted. 
\end{proof}

Let $\varphi \in \Aut_R(H)$. The Skolem--Noether theorem implies that there exists $x \in \Lambda^\ast$ such that $\varphi(a) =  \Ad_x(a) = x a x^{-1}$ for all $a \in H$. 

\begin{definition}
Let $N_H = \bigl\{x \in \Lambda^\ast \, \big| \, \Ad_x(H) = H\bigr\}$ be the normalizer group of the order $H$. We say that $x, x' \in N_H$ are \emph{equivalent} if there exist $g, h \in H^\ast$ such that $x' = g x h$. 
\end{definition}

\noindent
Our goal is to describe the group $N_H$ explicitly.

\begin{theorem}\label{T:HereditaryAutom}
For any $x \in N_H$ there exist $d \in \ZZ$ and $0 \le k < l$ such that $x$ is equivalent to the matrix $w^{-d} \varrho^k$, where $w$ is a chosen generator of $I$ and $\varrho$ is given by (\ref{E:Rotation}).
\end{theorem}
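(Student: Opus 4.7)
The plan is to show every $x \in N_H$ is equivalent to $\varrho^m$ for some $m \in \ZZ$; writing $m = -dl + k$ with $0 \le k < l$ and using $\varrho^l = w\mathbbm{1}_p$ then gives $\varrho^m = w^{-d}\varrho^k$, as required. I will proceed in three steps.

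\emph{Step 1 (reduction to trivial permutation).} The automorphism $\Ad_x$ preserves $J = \rad H$ and so induces an automorphism $\overline{\Ad}_x$ of $\bar H = \prod_{i=1}^r M_{p_i}(\bar O)$, permuting the simple factors by some $\sigma \in \mathsf{Sym}_r$. A direct inspection shows that $J/J^2$ decomposes as a sum of $r$ non-zero one-dimensional $\bar H$-bimodule pieces $\bar e_{i+1}(J/J^2)\bar e_i$ (indices modulo $r$, up to the chosen orientation) forming a directed $r$-cycle---the Gabriel quiver of $H$---whose automorphism group is cyclic of order $r$; hence $\sigma$ is a cyclic rotation. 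Since $\sigma$ also preserves $\vec p$, it is a shift by $jt$ for some $0 \le j < l$, where $t$ is the period of $\vec p$. By formula (\ref{E:type3}), $\overline{\Ad}_{\varrho^j}$ realizes exactly this shift, so $x' := x\varrho^{-j} \in N_H$ induces the trivial permutation on factors.

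\emph{Step 2 (diagonalization).} Since $\overline{\Ad}_{x'}$ preserves each factor $M_{p_i}(\bar O)$ setwise, it fixes the identity $\bar e_i$ of each factor; hence $\Ad_{x'}(e_i) \equiv e_i \pmod J$ for every $i$. By the standard uniqueness lemma for lifts of complete systems of orthogonal idempotents (two lifts to $H$ of the same system in $\bar H$ are conjugate by an element of $1 + J \subseteq H^*$), there exists $h \in H^*$ such that $\Ad_h(e_i) = \Ad_{x'}(e_i)$ for all $i$; equivalently, $z := h^{-1}x' \in N_H$ commutes with every $e_i$, so $z = \mathrm{diag}(z_1, \ldots, z_r)$ is block-diagonal with $z_i \in \GL_{p_i}(F)$.

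\emph{Step 3 (analysis of $z$ and conclusion).} Restricting $\Ad_z(H) = H$ to the diagonal blocks gives $z_i \in N_{\GL_{p_i}(F)}(M_{p_i}(O)) = F^* \cdot \GL_{p_i}(O)$, so $z_i = f_i g_i$ with $f_i \in F^*,\ g_i \in \GL_{p_i}(O)$; restriction to the off-diagonal blocks $e_j H e_i$ (which are $M_{p_j \times p_i}(O)$ for $j > i$ and $M_{p_j \times p_i}(I)$ for $j < i$) translates, via the normalized valuation $v$ on $F$ with $v(w) = 1$, into $v(f_i) = v(f_j)$ for all $i,j$. Setting $d := v(f_i)$ and writing $f_i = u_i w^d$ with $u_i \in O^*$, the identity $u_i w^d = w^d(w^{-d}u_i w^d)$ together with $\Ad_w(O) = O$ (from uniqueness of the maximal order $O \subset F$) yields $z = w^d\mathbbm{1}_p \cdot h''$ for some $h'' \in H^*$, hence $z = \varrho^{dl}h''$ since $w\mathbbm{1}_p = \varrho^l$. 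Unwinding,
\[
x = h\,z\,\varrho^j = h\,\varrho^{dl} h''\,\varrho^j = h\,\varrho^{dl+j}\,\Ad_{\varrho^{-j}}(h''),
\]
with $\Ad_{\varrho^{-j}}(h'') \in H^*$ since $\Ad_\varrho(H^*) = H^*$. Thus $x$ is equivalent to $\varrho^{dl+j} = w^d\varrho^j$, i.e.\ to $w^{-d}\varrho^k$ (after relabelling $d \mapsto -d$) with $k := j \in \{0, \ldots, l-1\}$. The main technical subtlety throughout will be in Step 3, where one must carefully exploit $\Ad_w(O) = O$ to manage the non-commutativity of the uniformizer $w$; Step 1 also depends on the precise bimodule structure of $J/J^2$ to pin down a cyclic (rather than merely permutative) rotation.
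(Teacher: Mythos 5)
Your proof is correct but takes a genuinely different route from the paper's. The paper first reduces to a minimal hereditary suborder $H_\circ = H(O,\vec{e})$ with $\vec{e} = (1,\dots,1)$ (invoking Harada's result that any two minimal hereditary suborders are conjugate by an element of $H^\ast$), then uses elementary row/column operations over $H_\circ^\ast$ to reduce $x$ to a monomial matrix, and finally analyzes the descending chain of $H_\circ$-lattices $U_0 \supset U_1 \supset \dots \supset U_p = w^{-1}U_0$ to pin down the exact monomial shape and force $c = p_1 + \dots + p_{tk}$. Your argument instead works structurally modulo the radical: the Gabriel-quiver observation in Step 1 constrains the induced permutation of simple factors of $\bar H$ to a cyclic rotation preserving $\vec p$, lifting of orthogonal idempotents in Step 2 reduces to the block-diagonal case, and the computation $N_{\GL_{p_i}(F)}\bigl(M_{p_i}(O)\bigr) = \langle w\rangle\cdot \GL_{p_i}(O)$ together with the off-diagonal constraints in Step 3 closes the argument. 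Your route avoids lattice chains and elementary operations entirely and reads as more conceptual; the paper's route is more explicit and produces the monomial normal form directly. Both are complete. One small point worth attending to: the claim that the Gabriel quiver of $H(O,\vec p)$ is a single directed $r$-cycle (i.e.\ that $J/J^2$ is concentrated in the blocks $e_{i+1}(J/J^2)e_i$ for $i$ mod $r$, each nonzero and simple as a bimodule) is a standard fact for hereditary orders over a complete discrete valuation ring, but as used here it carries the full weight of Step 1 and deserves either a citation or a short direct verification from the description of $J$; the computation is routine but not immediate.
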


\begin{proof} Let $\vec{e} = (1, \dots, 1) \in \NN^r$ and $H_\circ = H_\circ(O, \vec{e})$ be the corresponding hereditary order. Then $H_\circ$ and $\Ad_x(H_\circ)$ are two minimal hereditary suborders of $H$. According to \cite[Section 3]{Harada2}, there exists $y \in H^\ast$ such that 
$\Ad_{yx}(H_\circ) = \Ad_y\bigl(\Ad_x(H_\circ)\bigr) = H_\circ.$
Hence, replacing $x$ by an equivalent element $yx$ we may without loss of generality assume that $\Ad_{x}(H_\circ) = H_\circ$. 
Our next goal is to describe a normal form of an element of $N_{H_\circ}$ up to an equivalence relation. 

\smallskip
\noindent
\underline{Claim 1}. For any $x \in \Lambda^\ast$ there exist $y, z \in H_\circ^\ast$ such that $yxz$ is a monomial matrix, i.e.~a matrix having precisely  one non-zero entry in each row and column, which is  moreover a power of the element $w$.

\smallskip
\noindent
\underline{Proof}. Any  transformation of a matrix $x \in \Lambda^\ast$ into an equivalent one  is given by a sequence of elementary transformations of rows and columns of the following form.
\begin{enumerate}
\item For any $1 \le i < j \le p$, $a \in O$ and $b \in I$ we add
\begin{enumerate}
\item the $a$-multiple of the $i$-th row  to the $j$-th row,
\item the $a$-multiple of the $j$-th column to the $i$-th column,
\item the $b$-multiple of the $j$-th row to the $i$-th row, 
\item the $b$-multiple of the $i$-th column to the $j$-th column.
\end{enumerate}
\item For any $1 \le i \le p$ and $a \in O^\ast$ one can multiply the $i$-th row  by $a$, similarly the $i$-th column.
\end{enumerate}
For any $a \in F^\ast$ there exist uniquely determined  $d = \upsilon(a) \in \ZZ$ (the order of $a$) and $u \in O^\ast$ such that $a = u \pi^d$ (whereas 
$\upsilon(0) = \infty$).

For $1 \le i, j \le p$ let $x_{ij} \in O$ be the corresponding entry of $x$. Let $(i, j)$ be such that  $\upsilon(x_{ij})$ is the smallest possible and additionally $\upsilon(x_{ij}) < \upsilon(x_{ij'})$ and $\upsilon(x_{ij}) < \upsilon(x_{i'j})$ for any $i' < i$ and $j' > j$. Using the above transformations, one can eliminate all elements in the $i$-th row and $j$-th column but $x_{ij}$ and transform $x_{ij}$ to $\pi^d$, where $d = \upsilon(x_{ij})$. Proceeding recursively, we get the statement. \qed

Recall that $U \subset F^p$ is  a $H_\circ$-\emph{lattice} if  $H_\circ U = U$, $U$ has finite rank over $R$  and $K \cdot U = F^p$. 

\smallskip
\noindent
\underline{Claim 2}. Let $U$ be an $H_\circ$-lattice  in $F^p$ and $x \in N_{H_\circ}$. Then $x U$ is also an $H_\circ$-lattice. Moreover, if $U$ and $U'$ are  $H_\circ$-lattices in $F^p$ then $U \subseteq U'$ if and only if $x U \subseteq x U'$. 

\smallskip
\noindent
\underline{Proof}. Since $H_\circ \cdot x U  = x H_\circ x^{-1} \cdot x U = x H_\circ  U = x U$, the statement follows. \qed 

\smallskip
\noindent
Consider the following descending chain of $H_\circ$-lattices:
$$
U_0 = 
\left(
\begin{array}{c}
O \\
O \\
\vdots \\
O \\
O
\end{array}
\right) \supset 
U_1 = 
\left(
\begin{array}{c}
I \\
O \\
\vdots \\
O \\
O
\end{array}
\right) \supset \dots \supset
U_{p-1} = 
\left(
\begin{array}{c}
I \\
I \\
\vdots \\
I \\
O
\end{array}
\right) \supset
U_{p} = w^{-1} U_0 = 
\left(
\begin{array}{c}
I \\
I \\
\vdots \\
I \\
I
\end{array}
\right). 
$$
For any $d \in \ZZ$ we put: $U_{c+dp} := w^{-d} U_c$. Since an $H_\circ$-lattice in $F^p$ is an indecomposable projective left $H_\circ$-module, such lattices form a totally ordered chain 
\begin{equation}\label{E:chain1}
\dots \subset U_{p+1} \subset U_p \subset \dots \subset U_1 \subset U_0 \subset U_{-1} \subset \dots
\end{equation}
Hence, for any $x \in N_{H_\circ}$ there exist unique $0 \le c < p$ and $d \in \ZZ$ such that $x U_0 = w^d U_c$. It follows that $w^{-d} x \in N_{H_\circ}$ is a monomial matrix and  $(w^{-d} x) U_0 = U_c$.

\smallskip
\noindent
\underline{Claim 3}. Let $y \in N_{H_\circ}$ be a monomial matrix such that 
$y U_0 = U_c$ for some $0 \le c < p$. Then we have: 
\begin{equation}\label{E:shape}
y = \left(
\begin{array}{cccccc}
0 & \dots & 0 & w & \dots & 0 \\
\vdots & \ddots & \vdots & \vdots & \ddots & \vdots \\
0 & \dots & 0 & 0 & \dots & w \\
1 & \dots & 0 & 0 & \dots & 0  \\
\vdots & \ddots & \vdots & \vdots & \ddots & \vdots \\
0 & \dots & 1 & 0 & \dots & 0 \\
\end{array}
\right),
\end{equation}
where there are precisely   $c$ rows of $y$  containing the element $w$. 

\smallskip
\noindent
\underline{Proof}. Since $y$ is a monomial matrix we have:
$y U_0 = 
\left(
\begin{array}{c}
(w^{d_1}) \\
\vdots \\
(w^{d_p})
\end{array}
\right),
$
where $w^{d_i}$ is the only non-zero element of  the $i$-th row of $y$. The assumption
$y U_0
= 
U_c$ for some  $0 \le c < p$ in particular implies that $d_i = 0$ for $1 \le i \le c$ and $d_i = 1$ for $c+1 \le i \le p$. 

\smallskip
\noindent
From  conditions $y U_i = U_{c+i}$ for $1\le i \le p-c$ we deduce that 
$y = \left(
\begin{array}{cc}
0 & \ast \\
\mathbbm{1}_{p-c} & 0
\end{array}
\right).
$
The equalities  $y U_{p-c +i} = U_{p+i}$ for $1 \le i \le c$ imply that $y$ is given by  (\ref{E:shape}), as asserted. \qed

Since $H_\circ^\ast \subseteq H^\ast$, any two equivalent elements  $x, x' \in N_{H_\circ}$ are also equivalent in $N_H$. From what was said above it  follows that any element $x \in N_H$ is equivalent to $w^{-d} y$ for some $d \in \ZZ$, where $y$ is a  matrix of the form (\ref{E:shape}) for some $0 \le c < p$.  

Note that $H$-lattices in $F^p$  form the following subchain of the chain 
(\ref{E:chain1}):
$$
\dots \subset U_p \subset \dots \subset U_{p_1+p_2} \subset U_{p_1} \subset U_0 \subset \dots 
$$
Assume that $y \in N_H$ is of the form (\ref{E:shape}) for some $1 \le c < p$. Then $yU_0 = U_c$ with $c = p_1 + \dots + p_s$ for some $1 \le s \le r$. Moreover, we have $p_{s+i} = p_i$ for any $i \in \ZZ$ (where we put  $p_{r+ i} = p_{i}$ for any $i\in \ZZ$). Since the minimal period of the sequence $\vec{p}$ is $t$, we conclude that $s = t k$ for some $1 \le k < l$. As a consequence,
$y = \varrho^{k}$ and the theorem is proven. 
\end{proof}

\begin{corollary}\label{C:SummaryAutomorphisms}
In the above notation, the automorphism group of the standard hereditary order $H = H(O, \vec{p})$ defined by (\ref{E:standardorder}) is generated by the following classes of elements. 
\begin{enumerate}
\item[(a)] Arbitrary inner automorphisms of $H$.
\item[(b)] The automorphisms induced by ring automorphisms of $O$.
\item[(c)] The automorphism $\Ad_\varrho$, where $\varrho$ is given by (\ref{E:Rotation}).
\end{enumerate}
The corresponding induced automorphisms of 
$
 \bar{H}  \cong M_{p_1}(\bar{O}) \times \dots \times M_{p_r}(\bar{O})
 $
 are the following.
 \begin{enumerate}
\item[(a)] Arbitrary inner automorphisms of $\bar{H}$.
\item[(b)] The automorphisms induced by the ring automorphisms of $\bar{O}$, which belong to the image of the canonical homomorphism $\Aut(O) \lar \Aut(\bar{O})$.
\item[(c)] Twisted cyclic shifts  given by the formula (\ref{E:type3}).
\end{enumerate}
Note at this place that if we make another  choice of an uniformizing element $\widetilde{w} \in I$ then $\widetilde{w} = u w$ for some $u \in O^\ast$. Hence, the corresponding twisted cyclic shift (\ref{E:type3}) is related with the original one by an inner automorphism. 
\end{corollary}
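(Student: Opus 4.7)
The plan is to assemble this corollary from the preparatory lemmas and Theorem \ref{T:HereditaryAutom}, since at this point essentially all the substantive work has been done. I would split the task into (i) showing the three listed classes generate $\Aut(H)$, (ii) identifying the induced action on $\bar{H}$, and (iii) the uniformizer-independence remark.

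For (i), given $\vartheta \in \Aut(H)$, the lemma immediately preceding Theorem \ref{T:HereditaryAutom} gives $\psi \in \Aut(O)$ with $\varphi := \vartheta \widetilde{\psi}^{-1} \in \Aut_R(H)$. Applying Skolem--Noether inside the rational hull $\Lambda = M_p(F)$ yields $x \in \Lambda^\ast$ with $\varphi = \Ad_x$, and by construction $x$ normalizes $H$, so $x \in N_H$. Theorem \ref{T:HereditaryAutom} then provides $g, h \in H^\ast$ and integers $d \in \ZZ$, $0 \le k < l$ such that $x = g \cdot w^{-d}\varrho^k \cdot h$, hence
\[
\vartheta \;=\; \widetilde{\psi} \circ \Ad_g \circ \Ad_{w^{-d}} \circ (\Ad_\varrho)^k \circ \Ad_h.
\]
Here $\Ad_g$ and $\Ad_h$ are inner (type (a)) and $(\Ad_\varrho)^k$ is type (c). The key observation is that $\Ad_{w^{-d}}|_H$ coincides with the extension to $H$ of $\Ad_{w^{-d}} \in \Aut(O)$: since $w$ is a scalar matrix in $\Lambda$, conjugation by $w$ acts entry-wise on $H$. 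Thus this factor, together with $\widetilde{\psi}$, is of type (b), completing the decomposition.

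For (ii), the three formulas (\ref{E:type1}), (\ref{E:type2}), (\ref{E:type3}) collected in the remark preceding Theorem \ref{T:HereditaryAutom} compute exactly the induced automorphisms of $\bar{H}$ of each type. That inner automorphisms of $\bar{H}$ are \emph{exhausted} by the images of inner automorphisms of $H$ is the surjectivity of $H^\ast \to \bar{H}^\ast$ (part (b) of that remark). That type (b) gives precisely the image of $\Aut(O) \to \Aut(\bar{O})$ is tautological from the definition of $\widetilde{\psi}$.

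For (iii), if $\widetilde{w} = u w$ with $u \in O^\ast$, the corresponding rotation matrix $\widetilde{\varrho}$ differs from $\varrho$ only in the top-right block, where $w\mathbbm{1}_q$ is replaced by $uw\mathbbm{1}_q$. A direct block multiplication shows $\widetilde{\varrho} = D\cdot\varrho$ for the block-diagonal unit $D = \mathrm{diag}(u\mathbbm{1}_q, \mathbbm{1}_q, \dots, \mathbbm{1}_q) \in H^\ast$, so $\Ad_{\widetilde{\varrho}} = \Ad_D \circ \Ad_\varrho$, which is the asserted inner twist. I don't expect a serious obstacle anywhere; the only point requiring mild care is the identification of $\Ad_{w^{-d}}|_H$ with a type-(b) generator, which is what makes the factorization in step (i) match the stated list of generators exactly.
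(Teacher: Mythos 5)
Your argument is correct and is essentially the paper's intended derivation: the corollary is stated without an explicit proof precisely because it is a direct assembly of the preparatory lemma, Theorem~\ref{T:HereditaryAutom}, and the formulas (\ref{E:type1})--(\ref{E:type3}) in the preceding remark, which is exactly how you have organized it. One tiny slip worth fixing: since $\varphi=\vartheta\widetilde\psi^{-1}$, the decomposition should read $\vartheta=\Ad_g\circ\Ad_{w^{-d}}\circ(\Ad_\varrho)^k\circ\Ad_h\circ\widetilde\psi$ rather than with $\widetilde\psi$ on the left (immaterial for the generating-set claim), and you could shorten the discussion of the $\Ad_{w^{-d}}$ factor by observing that $\varrho^l=w\mathbbm{1}_p$ already makes it a power of the type~(c) generator.
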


\begin{remark} In the case $O = R$, a description of the group $\Aut_R(H)$ is given in \cite[Theorem 5]{HaefnerPappacena}. Although the results of Corollary \ref{C:SummaryAutomorphisms} appear to be quite expected and standard, we were not able to find a proof of the general statement  in the existing literature. 
\end{remark}

\begin{lemma}\label{L:FieldExtensions}
Let $\kk$ be a field, $R = \kk\llbracket t\rrbracket$ and $K = \kk\llbrace t \rrbrace$. Let $K \subseteq K'$ be any finite field extension. Then there exists a finite field extension $\kk \subseteq \kk'$ and an isomorphism of 
$\kk$-algebras $K' \cong \kk'\llbrace w \rrbrace$. 
\end{lemma}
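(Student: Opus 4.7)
The plan is to view $K' / K$ as a finite extension of complete discretely valued fields and then invoke the Cohen structure theorem in equal characteristic.

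First I would equip $K = \kk\llbrace t\rrbrace$ with its $t$-adic valuation $v$, for which $K$ is complete and $R$ is the valuation ring. Since $K$ is complete and $K'/K$ is finite, classical theory (see e.g.~\cite{EnglerPrestel}) guarantees that $v$ extends uniquely to a discrete valuation $v'$ on $K'$, that $K'$ is complete with respect to $v'$, and that $[K':K] = e f$, where $e$ is the ramification index and $f$ the residue degree. Let $R' \subset K'$ be the valuation ring of $v'$, let $\idm'$ be its maximal ideal, and let $\kk' := R'/\idm'$; then $\kk'/\kk$ is a finite field extension. Choosing a uniformizer $w \in \idm'$, one automatically has $R \subseteq R'$, hence the composition $\kk \hookrightarrow R \hookrightarrow R' \twoheadrightarrow \kk'$ is the inclusion of the residue field of $R$ into that of $R'$.

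Next I would produce a $\kk$-algebra section of $R' \twoheadrightarrow \kk'$. This is exactly the statement that the complete Noetherian local ring $R'$, which has equal characteristic (since it contains the field $\kk$), admits a coefficient field \emph{containing the prescribed subfield} $\kk$. Granting such a section $\kk' \hookrightarrow R'$, the continuous $\kk$-algebra map $\kk'\llbracket w \rrbracket \lar R'$ sending the indeterminate to the uniformizer is injective (since $w$ has positive valuation) and surjective (by $\idm'$-adic completeness of $R'$ combined with the isomorphism on residue fields). Inverting $w$ yields the required isomorphism of $\kk$-algebras $K' \cong \kk'\llbrace w\rrbrace$.

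The main obstacle is the existence of the coefficient field containing $\kk$. In characteristic zero, or whenever the residue extension $\kk'/\kk$ is separable, this is an easy consequence of Hensel's lemma applied to a minimal polynomial of a primitive generator of $\kk'$ over $\kk$. In positive characteristic with $\kk'/\kk$ inseparable, one needs the full strength of the Cohen structure theorem for complete Noetherian local rings of equal characteristic: one checks that $\kk \subset R'$ is a quasi-coefficient field (maps injectively into $\kk'$) and then extends it to a coefficient field by a standard Zorn-type argument combined with lifting $p$-th roots through the $\idm'$-adic completion. This is the only non-formal ingredient; everything else reduces to bookkeeping with the extended valuation and $\idm'$-adic completeness.
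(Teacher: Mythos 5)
Your overall strategy coincides with the paper's: produce a complete discrete valuation ring $R' \subset K'$ with residue field $\kk'$ and a uniformizer $w$, and then apply the Cohen structure theorem to get $R' \cong \kk'\llbracket w\rrbracket$. You reach $R'$ through the unique extension of the complete discrete valuation of $K$, while the paper takes $R'$ to be the integral closure of $R$ in $K'$ and quotes excellence (EGA IV, 23.1) for the finiteness of $R\subseteq R'$; these produce the same ring $R'$, and your first half is a legitimate, if anything more elementary, alternative.

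The gap is in your final paragraph. A quasi-coefficient field of $R'$ is \emph{not} ``a subfield mapping injectively into $\kk'$'' (every subfield of a local ring does that); it is a subfield $\kk_0\subseteq R'$ over which $\kk'$ is formally \'etale, i.e.\ separable algebraic, and only for such $\kk_0$ does one get a coefficient field $\supseteq \kk_0$ (Matsumura, Thm.\ 28.3). When $\kk'/\kk$ is inseparable your proposed Zorn/$p$-th-root argument cannot succeed, because the desired conclusion itself fails: take $\kk=\FF_p(x)$, $K=\kk\llbrace t\rrbrace$ and $K'=K[y]/(y^p-x-t)$. Then $R'=R[y]$ is a complete DVR with uniformizer $t$ and residue field $\kk'=\kk(x^{1/p})$, yet $x$ has no $p$-th root in $K'$ --- indeed $K'^p=K^p(x+t)$ inside $K$ (compute $(\sum a_iy^i)^p$), and $x\notin K^p(x+t)$ because that would force $t\in K^p(x)=\FF_p(x)\llbrace t^p\rrbrace$. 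Since the residue field of the unique complete discrete valuation of $K'$ is $\kk(x^{1/p})$, which therefore does not embed $\kk$-linearly into $K'$, there is no $\kk$-algebra isomorphism $K'\cong\kk'\llbrace w\rrbrace$. So this step is not ``bookkeeping'': over an imperfect base field the lemma as stated can be false, and some separability hypothesis on the residue extensions is needed. (The paper's own proof invokes the Cohen structure theorem without addressing $\kk$-linearity and thus carries the same latent gap; it is harmless for the paper's applications, where $\kk$ is perfect --- chiefly $\kk=\RR$ --- and every subfield lifts to a coefficient field via Hensel's lemma.)
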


\begin{proof}  Let $R'$ be the integral closure of $R$ in $K'$. It follows from  \cite[Definition 23.1.1 and Theorem 23.1.5]{EGA} that the extension $R \subseteq R'$ is finite (regardless of the characteristic of $\kk$) and $Q(R') = K'$. Moreover, $R'$ is normal and has Krull dimension one, hence it is regular. Since $R \subseteq R'$ is finite and $R$ is complete and local, $R'$ is complete and semi-local.
Since $Q(R')$ is a field, $R'$ is local. Let $I$ be the maximal ideal of $R$ and $I'$ the maximal ideal  of $R'$. Then we get a finite field extension 
$\kk \cong R/I \subseteq R'/I' =: \kk'$. By the Cohen Structure Theorem (see e.g.~\cite[Theorem 7.7]{Eisenbud}) there exists an isomorphism of $\kk$-algebras 
$R' \cong \kk'\llbracket w\rrbracket$. As a consequence, $K' \cong \kk'\llbrace w \rrbrace$, as asserted. 
\end{proof}

\begin{theorem}\label{T:HeredOrders} Let $\kk$ be a field, $H$ be a hereditary $\kk$-order and $Z$ be the center of $H$.  Then the following statements are true.
\begin{enumerate}
\item[(a)] There is an isomorphism of $\kk$-algebras $Z \cong D_1  \times \dots \times D_t$, where $D_i = \kk_i\llbracket w\rrbracket$  and $\kk \subset \kk_i$ is a finite field extension for any $1 \le i \le t$. 
\item[(b)] Let $L_i = \kk_i\llbrace w\rrbrace$. Then there exists a finite-dimensional skew field $F_i$ over $L_i$ and $r_i \in \NN, \vec{p_i} \in \NN^{r_i}$ such that 
\begin{equation}\label{E:DecompHered}
H \cong H(O_1, \vec{p}_1) \times \dots \times H(O_t, \vec{p}_t),
\end{equation}
where $O_i$ is the maximal order in $F_i$.
\end{enumerate}
\end{theorem}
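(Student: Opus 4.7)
The plan is to first split $H$ according to the primary decomposition of its center, reducing to the case of a local center; then to identify that local center with a power series ring over a finite extension of $\kk$ via the Cohen Structure Theorem; and finally to invoke Theorem \ref{T:StandardOrder} on each resulting block.

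Since $H$ is a complete $\kk$-order, its center $Z$ is a commutative complete semi-local Noetherian $\kk$-algebra, and hence decomposes canonically as a product $Z \cong Z_1 \times \dots \times Z_t$ of complete local rings. Lifting the corresponding central idempotents to $H$ gives $H \cong H_1 \times \dots \times H_t$, in which each $H_i$ is a hereditary $\kk$-order with local center $Z_i$, so it suffices to treat each factor individually.

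Next, I would show that each $Z_i$ is a complete DVR whose residue field is finite over $\kk$. Noether normalisation provides an embedding $R = \kk\llbracket t\rrbracket \hookrightarrow Z_i$ making $Z_i$ a finitely generated torsion-free $R$-module; hence $Z_i$ has Krull dimension one, and $Q(Z_i) = K \otimes_R Z_i$ is the center of the semi-simple $K$-algebra $Q(H_i)$, which is a field because $Z_i$ is local. The hereditariness of $H_i$ then forces $Z_i$ to be integrally closed in $Q(Z_i)$, this being the classical statement that the center of a hereditary order is normal (see e.g.\ \cite[Theorem 10.5]{ReinerMO}). A one-dimensional complete local Noetherian normal domain is regular, hence a DVR. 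The residue field $\kk_i := Z_i/\rad(Z_i)$ injects into $\bar{H}_i := H_i/\rad(H_i)$ (since $\rad(Z_i) = Z_i \cap \rad(H_i)$), and $\bar{H}_i$ is finite-dimensional over $\kk$ by the $\kk$-order hypothesis, so $\kk_i$ is a finite extension of $\kk$. The Cohen Structure Theorem now yields an isomorphism of $\kk$-algebras $Z_i \cong \kk_i\llbracket w\rrbracket$, establishing (a).

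For (b), set $L_i := Q(Z_i) \cong \kk_i\llbrace w\rrbrace$. Then $Q(H_i) = L_i \otimes_{Z_i} H_i$ is a semi-simple $L_i$-algebra whose center is $L_i$, i.e.\ a central simple $L_i$-algebra. Wedderburn supplies some $p_i \in \NN$ and a finite-dimensional central skew field $F_i$ over $L_i$ with $Q(H_i) \cong M_{p_i}(F_i)$; let $O_i$ denote the maximal order in $F_i$. Theorem \ref{T:StandardOrder}(b) then delivers $r_i \in \NN$ and $\vec{p}_i \in \NN^{r_i}$ such that $H_i \cong H(O_i, \vec{p}_i)$. Reassembling the blocks yields the decomposition (\ref{E:DecompHered}). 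The only step that genuinely uses hereditariness is the normality of each local center $Z_i$; everything else is a book-keeping combination of the Cohen Structure Theorem with the Harada classification recorded as Theorem \ref{T:StandardOrder}, and so identifying a clean reference (or a short self-contained argument) for this normality is the main potential obstacle.
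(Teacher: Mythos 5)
Your overall route matches the paper's (which simply cites Harada's Theorem 2.6, Lemma~\ref{L:FieldExtensions}, and Theorem~\ref{T:StandardOrder}): split off the central primary decomposition, identify each local center with a power series ring via Cohen, and apply Theorem~\ref{T:StandardOrder}(b). However, there is a genuine logical gap in the middle step. You claim that $Q(Z_i) = K \otimes_R Z_i$ ``is a field because $Z_i$ is local.'' That implication is false: locality of a complete Noetherian ring does not force its total quotient ring to be a field. The standard counterexample is $Z = \kk\llbracket x, y\rrbracket/(xy)$, which is complete local Noetherian of dimension one but has $Q(Z) \cong \kk\llbrace x\rrbrace \times \kk\llbrace y\rrbrace$. (Of course $Z$ is not the center of a hereditary order, which is the whole point.) Worse, you then use the (unestablished) conclusion that $Z_i$ is a domain when invoking ``a one-dimensional complete local Noetherian normal domain is regular.''

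The fix is to reverse the order of the argument, exactly as the paper does in the proof of Lemma~\ref{L:FieldExtensions}: first invoke the fact that the center of a hereditary order is normal, so $Z_i$ is a (finite product of) normal one-dimensional domain(s); then observe that $Z_i$, being complete and local, has no nontrivial idempotents and so must be a \emph{single} normal domain, hence a complete DVR; and only then conclude $Q(Z_i)$ is a field. (The paper's lemma runs the equivalent implication ``$Q(R')$ is a field $\Rightarrow R'$ is local,'' which is the valid direction; your proposal attempts the converse, which isn't.) One should also double-check the citation for normality of centers of hereditary orders: \cite[Theorem 10.5]{ReinerMO} does not appear to be the relevant statement -- the correct references are in the chapter on hereditary orders (around Theorems 39--40 in Reiner, or Harada's original papers) -- though this is a bookkeeping issue rather than a mathematical one.
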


\noindent
\emph{Proofs} of all these results follow from \cite[Theorem 2.6]{Harada}, Lemma \ref{L:FieldExtensions} and  Theorem \ref{T:StandardOrder}.

\section{Finite-dimensional simple $\RR\llbrace t \rrbrace$-algebras and their maximal orders}\label{S:RealHereditary}
Let $\KK = \RR\llbrace t \rrbrace$ and $\Sigma$ be a finite-dimensional semi-simple $\KK$-algebra. Then we have: $\Sigma \cong \Sigma_1 \times \dots \times \Sigma_t$, where $\Sigma_i$ is a simple $\KK$-algebra for all $1 \le i \le t$. 
Let $\KK_i$ be the center of $\Sigma_i$. Then $\KK \subseteq \KK_i$ is a finite field extension. It follows from Lemma \ref{L:FieldExtensions} that 
$\KK_i$ (viewed as an $\RR$-algebra) is isomorphic either to $\KK$ or to $\LL = \CC\llbrace t \rrbrace$.

 Let  $\Sigma$ be a central simple $\LL$-algebra. Since the Brauer group
$\mathsf{Br}(\mathbb{L})$ of  $\LL$ is trivial (see for instance \cite[Corollary 6.3.5]{GilleSzamuely}), we  conclude that $\Sigma \cong \Mat_{n}(\LL)$ for some $n \in \NN$. 

Suppose now that  $\Sigma$ is a central simple $\KK$-algebra. Then $\Sigma \cong \Mat_{n}(\SS)$, where $\SS$ is a finite-dimensional skew field over $\KK$. Such skew-fields are in bijection with the elements of the Brauer group
$\mathsf{Br}(\KK)$. First note that the Galois group of the extension 
$\LL/\KK$ is $G = \langle \sigma \, \big| \, \sigma^2 = e\rangle$, where
$
\sigma\left(\sum\limits_{l = l_0}^\infty \alpha_l t^l\right) = \sum\limits_{l = l_0}^\infty \bar{\alpha}_l t^l.
$
Since $\mathsf{Br}(\LL) = 0$, we have:
\begin{equation}
\mathsf{Br}(\KK) =  \mathsf{Br}(\LL/\KK) \cong H^2(G, \LL^\ast),
\end{equation}
where $\mathsf{Br}(\LL/\KK)$ denotes the relative Brauer group of the  extension $\LL/\KK$; see for instance \cite[Section 5.6]{DrozdKirichenko}. Let us now recall the description of the second Galois cohomology group.

 Let $L/K$ be a finite Galois field extension with the Galois group
$G = \mathsf{Gal}(L/K)$. The underlying group of compatible two-cocycles is defined as follows: 
$$
Z^2(G, L^\ast) =  \left. \left\{
G \times G \stackrel{\omega}\lar L^\ast
 \; \right| \;  \begin{array}{l}f\bigl(\omega(g, h)\bigr) \omega(f, gh)  = \omega(f, g) \omega(fg, h) \\ \omega(e, g) = 1 = \omega(g, e)\end{array}
\, \mbox{\rm for all} \, f, g, h \in G\right\}. 
$$
For any map $G \stackrel{\phi}\lar L^\ast$ such that $\phi(e) = 1$ we put:
$$
G \times G \stackrel{\partial_\phi}\lar L^\ast, \; \mbox{\rm where} \; \partial_{\phi}(f, g)  =  \phi(f) f\bigl(\phi(g)\bigr) \bigl(\phi(fg)\bigr)^{-1} \; \mbox{\rm for all} \; f, g \in G.
$$
Then we have: 
$$
B^2(G, L^\ast) =  \left. \left\{
G \times G \stackrel{\partial_\phi}\lar L^\ast
 \; \right| \;  \phi(e) = 1
\; \right\} \; \mbox{\rm and} \; H^2(G; L^\ast) = Z^2(G, L^\ast)/B^2(G, L^\ast).
$$
We refer to \cite[Theorem 5.6.6]{DrozdKirichenko} for a description of the group isomorphism 
\begin{equation}\label{E:RelativeBrauer}
H^2(G, L^\ast) \stackrel{\cong}\lar \mathsf{Br}(L/K).
\end{equation}

\begin{lemma}\label{L:GaloisReal} We have:
$
H^2\bigl(\mathsf{Gal}(\LL/\KK), \LL^\ast\bigr) \cong \ZZ_2 \times \ZZ_2. 
$
\end{lemma}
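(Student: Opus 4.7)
Let $G = \mathsf{Gal}(\LL/\KK) = \langle \sigma \mid \sigma^2 = e\rangle$, a cyclic group of order two. My plan is to use the standard description of the cohomology of cyclic groups: for any $G$-module $M$,
$$
H^2(G, M) \cong M^{G}\big/ N(M), \qquad N(m) = m \cdot \sigma(m).
$$
Applied to $M = \LL^{\ast}$ this gives $H^{2}(G, \LL^{\ast}) \cong \KK^{\ast}/N(\LL^{\ast})$, so the task reduces to computing the cokernel of the norm map $N : \LL^{\ast} \to \KK^{\ast}$.

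The second step is to decompose $\LL^{\ast}$ into convenient $G$-stable factors. Writing $V_1 := 1 + t\,\CC\llbracket t\rrbracket$ and $U_1 := 1 + t\,\RR\llbracket t\rrbracket$, every nonzero Laurent series $f \in \LL^{\ast}$ has a unique presentation $f = \alpha \cdot t^{n} \cdot v$ with $\alpha \in \CC^{\ast}$, $n \in \ZZ$ and $v \in V_1$. Since $\sigma$ fixes $t$ and acts on coefficients by complex conjugation, the resulting isomorphism
$$
\LL^{\ast} \;\cong\; \CC^{\ast} \times \ZZ \times V_1
$$
is $G$-equivariant (with $G$ acting on $\CC^{\ast}$ by conjugation, trivially on $\ZZ$, and coefficient-wise on $V_1$). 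Taking $G$-invariants recovers $\KK^{\ast} \cong \RR^{\ast}\times\ZZ\times U_1$ and the norm map splits as a product of three norms which I can analyze separately.

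The third step is the factor-wise computation. On the $\CC^{\ast}$ component, $N(\alpha)=|\alpha|^{2}$, so $N(\CC^{\ast})=\RR^{>0}$ and the quotient $\RR^{\ast}/\RR^{>0}$ is $\ZZ_2$. On the $\ZZ$ component, $N(n)=2n$, giving another quotient $\ZZ/2\ZZ \cong \ZZ_2$. The final and main point is to show that $N: V_1 \to U_1$ is surjective, so that this factor contributes nothing; combining these three computations yields $H^{2}(G,\LL^{\ast})\cong \ZZ_2\times \ZZ_2$.

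The heart of the proof, and the only nontrivial step, is the surjectivity of $N$ on the principal units. I would prove it by a Hensel-type recursive construction: given a target $u = 1 + b_1 t + b_2 t^{2} + \cdots \in U_1$ with $b_i \in \RR$, I look for $v = 1 + a_1 t + a_2 t^{2} + \cdots \in V_1$ with $N(v) = v \cdot \sigma(v) = u$. Comparing the coefficient of $t^{n}$ gives the identity
$$
a_n + \bar a_n \;=\; b_n - \sum_{\substack{i+j = n \\ 0 < i,\,j < n}} a_i\, \bar a_j,
$$
so once $a_1,\dots,a_{n-1}$ have been chosen, the right-hand side is a fixed real number and one can simply set $a_n$ to be half of it. This recursion produces the required preimage $v\in V_1$, establishing $N(V_1)=U_1$ and completing the proof.
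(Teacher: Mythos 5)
Your proof is correct, and it is a bit more explicit than the paper's while reaching the same conclusion. You invoke the standard isomorphism $H^2(G,M)\cong M^G/N(M)$ for a finite cyclic group $G$, decompose $\LL^\ast$ $G$-equivariantly as $\CC^\ast\times t^{\ZZ}\times\bigl(1+t\,\CC\llbracket t\rrbracket\bigr)$, and compute the cokernel of the norm factor by factor, finishing with a Hensel-type recursion to show the norm is surjective on principal units. The paper instead derives the same reduction directly from the cocycle and coboundary definitions in the two-element case (a cocycle is determined by $\omega(\sigma,\sigma)\in\KK^\ast$, and $\partial_\phi(\sigma,\sigma)=b\,\sigma(b)$), and then simply records the quotient as $\KK^\ast/\KK^{\ast 2}$ with representatives $\{1,-1,t,-t\}$, leaving the identification $N(\LL^\ast)=\KK^{\ast 2}$ unstated. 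Your factor-wise computation supplies precisely that missing justification: the two $\ZZ_2$ factors come from the sign of the leading coefficient and the parity of the $t$-adic valuation, and the principal units contribute nothing because they lie in the image of the norm. So the two arguments are parallel in structure; yours is more self-contained at the final step, while the paper's is shorter but relies on the reader to see why the norm group coincides with the squares.
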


\begin{proof} Since $G = \mathsf{Gal}(\LL/\KK) = \{e, \sigma\}$, a compatible two-cocycle  
$\omega \in Z^2(G, \LL^\ast)$ is determined by its value $a = \omega(\sigma, \sigma) \in \LL^\ast$ satisfying the only condition $\sigma(a) = a$, i.e. $a \in \KK^\ast$. On the other hand, let $G \stackrel{\phi}\lar \LL^\ast$ be any map such that $\phi(e) = 1$. If  $b = \phi(\sigma)$ then $\partial_\phi(\sigma, \sigma) = \big|b\bigr|^2$. It follows that
$$
H^2(G, \LL^\ast) \cong \KK^\ast/\KK^{\ast 2} = \bigl\{1, -1, \bar{t}, -\bar{t} \bigr\}  \cong \ZZ_2 \times \ZZ_2,
$$
as asserted. 
\end{proof}

\begin{corollary} Since all non-unit elements of $\mathsf{Br}(\KK)$ have order two, they correspond to quaternion algebras over $\KK$. Let $v \in 
\bigl\{1, -1, {t}, -{t} \bigr\}$. According to \cite[Section 5.6]{DrozdKirichenko} the corresponding central simple $\KK$-algebra $\Sigma_v$ has the following description: 
\begin{equation}
\Sigma_v = \HH(-1, \upsilon) = \left\langle i, j \, \big| \, i^2 = -1, j^2 = \upsilon, ij = - ji\right\rangle_{\KK}.
\end{equation}
Additionally, the following statements are true. 
\begin{enumerate}
\item[(a)] For $v = 1$ we have: $\Sigma_1 = \mathsf{Mat}_2(\KK)$. Of course, 
$\bigl[\Sigma_1\bigr] = [\KK] \in \mathsf{Br}(\KK)$ is the neutral element. 
\item[(b)] For $v = -1$ we have: $\Sigma_{-1} = \HH\llbrace t\rrbrace$, where $\HH$  is the skew field of real quaternions. 
\end{enumerate}
Note that $\Sigma_t$ and $\Sigma_{-t}$ are  not isomorphic as $\KK$-algebras but  isomorphic as $\RR$-algebras. The corresponding isomorphism $\Sigma_t \lar \Sigma_{-t}$ is given 
by the assignment $i \mapsto i, j \mapsto j, t \mapsto -t$. As an $\RR$-algebra,  $\Sigma_{t}$ can be realized as the algebra of twisted formal Laurent  series
\begin{equation*}
\CC\llbrace z\rrbrace^{\mathsf{tw}} = \Bigl\{\sum\limits_{k = k_0}^\infty \lambda_k z^k \; \big|\, k_0 \in \ZZ,  \lambda_k \in \CC \; \mbox{\rm for all} \; k \in \ZZ\Bigr\}, \; \mbox{\rm where}\;  z \lambda = \bar{\lambda} z \; \mbox{\rm for all} \; \lambda \in \CC.
\end{equation*}
Here,  the isomorphism 
$\CC\llbrace z\rrbrace^{\mathsf{tw}} \cong \Sigma_t$ identifies $z$ with $j$. 
\end{corollary}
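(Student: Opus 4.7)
The plan is to verify each clause in turn, taking the presentation $\Sigma_v = \HH(-1, v)$ as given by the cited cocycle-to-algebra correspondence (which in turn rests on Lemma \ref{L:GaloisReal}). With that identification in hand, the remaining content is essentially bookkeeping within the explicit quaternion presentations.

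For (a), I would exhibit a nontrivial idempotent in $\Sigma_1 = \HH(-1,1)$: because $j^2 = 1$, the element $e = (1+j)/2$ satisfies $e^2 = e$ and is neither $0$ nor $1$. Hence $\Sigma_1$ is a four-dimensional central simple $\KK$-algebra that is not a division algebra, so by Wedderburn it must equal $M_2(\KK)$. For (b), the relations $i^2 = j^2 = -1$ and $ij = -ji$ in $\Sigma_{-1} = \HH(-1,-1)$ are precisely the defining relations of the Hamilton quaternions $\HH$ over $\RR$, so $\Sigma_{-1} \cong \HH \otimes_\RR \KK \cong \HH\llbrace t\rrbrace$ as $\KK$-algebras.

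For the final claims concerning $\Sigma_t$, I would first check that $i \mapsto i$, $j \mapsto j$, $t \mapsto -t$ is a well-defined $\RR$-algebra isomorphism $\Sigma_t \to \Sigma_{-t}$: it sends $j^2 = t$ to $j^2 = -t$, which is the relation holding in $\Sigma_{-t}$, while the remaining relations $i^2 = -1$ and $ij = -ji$ are unaffected. The non-isomorphism as $\KK$-algebras is immediate, since $\Sigma_t$ and $\Sigma_{-t}$ represent distinct classes in $\mathsf{Br}(\KK)$ by Lemma \ref{L:GaloisReal}. To identify $\Sigma_t$ with $\CC\llbrace z \rrbrace^{\mathsf{tw}}$, I would send $z \mapsto j$ and embed $\CC = \RR \oplus \RR i$ into $\Sigma_t$ in the obvious way; the twisted commutation rule then follows from the direct computation $j \lambda = j(a + bi) = aj - bij = (a - bi)j = \bar\lambda j$ for $\lambda = a + bi \in \CC$.

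The main point to verify carefully is that $z \mapsto j$ extends to a ring homomorphism from the twisted Laurent series ring to $\Sigma_t$: once the commutation rule is fixed, it is a matter of declaring where each monomial goes and checking that series in $z$ converge in the $t$-adic topology inside $\Sigma_t$ via the identification $z^2 = j^2 = t$. The resulting $\RR$-algebra map is nonzero and hence injective by simplicity of $\Sigma_t$; a comparison of $\RR\llbrace z^2\rrbrace$-ranks (both sides are free of rank four on the respective bases $\{1, i, z, iz\}$ and $\{1, i, j, ij\}$) then gives surjectivity, completing the proof.
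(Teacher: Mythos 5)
The paper states this Corollary without proof, treating it as routine once the cocycle-to-algebra dictionary of \cite[Section 5.6]{DrozdKirichenko} and Lemma \ref{L:GaloisReal} are in hand; your proposal supplies exactly the expected verifications. Items (a) and (b) are correct: $e=(1+j)/2$ is a nontrivial idempotent in $\HH(-1,1)$ over the characteristic-zero field $\KK$, forcing $\Sigma_1\cong M_2(\KK)$ by Wedderburn, and the relations of $\HH(-1,-1)$ visibly match those of $\HH\otimes_\RR\KK\cong\HH\llbrace t\rrbrace$. The check that $i\mapsto i$, $j\mapsto j$, $t\mapsto -t$ gives a well-defined $\RR$-algebra isomorphism $\Sigma_t\to\Sigma_{-t}$ is right, and non-isomorphism over $\KK$ does follow because the four classes $\{1,-1,t,-t\}$ represent distinct elements of $\KK^\ast/\KK^{\ast 2}\cong\mathsf{Br}(\KK)$ by Lemma \ref{L:GaloisReal}. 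The construction of the map $\CC\llbrace z\rrbrace^{\mathsf{tw}}\to\Sigma_t$ via $z\mapsto j$, $a+bi\mapsto a+bi$, together with the computation $j(a+bi)=(a-bi)j$ matching $z\lambda=\bar\lambda z$, is the intended identification.

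One small slip: for injectivity you write ``by simplicity of $\Sigma_t$,'' but the kernel argument requires simplicity of the \emph{source} $\CC\llbrace z\rrbrace^{\mathsf{tw}}$, which is a skew field (every nonzero twisted Laurent series is invertible), hence has no nonzero proper two-sided ideals. With that correction the injectivity-plus-rank-count over the common center $\RR\llbrace z^2\rrbrace\cong\RR\llbrace t\rrbrace$ (both sides free of rank four on $\{1,i,z,iz\}$ and $\{1,i,j,ij\}$ respectively) does give the isomorphism. The convergence remark is also sound: the map sends $\CC\llbracket z\rrbracket^{\mathsf{tw}}$ into the $t$-adically complete maximal order $O_t$ since $j^{2m}=t^m$ and $j^{2m+1}=t^m j$, and inverting $z^2=t$ then extends it to the Laurent series ring.
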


\smallskip
\noindent
For any  $v \in 
\bigl\{1, -1, {t}, -{t} \bigr\}$, let  us  now explicitly describe the corresponding maximal orders $O_v \subset \Sigma_v$ (recall again that $O_v$ is uniquely determined \cite[Theorem 12.8]{ReinerMO}).
Let $I_v$ be the Jacobson radical of $O_v$ and $F_v = O_v/I_v$ be the corresponding residue skew field. 

\begin{lemma} The following statements are true.
\begin{enumerate}
\item[(a)] If $v = 1$ then $O_1 = \RR\llbracket t\rrbracket$ and $F_1 \cong  \RR$.
\item[(b)] If $v = -1$ then $O_{-1} = \HH\llbracket t\rrbracket$ and $F_{-1} \cong  \HH$.
\item[(c)] If $v = \pm t$ then $O_{\pm t} = \left\langle i, j \, \big| \, i^2 = -1, j^2 = \pm t, ij = - ji\right\rangle_{\RR\llbracket t\rrbracket}$ and 
$F_{\pm t} \cong \CC$.
\end{enumerate}
\end{lemma}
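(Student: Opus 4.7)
The plan is to invoke the uniqueness of the maximal order in a finite-dimensional division algebra over the complete discrete valuation field $\KK$, as in \cite[Theorem 12.8]{ReinerMO}, and to identify this maximal order concretely as the set of elements whose reduced trace and reduced norm both lie in $\RR\llbracket t\rrbracket$. The residue skew fields are then computed by passing to the quotient by the uniformizer. Part (a) is immediate: the underlying division algebra of $\Sigma_1 = M_2(\KK)$ is $\KK$ itself, whose unique maximal order is $\RR\llbracket t\rrbracket$, with residue field $F_1 \cong \RR$.

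For part (b), the ring $\HH\llbracket t\rrbracket = \HH \otimes_\RR \RR\llbracket t\rrbracket$ is a free $\RR\llbracket t\rrbracket$-module of rank $4$ spanning $\Sigma_{-1} = \HH\llbrace t\rrbrace$, and its standard basis $1, i, j, k$ consists of integral elements, so $\HH\llbracket t\rrbracket \subseteq O_{-1}$. For the reverse inclusion, I would take $x = a + bi + cj + dk$ with $a, b, c, d \in \KK$ and compute $\mathrm{trd}(x) = 2a$, $\mathrm{nrd}(x) = a^2 + b^2 + c^2 + d^2$. Integrality of $\mathrm{trd}(x)$ forces $a \in \RR\llbracket t\rrbracket$; integrality of $\mathrm{nrd}(x)$, combined with the formal reality of $\RR$ (a nonzero sum of squares in $\RR\llbrace t\rrbrace$ has even leading valuation and strictly positive leading coefficient), then forces $b, c, d \in \RR\llbracket t\rrbracket$ as well. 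Since $\HH\llbracket t\rrbracket/(t) \cong \HH$ is a skew field, the maximal ideal is $t \HH\llbracket t\rrbracket$, yielding $F_{-1} \cong \HH$.

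For part (c), the ring $O = \RR\llbracket t\rrbracket\langle i, j\rangle$ with the stated relations is a free $\RR\llbracket t\rrbracket$-module of rank $4$ with basis $1, i, j, ij$, and after tensoring with $\KK$ recovers the presentation of $\Sigma_{\pm t}$. Since the basis elements are integral, $O \subseteq O_{\pm t}$. For the reverse inclusion, for $x = a + bi + cj + dij$ one computes $\mathrm{nrd}(x) = a^2 + b^2 \mp t(c^2 + d^2)$. A parity-of-valuation analysis shows that integrality of $\mathrm{trd}(x)$ and $\mathrm{nrd}(x)$ forces $a, b, c, d \in \RR\llbracket t\rrbracket$: if $c^2 + d^2$ had negative leading valuation $-2m$, the formal reality of $\RR$ ensures its leading coefficient is nonzero, so $\mp t(c^2 + d^2)$ would have odd leading valuation $1-2m$, whereas $b^2$ has even leading valuation, precluding cancellation; a similar argument handles $b$. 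For the residue, $j^2 = \pm t$ is a non-unit of the center, so $j \in \rad(O)$; the two-sided ideal $(j)$ contains $t = \pm j^2$, and a direct calculation yields $O/(j) \cong \RR[i]/(i^2+1) \cong \CC$, giving $(j) = \rad(O)$ and $F_{\pm t} \cong \CC$.

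The main obstacle is the parity-of-valuation argument in parts (b) and (c), which crucially depends on the formal reality of $\RR$: a sum of squares of real Laurent series has strictly positive leading coefficient and is zero only when all summands vanish. This is precisely the arithmetic feature distinguishing the real case from the algebraically closed case and is what produces the extra families of scalar local orders indexed by the nontrivial Brauer classes $\pm t$.
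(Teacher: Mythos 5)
Your proof is correct, but it takes a genuinely different route from the paper's. The paper verifies maximality via the Auslander--Goldman criterion (\cite[Theorem 18.4]{ReinerMO}): it observes that the candidate $O_v$ is an order, checks that its radical $I_v = O_v j$ is a projective $O_v$-module (so $O_v$ is hereditary), and that $O_v/I_v$ is a division ring, which together imply maximality. You instead characterize the unique maximal order as the integral closure of $\RR\llbracket t\rrbracket$ in $\Sigma_v$, i.e.\ the set of elements with integral reduced trace and norm, and then pin this set down by an explicit valuation computation, with the formal reality of $\RR$ (a nonzero sum of squares of Laurent series has positive leading coefficient, hence even valuation) doing the decisive work. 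The paper's route is the more economical one, since a single criterion check replaces a case-by-case valuation estimate; your route has the advantage of making transparent exactly which arithmetic feature of $\RR$ --- formal reality, equivalently nontriviality of $\mathsf{Br}(\RR\llbrace t\rrbrace)$ --- is responsible for the existence of the extra maximal orders $\OO_{\mathsf{tc}}$ and $\OO_{\mathsf{qt}}$ beyond the algebraically closed pattern. One small gap worth tightening: in part (c) the inference ``$j^2 = \pm t$ is a non-unit of the center, so $j \in \rad(O)$'' is not a valid deduction on its own; you should note that $(j)$ is a two-sided ideal with $(j)^2 \subseteq tO \subseteq \rad(O)$, hence nilpotent modulo $tO$, which combined with $O/(j) \cong \CC$ being a field yields $(j) = \rad(O)$. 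This is the same point the paper handles by invoking hereditariness rather than nilpotence, and it is routine, but the one-line version as written skips it.
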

\begin{proof} Note that for any $v \in 
\bigl\{1, -1, {t}, -{t} \bigr\}$ the algebra $O_v$ defined in the statement of  lemma is an order in the  corresponding skew field $ \Sigma_v$. Their maximality easily follows from the Auslander--Goldman criterion; see  \cite[Theorem 18.4]{ReinerMO}. For example, let  $v = \pm t$. Then $I_v = O_v j = j O_v$ is the Jacobson radical of $O_v$.  It follows that $I_v$ is projective as a left $O_v$-module. Moreover, $F_v = O_v/I_v \cong \CC$ is a field, what implies  the statement. 
\end{proof}

\begin{remark}\label{R:RealOrders} Note that from the point of view of classification of hereditary $\RR$-orders, we may identify $O_{t}$ and $O_{-t}$.
We  have an isomorphism of $\RR$-algebras $O_{\pm t} \cong \CC\llbracket z\rrbracket^{\mathsf{tw}}$; see (\ref{E:twistedseries}) for the definition 
of $\CC\llbracket z\rrbracket^{\mathsf{tw}}$.
\end{remark}

\begin{summary}\label{SummaryRealHereditary} We recapitulate some key facts about real hereditary orders.
\begin{enumerate}
\item[(i)] By a classical Theorem of Frobenius, $\RR$, $\CC$ and $\HH$ are the only finite-dimensional division algebras over $\RR$.
\item[(ii)] The only finite field extension of the field $\RR\llbrace t\rrbrace$ up to $\RR$-linear  isomorphism are $\RR\llbrace w\rrbrace$ or $\CC\llbrace w\rrbrace$; see Lemma \ref{L:FieldExtensions}.
\item[(iii)] It follows from the discussion made in this section that up to $\RR$-linear isomorphisms, the only  maximal scalar local real orders are $\RR\llbracket t\rrbracket$, $\CC\llbracket t\rrbracket$, $\CC\llbracket t\rrbracket^{\mathsf{tw}}$ and $\HH\llbracket t\rrbracket$. Obviously, the  residue skew field of $\RR\llbracket t\rrbracket$ is $\RR$, for $\CC\llbracket t\rrbracket$ and  $\CC\llbracket t\rrbracket^{\mathsf{tw}}$ it is $\CC$ and for  $\HH\llbracket t\rrbracket$ it is $\HH$.
\item[(iv)] The orders $\RR\llbracket t\rrbracket$ and $\CC\llbracket t\rrbracket$ are commutative. The center of $\CC\llbracket t\rrbracket^{\mathsf{tw}}$ is $\RR\llbracket t^2\rrbracket$, whereas the center of $\HH\llbracket t\rrbracket$ is $\RR\llbracket t\rrbracket$. In all these four cases, the element $t$ generates the Jacobson radical of the corresponding maximal order.
\item[(v)] It is easy to see that the field automorphism of $\CC$ induced by 
$\CC\llbracket t\rrbracket^{\mathsf{tw}} \xrightarrow{\mathsf{Ad}_t} \CC\llbracket t\rrbracket^{\mathsf{tw}}$ is given by the complex conjugation. 
\item[(vi)] An explicit classification  of  hereditary $\RR$-orders is now provided by Theorem \ref{T:StandardOrder} and  Theorem \ref{T:HeredOrders} of Harada. A description of the corresponding automorphism groups  and their actions modulo the radical follows from Corollary \ref{C:SummaryAutomorphisms}. 
\end{enumerate}
\end{summary}

\section{Classification of semi-simple nodal pairs over an arbitrary field}\label{S:SemisimpleNodal}
Let $\kk$ be a field and $\phi, \widetilde\phi: \Lambda \lar \Gamma$ be two homomorphisms of finite-dimensional $\kk$-algebras. We say that $\phi$ and $\widetilde\phi$ are \emph{similar} (and denote $\phi \sim \widetilde\phi$) if there exists $b \in \Gamma$ such that 
$\widetilde\phi(a) = \mathsf{Ad}_b\bigl(\phi(a)\bigr) = b \phi(a) b^{-1}$ for any $a \in \Lambda$. 

Now assume that $\Lambda  \stackrel{\imath}\lar \Gamma$ is an injective homomorphism of finite-dimensional semi-simple $\kk$-algebras. Following Definition  \ref{nod}, we say that $\imath$ is a nodal embedding if $\bigl(\imath(A), H\bigr)$ is a nodal pair. Of course, if  $A_k \stackrel{\imath_k}\lar H_k$ are nodal nodal embeddings for $k = 1, 2$ then $A_1 \times A_2 \xrightarrow{\imath_1 \times \imath_2} H_1 \times H_2$ is a nodal embedding, too. The goal of this section is to classify all nodal embeddings into a given finite-dimensional semi-simple $\kk$-algebra $\Gamma$ up to similarity.

Let $K$ and $L$ be  finite-dimensional division $\kk$-algebras and $K \stackrel{\imath}\lar  L$ be a homomorphism of $\kk$-algebras.  Obviously, $L$ becomes a $(K$--$K)$-bimodule and 
\begin{equation}\label{E:Length}
\ell_K(L) := \ell_K\bigl(L_K\bigr) = \ell_K\bigl(_KL\bigr) = \dfrac{\dim_{\kk}(L)}{\dim_{\kk}(K)}.
\end{equation} For any $a \in L$ we have a map $
L \stackrel{\lambda_a}\lar L, \; x \mapsto ax. 
$
Obviously, $\lambda_{ab} = \lambda_a \lambda_b$ for any $a, b \in L$. Moreover, $\lambda_a$ is $K$-linear with respect to the right action of $K$ on $L$. 

Let $n = \ell_K(L)$ 
and  $\mathfrak{B}$ be a basis of $L_K$. Then we get 
an injective homomorphism of $\kk$-algebras $L \xrightarrow{\lambda(\imath, \mathfrak{B})}  M_n(K)$ called \emph{regular embedding}, given by the composition 
$$
L \stackrel{\lambda}\lar \End_K(L_K) \stackrel{\beta^\mathfrak{B}}\lar M_n(K),
$$
where $\lambda(a) = \lambda_a$ for any $a \in L$ and $\beta^\mathfrak{B}(f) = [f]^\mathfrak{B}$ is the matrix of the endomorphism $f \in \End_K(L_K)$ with respect to the basis $\mathfrak{B}$. If $\mathfrak{B}'$ is another basis of $L_K$ then we obviously have: $\lambda(\imath, \mathfrak{B}) \sim 
\lambda(\imath, \mathfrak{B}')$. 

\begin{proposition}\label{P:regular1} Let $K \stackrel{\imath}\lar L$ be a homomorphism of finite-dimensional division $\kk$-algebras, $n = \ell_K(L)$, $\mathfrak{B} = (v_1, \dots, v_n)$ be a basis of $L_K$ and $L \xrightarrow{\lambda(\imath, \mathfrak{B})}  M_n(K)$ be the associated regular embedding. Then the following statements are true. 
\begin{enumerate}
\item[(i)] Let $K \stackrel{\tilde\imath}\lar L$ be another homomorphism of $\kk$-algebras such that $\tilde\imath \sim \imath$ and $\widetilde{\mathfrak{B}}$ be a basis of $L_K$ with respect to the module structure given by $\tilde\imath$. Then we have: $\lambda(\imath, \mathfrak{B}) \sim \lambda(\tilde\imath, \widetilde{\mathfrak{B}})$. 
\item[(ii)] Let $S = K^n$ and $M = M_n(K)$. Then $S$ has a standard  $(M$--$K)$-bimodule structure, which restricts to a $(L$--$K)$-bimodule structure given by the inclusion 
$\lambda(\imath, {\mathfrak{B}})$. Moreover, the map
$
S \stackrel{\zeta^{\mathfrak{B}}}\lar L, e_i \mapsto v_i \; \mbox{\rm for} \; 1 \le i \le n,
$
is an isomorphism of $(L$--$K)$-bimodules, where $(e_1, \dots, e_n)$ is the standard basis of $S=K^n$. 
\item[(iii)] The following diagram of $\kk$-algebras is commutative:
\begin{equation}
\begin{array}{c}
\xymatrix{
K^\circ \ar[r]^-\rho_-{\cong} \ar[d]_= & 
\End_M(S) \ar@{^{(}->}[r]^-{\jmath} & \End_L(S) \ar[d]^-{\widetilde\zeta^\mathfrak{B}}_-{\cong}  \\
K^\circ  \ar@{^{(}->}[r]^-\imath & L^\circ  \ar[r]^-\varrho_-{\cong}  & \End_L(L).
}
\end{array}
\end{equation}
Here $\jmath$ is the canonical restriction map, $\rho_a(v) = va$ for any $v \in S$ and $a \in K$, $\varrho_b(c) = cb$ for any $b, c \in L$ and $\widetilde\zeta^\mathfrak{B}(f)= \mathsf{Ad}_{\zeta^\mathfrak{B}}(f)$ for any $f \in \End_L(S)$. 
\item[(iv)] Let $C$ be the center of $K$, $\widetilde{C} = \imath(C) \subseteq L$ and $D$ be the center of $L$. 
\begin{enumerate}
\item If $\widetilde{C} \subseteq D$ then $L \xrightarrow{\lambda(\imath, \mathfrak{B})}  M_n(K)$ is a homomorphism of $C$-algebras. 
\item Suppose that $D \subseteq \widetilde{C}$ and $\widetilde{D} = \imath^{-1}(D) \subseteq K$. Then $L \xrightarrow{\lambda(\imath, \mathfrak{B})}  M_n(K)$  is a homomorphism of $D$-algebras. 
\end{enumerate}
\end{enumerate}
\end{proposition}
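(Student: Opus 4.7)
The four items all amount to bookkeeping with the right $K$-module structure on $L$ induced by $\imath$, so the plan is to fix notation and then read each statement directly off the definition of the regular embedding $\lambda(\imath, \mathfrak{B})$. The only step that I expect to require real care is part (i), where two different right $K$-module structures on the same underlying set $L$ must be compared; the remaining parts are essentially translations of the definition.

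For (i), choose $b \in L^{\ast}$ with $\widetilde\imath = \Ad_b \circ \imath$. A direct computation shows that the $\kk$-linear map $L \to L$, $x \mapsto xb$, satisfies $(x \cdot_{\widetilde\imath} a)\,b = (xb) \cdot_{\imath} a$ for all $x \in L$ and $a \in K$, and is therefore an isomorphism of right $K$-modules from $(L, \widetilde\imath)$ to $(L, \imath)$. Consequently, $(w_1, \dots, w_n) := (\widetilde v_1 b, \dots, \widetilde v_n b)$ is a basis of $L$ as a right $K$-module via $\imath$. The identity $a \widetilde v_i = \sum_j \widetilde v_j \widetilde\imath(M_{ji})$ translates, after right multiplication by $b$, into $a w_i = \sum_j w_j \imath(M_{ji})$, so the matrix of $\lambda_a$ in $\widetilde{\mathfrak{B}}$ via $\widetilde\imath$ coincides with its matrix in $(w_1, \dots, w_n)$ via $\imath$. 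Since any two bases of $L_K$ yield similar regular embeddings by the ordinary change-of-basis formula, we obtain $\lambda(\widetilde\imath, \widetilde{\mathfrak{B}}) \sim \lambda(\imath, (w_i)) \sim \lambda(\imath, \mathfrak{B})$.

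For (ii), $\zeta^{\mathfrak{B}}$ is the unique right $K$-linear extension of $e_i \mapsto v_i$, and it is bijective because $\mathfrak{B}$ is a basis. Its left $L$-linearity amounts to the identity $\zeta^{\mathfrak{B}}(\lambda(\imath, \mathfrak{B})(a) \cdot e_i) = a v_i$ for every $a \in L$, which is a tautological reformulation of the definition of $\lambda(\imath, \mathfrak{B})$. For (iii), traversing the diagram in the two possible ways applied to $a \in K^{\circ}$ produces the endomorphisms $\zeta^{\mathfrak{B}} \circ \rho_a \circ (\zeta^{\mathfrak{B}})^{-1}$ and $\varrho_{\imath(a)}$ of $L$. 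Writing an arbitrary $v \in L$ as $v = \sum v_i \imath(b_i)$ and using (ii), the first endomorphism sends $v$ to $\sum v_i \imath(b_i a) = v\,\imath(a) = \varrho_{\imath(a)}(v)$, so the two paths agree.

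For (iv)(a), any $c \in C$ satisfies $\imath(c) \in \widetilde C \subseteq D$, whence $\lambda_{\imath(c)}(v_i) = v_i \imath(c)$, and therefore $\lambda(\imath, \mathfrak{B})(\imath(c)) = c \cdot \mathbbm{1}_n$, which is exactly the scalar action of $C$ on $M_n(K)$. For (iv)(b), the hypothesis $D \subseteq \widetilde C$ together with the injectivity of $\imath$ forces $\widetilde D \subseteq C$ and makes $\imath$ restrict to a bijection $\widetilde D \to D$; transporting the scalar action of $\widetilde D \subseteq C = Z(M_n(K))$ through this bijection endows $M_n(K)$ with a $D$-algebra structure, and the computation of (a) applied to $d = \imath(c) \in D$ gives $\lambda(\imath, \mathfrak{B})(d) = \imath^{-1}(d) \cdot \mathbbm{1}_n$, which is precisely the $D$-linearity required.
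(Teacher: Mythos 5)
Your proof is correct and follows essentially the same route as the paper. For (i) the paper makes the specific choice of basis $\bigl(\mathsf{Ad}_b(v_1),\dots,\mathsf{Ad}_b(v_n)\bigr)$ and produces the conjugating matrix $X = \lambda(\imath,\mathfrak{B})(b)$ directly, then appeals silently to the change-of-basis remark; you instead factor through the right $K$-module isomorphism $x \mapsto xb \colon (L,\tilde\imath) \to (L,\imath)$, which gives a cleaner explanation of why the two regular embeddings literally coincide on the transported basis, and then invoke change of basis. These are the same argument in slightly different packaging. Parts (ii)--(iv) are the straightforward verifications the paper indicates, and your explicit computations (in particular $\widetilde D \subseteq C$ in (iv)(b) via injectivity of $\imath$) fill in precisely the steps the paper leaves implicit.
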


\begin{proof} (i) Let $b \in L^\ast$ be such that $\tilde{\imath}(a) = 
\mathsf{Ad}_b\bigl(\imath(a)\bigr)$ for all $a \in K$. First note that  $\widetilde{\mathfrak{B}} = \bigl(\mathsf{Ad}_b(v_1), \dots, \mathsf{Ad}_b(v_n) \bigr)$ is a basis of $L_K$ with respect to the $K$-module structure induced by $\tilde\imath$. Let $\lambda = \lambda(\imath, \mathfrak{B})$ and $\widetilde\lambda =  \lambda(\tilde\imath, \widetilde{\mathfrak{B}})$. A straightforward computation shows that
$$
\lambda(\tilde\imath, \widetilde{\mathfrak{B}})(a)  = X^{-1} \lambda(\imath, \mathfrak{B})(a) X \; \mbox{\rm for all} \; a \in L,
$$
where $X = \lambda(\imath, \mathfrak{B})(b)$. This shows that $\lambda(\imath, \mathfrak{B}) \sim \lambda(\tilde\imath, \widetilde{\mathfrak{B}})$, as asserted. 

\smallskip
\noindent
Proofs of the statements (ii) and (iii) are straightforward. 

\smallskip
\noindent
(iv) Suppose that $\widetilde{C} \subseteq D$. Then for any $a \in C$ and $1 \le j \le n$ we have $\imath(a) v_j = v_j \imath(a)$. It follows that  
$\lambda(\imath, \mathfrak{B})\bigl(\imath(a)\bigr) = a I$, where $I \in M_n(K)$ is the identity matrix. Hence, $L \xrightarrow{\lambda(\imath, \mathfrak{B})}  M_n(K)$ is a homomorphism of $C$-algebras, as asserted. 

In a similar vein, if $D \subseteq \widetilde{C}$ then $\lambda(\imath, \mathfrak{B})\bigl(\imath(a)\bigr) = a I$ for any $a \in \widetilde{D}$. As a consequence, $L \xrightarrow{\lambda(\imath, \mathfrak{B})}  M_n(K)$ is a homomorphism of $D$-algebras.
\end{proof}

\begin{proposition}\label{P:Key} 
Let $K, L$ be finite-dimensional division $\kk$-algebras and $L \stackrel{\delta}\lar   M_n(K)$ be a homomorphism of $\kk$-algebras for some $n \in \NN$. Then the  simple left $M_n(K)$-module  $S = K^n$ is also simple over $L$
if and only if $\dim_{\kk}(L) = n \cdot \dim_{\kk}(K)$. From now on assume this is the case. Then the following statements are true.  
\begin{enumerate}
\item[(i)] For any $0 \ne v \in S$ the map
$
L \stackrel{\xi_v}\lar S, \; a \mapsto \delta(a) v 
$
is an isomorphism of  $(L$--$K)$-bimodules. Next, there is a unique homomorphism of $\kk$-algebras $\gamma = \gamma(\delta, v): K \lar L$ (which is automatically injective) making the following diagram 
\begin{equation}\label{E:Return}
\begin{array}{c}
\xymatrix{
K^\circ \ar[r]^-\rho_-{\cong} \ar@{_{(}->}[d]_-\gamma & 
\End_M(S) \ar@{^{(}->}[r]^-{\jmath} & \End_L(S)   \\
L^\circ    \ar[rr]^-\varrho_-{\cong}  & &  \End_L(L) \ar[u]_-{\widetilde\xi_v}^-{\cong}
}
\end{array}
\end{equation}
commutative.
Here, $M = M_n(K)$, $\widetilde{\xi}_v(f) = \mathsf{Ad}_{\xi_v}(f)$ for any $f \in \End_L(L)$  and $\rho, \varrho$ and $\jmath$ are defined in the same way as in Proposition \ref{P:regular1}. Moreover, $\ell_K(L) = n$. 
\item[(ii)] For any $0 \ne v' \in S$ we have: $\gamma(\delta, v) \sim \gamma(\delta, v')$. More generally, let $L \stackrel{\tilde\delta}\lar   M_n(K)$ be another  homomorphism of $\kk$-algebras such that $\delta \sim \widetilde\delta$. Then $\gamma(\delta, v) \sim \gamma(\widetilde\delta, \widetilde{v})$ for any $0 \ne v, \widetilde{v} \in S$. 
\item[(iii)] Let $C$ be the center of $K$, $\widetilde{C} = \gamma(C) \subseteq L$ and $D$ be the center of $L$. 
\begin{enumerate}
\item If $\widetilde{C} \subseteq D$ then $L \stackrel{\delta}\lar   M_n(K)$ is a homomorphism of $C$-algebras. 
\item Suppose that $D \subseteq \widetilde{C}$ and $\widetilde{D} = \gamma^{-1}(D) \subseteq K$. Then  $L \stackrel{\delta}\lar   M_n(K)$ is a homomorphism of $D$-algebras. 
\end{enumerate}
\end{enumerate}
\end{proposition}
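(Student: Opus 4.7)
The plan is to exploit that $L$, being a division algebra, makes every left $L$-module isomorphic to a direct sum of copies of $L$. Endowing $S = K^n$ with the $L$-module structure induced by $\delta$, we have $S \cong L^m$ for some $m \geq 1$, so $n \dim_\kk K = \dim_\kk S = m \dim_\kk L$, and $S$ is simple over $L$ if and only if $\dim_\kk L = n \dim_\kk K$. For (i), the map $\xi_v$ is left $L$-linear by construction; its kernel is a left ideal in the division algebra $L$, which is proper because $\xi_v(1) = v \neq 0$ and hence zero, while surjectivity follows from $L v = S$. The right $K$-action on $S$ (entrywise scalar multiplication) commutes with the left $M_n(K)$-action, so after restriction via $\delta$ it commutes with the left $L$-action, and transporting it to $L$ through $\xi_v$ upgrades $\xi_v$ to an $(L$--$K)$-bimodule isomorphism. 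The resulting right $K$-action on $L$, being $L$-linear, is a ring homomorphism $K^\circ \to \End_L(L)$; composing with $\varrho^{-1}$ yields the desired $\gamma = \gamma(\delta, v)$, and (\ref{E:Return}) commutes tautologically. Injectivity of $\gamma$ is automatic since $K$ is a division algebra and $\gamma(1) = 1$, and $\ell_K(L) = \dim_\kk(L)/\dim_\kk(K) = n$.

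For (ii), first fix $\delta$ and vary $v$: simplicity provides $c \in L^\ast$ with $v' = \delta(c) v$, whence $\xi_{v'} = \xi_v \circ \rho_c$. Plugging this into the defining identity $\xi_{v'}(a\,\gamma(\delta, v')(k)) = \xi_{v'}(a) \cdot k$ and comparing with the analogous identity for $\xi_v$ gives $\gamma(\delta, v') = \mathsf{Ad}_c \circ \gamma(\delta, v)$. Next, if $\widetilde\delta = \mathsf{Ad}_X \circ \delta$ for some $X \in M_n(K)^\ast$ and $\widetilde v \in S$ is non-zero, setting $v := X^{-1} \widetilde v$ yields $\xi^{\widetilde\delta}_{\widetilde v}(a) = X \xi^\delta_v(a)$; because left multiplication by $X$ commutes with the right $K$-action on $S$, an unwinding of the defining identities forces $\gamma(\widetilde\delta, \widetilde v) = \gamma(\delta, v)$, which is similar to $\gamma(\delta, v')$ for any non-zero $v'$ by the preceding case.

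For (iii)(a), we must verify $\delta(\gamma(c)) = c I_n$ for every $c \in C$. Setting $a = 1$ in the defining relation yields $\delta(\gamma(k)) v = v \cdot k$ for all $k \in K$, so for $c \in C$ we obtain $\delta(\gamma(c)) v = v c = c v$, where the second equality uses that $c$ commutes with the entries of $v$. To extend this identity from $v$ to the whole of $S$, note that $\gamma(c) \in \widetilde C \subseteq D$ is central in $L$, so for any $w = \delta(x) v \in S$ one computes
\begin{equation*}
\delta(\gamma(c)) w = \delta(\gamma(c) x) v = \delta(x \gamma(c)) v = \delta(x) \delta(\gamma(c)) v = \delta(x) c v = c \delta(x) v = c w,
\end{equation*}
the final step using that the scalar matrix $c I_n$ is central in $M_n(K)$. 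Surjectivity of $\xi_v$ now gives $\delta(\gamma(c)) = c I_n$, which is (a). Statement (b) follows immediately: the hypothesis $D \subseteq \widetilde C$ together with injectivity of $\gamma$ gives $\widetilde D := \gamma^{-1}(D) \subseteq C$, so (a) yields $\delta(\gamma(\widetilde d)) = \widetilde d I_n \in Z(M_n(K))$ for every $\widetilde d \in \widetilde D$; thus $\delta$ sends the central subalgebra $D = \gamma(\widetilde D) \subseteq L$ into the centre of $M_n(K)$, which is precisely the assertion that $\delta$ is a homomorphism of $D$-algebras.

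The main obstacle lies in part (ii), where one must carefully track how both the choice of $v$ and the choice of $\delta$ (up to inner automorphisms of $M_n(K)$) enter the construction of $\gamma$ through the bimodule isomorphism $\xi_v$; once this bookkeeping is in place, part (iii) reduces to a short calculation using the defining relation of $\gamma$ and the centrality assumptions on $\widetilde C$ or $D$.
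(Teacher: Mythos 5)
Your proof is correct and follows essentially the same route as the paper's: the identical dimension count for the simplicity criterion, the identical construction of $\gamma$ as the composite around the square (\ref{E:Return}), and the same bookkeeping through the defining identity $\xi_v(b\gamma(a)) = \xi_v(b)\,a$ to handle both halves of (ii). The only noticeable variation is in (iii): you promote the pointwise identity $\delta(\gamma(c))v = cv$ to an equality of operators on all of $S$ by a short direct computation using centrality of $\gamma(c)$ in $L$ and of $c$ in $K$, and in (iii)(b) you make explicit the inclusion $\widetilde{D}\subseteq C$ (deduced from injectivity of $\gamma$) before rerunning the argument of (a) — both of these are slightly tighter renditions of the steps the paper carries out via bijectivity of $\xi_v$, and they are entirely compatible with the paper's approach.
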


\begin{proof} Note that $S$ has a natural structure of an $(L$--$K)$-bimodule. It follows that
$$
\dim_{\kk}(S) = \ell_L(_LS) \cdot \dim_{\kk}(L) = \ell_K(S_K) \cdot \dim_{\kk}(K). 
$$
Since  $\ell_K(S_K) = n$, we conclude that  $\ell_L(_LS) = 1$  if and only if  $\dim_{\kk}(L) = n \cdot \dim_{\kk}(K)$.

\smallskip
\noindent
(i) The statement that $\xi_v$ is an isomorphism is obvious. 
Next,  we define $\gamma = \gamma(\delta, v): K \lar L$ as the composition of homomorphisms 
$$
\begin{array}{c}
\xymatrix{
K^\circ \ar[r]^-\rho  & 
\End_M(S) \ar@{^{(}->}[r]^-{\jmath} & \End_L(S)   \ar[r]^-{\widetilde\xi_v^{-1}} & 
\End_L(L) \ar[r]^-{\varrho^{-1}} & L^\circ.    
}
\end{array}
$$
Uniqueness of $\gamma$  is obvious. Note that commutativity of (\ref{E:Return}) is equivalent to the identity
\begin{equation}\label{E:KeyIdentity}
\bigl(\delta(b) v\bigr) a = \delta\bigl(b \gamma(a)\bigr) v \; \mbox{\rm for all} \; a\in K \; \mbox{\rm and} \;  b \in L.
\end{equation}
The formula $\ell_K(L) = n$ is a consequence of  (\ref{E:Length}). 

\smallskip
\noindent
(ii) Since $S$ is a simple $L$-module, for any $0 \ne v' \in S$ there exists a unique 
$c \in L$ such that $v' = \delta(c) v$. Let $\gamma'= \gamma(\delta, v')$. Replacing in (\ref{E:KeyIdentity}) the element  $b$ by $bc$ and writing the  analogue expression  of (\ref{E:KeyIdentity}) for $\gamma'$, we get:
$$
\bigl(\delta(b c) v\bigr) a = \delta\bigl(b c \gamma(a)\bigr) v \;
\mbox{\rm and} \; \bigl(\delta(b) \delta(c) v\bigr) a = \delta\bigl(b \gamma'(a)\bigr) \gamma(c) v \; \mbox{\rm for all} \; a \in K, b \in L. 
$$
Since $L \stackrel{\xi_v}\lar S$ is a bijection, we get:  $c \gamma(a) = \gamma'(a) c$ for any $a \in L$. It follows that  $\gamma \sim \gamma'$, as asserted. 

Now, let 
$L \stackrel{\widetilde\delta}\lar   M_n(K)$ be another  homomorphism of $\kk$-algebras such that $\delta \sim \widetilde\delta$. Then there exists $d \in \mathsf{GL}_n(K)$ such that $d \delta(b) = \widetilde\delta(b) d$ for all $b \in L$. We put $\widetilde{v} := d v$ and $\widetilde\gamma = \gamma(\widetilde\delta, \widetilde{v})$. Then the identities
(\ref{E:KeyIdentity}) written both for $\gamma$ and $\widetilde\gamma$ imply that
$$
\delta\bigl(b \gamma(a)\bigr) v = \delta\bigl(b \widetilde\gamma(a)\bigr) v \; \mbox{\rm for all} \, a \in K, b \in L. 
$$
It follows that $\gamma = \widetilde\gamma$, implying the statement. 

\smallskip
\noindent
(iii) Suppose that $\widetilde{C} \subseteq D$. In order to show that $L \stackrel{\delta}\lar M_n(K)$ is a homomorphism of $C$-algebras, it is sufficient to prove  that $\delta\bigl(\gamma(a)\bigr) = a I$ for any $a \in C$, where $I \in M_n(K)$ is the identity matrix. 
For this note that   we have the following identities 
\begin{equation}\label{E:Hilfsaussage}
\delta\bigl(\gamma(a)\bigr) v = v a = (aI) v
\end{equation}
in the $(M_n(K)$--$K)$-bimodule $S$.
The first equality follows from (\ref{E:KeyIdentity}) for $b = 1$ and the second is a consequence of the fact that $a$ belongs to the center of $K$. Since
$\delta\bigl(\gamma(a)\bigr), a I \in \delta(L)$ and $L \stackrel{\xi_v}\lar S$ is an isomorphism, we conclude from (\ref{E:Hilfsaussage}) that $\delta\bigl(\gamma(a)\bigr) = a I$ for any $a \in  C$, as asserted.

In the case $D \subseteq \widetilde{C}$ we proceed in a similar way. By the same argument,   the identities (\ref{E:Hilfsaussage}) are true for any $a \in \widetilde{D}$, implying that 
$\delta\bigl(\gamma(a)\bigr) = a I$. This shows that   $L \stackrel{\delta}\lar   M_n(K)$ is a homomorphism of $D$-algebras. 
\end{proof}

\begin{lemma}\label{L:Cles} Let $K$ and $L$ be finite-dimensional division $\kk$-algebras such that $\dim_{\kk}(L) = 2 \dim_{\kk}(K)$  and $L \stackrel{\delta}\lar   M_2(K)$ be any homomorphism of $\kk$-algebras. For any $0  \ne v \in K^2$,  let $\gamma = \gamma(\delta, v): K \lar  L$ be the homomorphism of $\kk$-algebras constructed in Proposition \ref{P:Key}. As before, let $C$ be the center of $K$, $\widetilde{C} = \gamma(C)$ and $D$ be the center of $L$. Then the following statements are true.
\begin{enumerate}
\item[(i)] We have: either $D \subset \widetilde{C}$ or $\widetilde{C} \subseteq D$.
\item[(ii)] For any basis $\mathfrak{B}$ of $L_K$ let $\lambda = \lambda(\gamma, \mathfrak{B}): L \lar M_2(K)$ be the homomorphism of $\kk$-algebras constructed in Proposition \ref{P:regular1}. Then $\lambda$ and $\delta$ 
 are similar.
\end{enumerate}
\end{lemma}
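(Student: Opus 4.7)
The plan is to treat both parts as consequences of the rigidity of the simple left $L$-module structure on $S = K^2$, which has length one over $L$ by the dimension hypothesis and Proposition \ref{P:Key}; this forces $\widetilde{C}$ and $D$ to be tightly constrained subfields of $L$, and forces $\delta$ to coincide with $\lambda$ up to a change of basis.

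For part (i), I would set $\widetilde{K} = \gamma(K) \subseteq L$, so that $L$ has left $\widetilde{K}$-dimension $2$ and $\widetilde{C} = Z(\widetilde{K})$. The key observation is that $D \subseteq \widetilde{C}$ if and only if $D \subseteq \widetilde{K}$: the reverse implication is trivial, while the forward one follows because $D$ centralises $L$, and hence $D \cap \widetilde{K}$ lies in $Z(\widetilde{K}) = \widetilde{C}$. So I may assume $D \not\subseteq \widetilde{K}$ and pick $d \in D \setminus \widetilde{K}$. Then $1, d$ are $\widetilde{K}$-linearly independent (any relation would force $d \in \widetilde{K}$), hence form a $\widetilde{K}$-basis, giving $L = \widetilde{K} \oplus \widetilde{K} d$. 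Now for any $c \in \widetilde{C}$, the element $c$ commutes with $\widetilde{K}$ by definition and with $d$ because $d$ is central in $L$; hence $c$ commutes with every element of $L$, so $c \in D$. This yields $\widetilde{C} \subseteq D$.

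For part (ii), I would compare the two $(L, K)$-bimodule structures on $K^2$ produced by $\delta$ and $\lambda$. By Proposition \ref{P:regular1}(ii), the bimodule $(K^2, \lambda)$ is isomorphic, via the basis $\mathfrak{B}$, to the standard bimodule $L$ with left action by multiplication and right $K$-action via $\gamma$. By Proposition \ref{P:Key}(i), the map $\xi_v \colon L \to K^2$, $a \mapsto \delta(a)v$, is an $(L, K)$-bimodule isomorphism from this same standard bimodule onto $(K^2, \delta)$; the compatibility with the right $K$-action is precisely the identity $\delta(\gamma(a))v = va$, i.e.\ the case $b = 1$ of (\ref{E:KeyIdentity}). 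Composing these two identifications produces an $(L, K)$-bimodule isomorphism between $(K^2, \lambda)$ and $(K^2, \delta)$. Since right $K$-linear automorphisms of $K^2$ are precisely left multiplications by matrices in $\mathsf{GL}_2(K)$, this isomorphism is given by some $g \in \mathsf{GL}_2(K)$, and intertwining the two left $L$-actions translates to $\delta(a) = g \lambda(a) g^{-1}$ for every $a \in L$; hence $\delta \sim \lambda$.

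The main point that requires care is the verification in (ii) that $\xi_v$ is genuinely a right $K$-module map, which is exactly where $\gamma$ was constructed from $\delta$ in Proposition \ref{P:Key}; once that is granted, the argument is essentially forced by Wedderburn-style uniqueness of simple modules over a division algebra together with the identification of right $K$-linear automorphisms of $K^2$ with $\mathsf{GL}_2(K)$.
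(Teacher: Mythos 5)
Your proof is correct. Part (i) is essentially the paper's argument, slightly streamlined: the paper takes $c \in D \setminus \widetilde C$, observes $c \notin \widetilde K := \gamma(K)$, and establishes the stronger decomposition $D = \widetilde C \oplus \widetilde C c$; you instead note that $D \subseteq \widetilde C$ if and only if $D \subseteq \widetilde K$, pick $d \in D \setminus \widetilde K$, and verify directly that every element of $\widetilde C$ commutes with $L = \widetilde K \oplus \widetilde K d$, which is all that is needed. Part (ii) is where you take a genuinely different route. The paper deduces (ii) from (i) via the Skolem--Noether theorem and its ``dual'': if $\widetilde C \subseteq D$, both $\lambda$ and $\delta$ are $C$-algebra homomorphisms by Propositions \ref{P:regular1}(iv) and \ref{P:Key}(iii), and classical Skolem--Noether applies; if $D \subset \widetilde C$, both are $D$-algebra homomorphisms and \cite[Theorem 4.4.4]{DrozdKirichenko} applies. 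Your bimodule argument bypasses both part (i) and Skolem--Noether entirely: composing the $(L,K)$-bimodule isomorphisms $\zeta^{\mathfrak B}$ of Proposition \ref{P:regular1}(ii) and $\xi_v$ of Proposition \ref{P:Key}(i) yields a right $K$-linear automorphism of $K^2$, hence an element $g \in \mathsf{GL}_2(K)$, and left $L$-equivariance then reads $\delta = \Ad_g \circ \lambda$. This is more direct and even constructive, exhibiting the conjugating matrix explicitly rather than invoking an abstract existence theorem. The one point requiring care, which you correctly isolate and address, is that the right $K$-module structure on $L$ used in Proposition \ref{P:regular1}(ii) (via $\imath = \gamma$) must agree with the one implicit in Proposition \ref{P:Key}(i) (via $\gamma(\delta,v)$), so that the two isomorphisms genuinely compose as $(L,K)$-bimodule maps.
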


\begin{proof} (i) First note that $\ell_K(L) = 2$. 
Next, suppose  that $D \setminus \widetilde{C} \ne \emptyset$. Take any element  $c \in D \setminus \widetilde{C}$. It is easy to see that $c \notin \widetilde{K} := \gamma(K)$, hence $L = \widetilde{K} \dotplus \widetilde{K}c$.

\smallskip
\noindent
\underline{Claim}. We have: $D = \widetilde{C} \dotplus \widetilde{C}c$. 

\smallskip
\noindent
We first show that $\widetilde{C} \dotplus \widetilde{C}c \subseteq D$. Indeed, let $a_1, b_1 \in \widetilde{C}$ be any elements. Then for any $d \in L$ there exists unique $a_2, b_2 \in \widetilde{K}$ such that $d = a_2 + c b_2$. Since  $[a_1, a_2] = [b_1, b_2]  = 0$, $[a_1, b_2] = [b_1, a_2] = 0$ and $c \in D$, we have:
$$
\bigl[a_1 + c b_1, d \bigr] = \bigl[a_1 + c b_1, a_2 + c b_2\bigr] = 0.
$$
It follows that, $a_1 + c b_1 \in D$, as asserted. 

Conversely, let $b \in L$. Then we can find unique $x, y \in \widetilde{K}$ such that $b = x + yc$. For any $a \in L$ we have:
\begin{equation}\label{E:Decompositions}
ab = ax + ay c \quad \mbox{\rm and} \quad ba = xa + yca = xa + ya c,
\end{equation}
as $c$ is an element of the center of $L$. If $a \in \widetilde{K}$ then $ax, xa, ay, ya \in \widetilde{K}$. Assume now that $b \in D$.   We deduce  from the uniqueness of the decomposition of $ab = ba \in L$ in the $K$-basis $(1, c)$ of $L$ that  $ax = xa$ and $ay = ya$. It follows that $x, y \in \widetilde{C}$ and $b \in \widetilde{C} \dotplus \widetilde{C}c$. Hence, 
$D \subseteq  \widetilde{C} \dotplus \widetilde{C}c$ as asserted.

\smallskip
\noindent
(ii) First note that both $C$ and $D$ are finite field extensions of the base field $\kk$. Moreover, $L$ is a \emph{central} simple $D$-algebra and $M_2(K)$ is a \emph{central} simple $C$-algebra. 
\begin{itemize}
\item If $\widetilde{C} \subseteq D$ then both $\lambda, \delta: L \lar M_2(K)$ are homomorphisms of $C$-algebras; see Proposition \ref{P:regular1} and Proposition \ref{P:Key}. It follows from the Skolem--Noether theorem (see \cite[Theorem  4.4.1]{DrozdKirichenko}) that $\lambda$ and $\delta$ are similar. 
\item In the case $D \subset \widetilde{C}$, both $\lambda$ and $\delta$ are 
homomorphisms of $D$-algebras. By the ``dual'' Skolem--Noether theorem (see \cite[Theorem 4.4.4]{DrozdKirichenko}) $\lambda$ and $\delta$ are similar.
\end{itemize}
By the first part, either of the above cases is true. This proves that $\delta$ and $\lambda$ are similar. 
\end{proof}

\begin{corollary}\label{C:Regular} Let $K$ and $L$ be a pair of finite-dimensional division $\kk$-algebras such that $$\dim_{\kk}(L) = 2 \dim_{\kk}(K).$$ 
Then any homomorphism of $\kk$-algebras
$L \stackrel{\delta}\lar M_2(K)$ is similar to a regular embedding associated with  an appropriate  homomorphism of $\kk$-algebras $K \stackrel{\imath}\lar L$. Moreover, this correspondence  induces a bijection
$$
\left\{L \stackrel{\delta}\lar M_2(K)\right\}/\sim \; \; \Longleftrightarrow \; \;  \left\{K \stackrel{\imath}\lar L\right\}/\sim
$$
where the equivalence relations on both sides is given by the similarity relation. 
\end{corollary}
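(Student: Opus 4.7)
The plan is to exhibit the inverse bijection explicitly, using the two constructions $\imath \mapsto \lambda(\imath,\mathfrak{B})$ of Proposition \ref{P:regular1} and $\delta \mapsto \gamma(\delta,v)$ of Proposition \ref{P:Key}, and then check they are mutually inverse on similarity classes. The first assertion of the Corollary — that every $\delta: L \to M_2(K)$ is similar to a regular embedding — is immediate from Lemma \ref{L:Cles}(ii): taking $\gamma := \gamma(\delta,v)$ for any $0 \ne v \in S$ and any basis $\mathfrak{B}$ of $L_K$ (with the $K$-structure induced by $\gamma$), the regular embedding $\lambda(\gamma,\mathfrak{B})$ is similar to $\delta$.

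Next I would introduce the two candidate maps of similarity classes,
\begin{equation*}
\Phi\colon [\delta] \longmapsto \bigl[\gamma(\delta,v)\bigr], \qquad \Psi\colon [\imath] \longmapsto \bigl[\lambda(\imath,\mathfrak{B})\bigr],
\end{equation*}
and verify that they are well defined. Independence of $\Phi$ from the choice of the nonzero vector $v \in S = K^2$ and its compatibility with similarity of $\delta$ is precisely Proposition \ref{P:Key}(ii); independence of $\Psi$ from the basis $\mathfrak{B}$ and its compatibility with similarity of $\imath$ is Proposition \ref{P:regular1}(i).

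The identity $\Psi \circ \Phi = \mathrm{id}$ is nothing but Lemma \ref{L:Cles}(ii), which says $\lambda\bigl(\gamma(\delta,v),\mathfrak{B}\bigr) \sim \delta$. For the reverse identity $\Phi \circ \Psi = \mathrm{id}$, I would fix $\imath\colon K \hookrightarrow L$ and a basis $\mathfrak{B} = (v_1,v_2)$ of $L_K$, set $\delta := \lambda(\imath,\mathfrak{B})$, take $v := e_1 \in S$, and compute $\gamma := \gamma(\delta,v)$ from the key identity (\ref{E:KeyIdentity}) applied with $b=1$, namely $\delta\bigl(\gamma(a)\bigr) e_1 = e_1 \cdot a$. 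Unwinding the definition of the regular embedding, the first column of $\delta(\gamma(a)) = [\lambda_{\gamma(a)}]^{\mathfrak{B}}$ encodes the coordinates of $\gamma(a) v_1$ in the basis $(v_1,v_2)$, so the identity becomes $\gamma(a) v_1 = v_1 \imath(a)$ in $L$. Hence $\gamma = \mathsf{Ad}_{v_1} \circ \imath \sim \imath$, which completes the argument.

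The genuinely delicate step is the last one: one must carefully keep track of which $K$-module structure is being used on $L$ (always the one induced by $\imath$) and on $S$ (the tautological right action by scalar matrices) when extracting the explicit formula $\gamma(a) v_1 = v_1 \imath(a)$ from the commuting diagrams of Propositions \ref{P:regular1}(ii)--(iii) and \ref{P:Key}(i). Once those bimodule identifications are in place, the Skolem--Noether type arguments of Lemma \ref{L:Cles} do all of the heavy lifting, and the bijection drops out.
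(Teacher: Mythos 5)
Your proof is correct and takes essentially the same route as the paper, which asserts the Corollary as a direct consequence of Proposition~\ref{P:regular1}, Proposition~\ref{P:Key} and Lemma~\ref{L:Cles}. The one step not literally contained in those three statements, namely $\Phi\circ\Psi = \mathrm{id}$ (equivalently $\gamma\bigl(\lambda(\imath,\mathfrak{B}), e_1\bigr) \sim \imath$), you extract correctly from the key identity~(\ref{E:KeyIdentity}) with $b=1$, obtaining $\gamma(a)v_1 = v_1\imath(a)$ and hence $\gamma = \mathsf{Ad}_{v_1}\circ\imath$.
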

\begin{proof} This result is a consequence of Proposition \ref{P:regular1}, Proposition \ref{P:Key} and Lemma \ref{L:Cles}. 
\end{proof}

\begin{example}\label{E:Real} Let $\kk = \RR$. Then the set of the isomorphism classes of finite-dimensional division $\RR$-algebras is
$
\bigl\{\RR, \CC, \HH\bigr\}.
$
Moreover, up to conjugacy, there are precisely two embeddings of the type described in Corollary \ref{C:Regular}:
\begin{equation}\label{E:regular1}
\CC \cong \left. \left\{
\left(\begin{array}{cc}
a & -b \\
b & a
\end{array}\right) \right| a, b\in \RR
\right\} \subset M_2(\RR)
\end{equation}
and
\begin{equation}\label{E:regular2}
\HH \cong \left. \left\{
\left(\begin{array}{cc}
z &  w \\
-\bar{w} & \bar{z}
\end{array}\right) \right| z, w\in \CC
\right\} \subset M_2(\CC).
\end{equation}
In what follows, we shall call them just \emph{regular embeddings}. 
\end{example}

\smallskip
\noindent
Before we proceed with a classification of semi-simple nodal pairs over $\kk$, we recall the following result; see \cite[Theorem 3.5]{BurbanDrozdQuotients}.

\begin{theorem}\label{T:NodalCharact}
Let $\Gamma$ be a finite-dimensional semi-simple $\kk$-algebra and $\Lambda \subseteq \Gamma$ be a basic semi-simple $\kk$-subalgebra. Then the following statements are equivalent:
\begin{enumerate}
\item[(i)] $\Gamma$ is generated by at most two elements, viewed as a left $\Lambda$-module. Equivalently, there exists a surjective homomorphism of $\Lambda$-modules
$\Lambda^{{2}}  \twoheadarrow  \Gamma$.
\item[(ii)] $(\Lambda, \Gamma)$ is a semi-simple nodal pair over $\kk$. 
\end{enumerate}
\end{theorem}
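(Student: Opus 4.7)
The plan is to encode both conditions as combinatorial inequalities on the $(\Lambda, \Lambda)$-bimodule decomposition of $\Gamma$. Write $\Lambda = K_1 \times \dots \times K_n$ in its basic semi-simple form with primitive orthogonal idempotents $e_1, \dots, e_n$ and simple left $\Lambda$-modules $U_j := \Lambda e_j \cong K_j$. Set $c_i := \dim_\kk K_i$, $D_{ji} := \dim_\kk(e_j \Gamma e_i)$, and $d_{ji} := \dim_{K_j}(e_j \Gamma e_i) = D_{ji}/c_j$. Since only the $K_j$-component of $\Lambda$ acts nontrivially on $e_j \Gamma e_i$, we have $e_j \Gamma e_i \cong U_j^{d_{ji}}$ as left $\Lambda$-modules; hence $\Gamma \cong \bigoplus_j U_j^{m_j}$ with $m_j = \sum_i d_{ji}$, so condition (i) is equivalent to the row-sum inequality $\max_j \sum_i d_{ji} \le 2$. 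On the other hand $\Gamma \otimes_\Lambda U_i \cong \Gamma e_i$ and $\ell_\Lambda(\Gamma e_i) = \sum_j d_{ji}$, so condition (ii) becomes the column-sum inequality $\max_i \sum_j d_{ji} \le 2$.

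Next I would extract two structural properties of the matrix $(D_{ji})$. Writing $\Gamma = \prod_l M_{n_l}(F_l)$ and decomposing $e_i = \sum_l e_i^{(l)}$ with $r_{il} := \rk(e_i^{(l)})$, one gets $D_{ji} = \sum_l r_{jl} r_{il} \dim_\kk F_l$, which is manifestly symmetric: $D_{ji} = D_{ij}$. Moreover, $e_j \Gamma e_i$ simultaneously carries a left $K_j$- and a right $K_i$-vector space structure, so $D_{ji}$ is divisible by both $c_j$ and $c_i$, and hence by $\mathrm{lcm}(c_i, c_j)$. Finally, the inclusion $\imath(K_i) \subseteq e_i \Gamma e_i$ supplies the crucial lower bound $d_{ii} \ge 1$.

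The heart of the argument, which I expect to be the main obstacle, is the following key lemma: either of (i) or (ii) implies $D_{ji} = 0$ whenever $i \ne j$ and $c_i \ne c_j$. Starting from (i), combining $m_j \le 2$ with $d_{jj} \ge 1$ yields $\sum_{i' \ne j} d_{ji'} \le 1$, so $d_{ji} \le 1$, i.e.\ $D_{ji} \le c_j$, for every $i \ne j$. Using symmetry $D_{ji} = D_{ij}$ together with the analogous bound applied to row $i$ gives $D_{ji} \le c_i$, whence $D_{ji} \le \min(c_i, c_j)$. Divisibility by $\mathrm{lcm}(c_i, c_j) \ge \max(c_i, c_j)$ then forces $D_{ji} = 0$ unless $c_i = c_j$. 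Precisely the same reasoning starts from condition (ii).

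Consequently, either condition forces $(D_{ji})$ to be block-diagonal with respect to the partition of $\{1, \dots, n\}$ by the value of $c_i$. Restricted to one such block, all $c_i$ equal a common constant, so the symmetry of $D$ descends to the symmetry $d_{ji} = d_{ij}$ of the reduced matrix; hence row-sums and column-sums coincide on each block, so (i) and (ii) become identical block by block, and therefore globally equivalent. The real obstacle in this proof is isolating the key lemma cleanly: only the simultaneous use of bimodule symmetry, of both divisibility constraints, and of the lower bound $d_{ii} \ge 1$ rules out the kind of asymmetry between row-sums and column-sums that would otherwise separate these two conditions.
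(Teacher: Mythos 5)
The paper itself does not prove Theorem~\ref{T:NodalCharact}; it cites \cite{BurbanDrozdQuotients} for it, so no line-by-line comparison with the paper's own argument is possible here. Your proof, however, is correct and self-contained, and I have checked each step. The translation of the two conditions into the row-sum bound $\max_j\sum_i d_{ji}\le 2$ and the column-sum bound $\max_i\sum_j d_{ji}\le 2$ on the matrix $d_{ji}=\dim_{K_j}(e_j\Gamma e_i)$ is exact: $\Gamma=\bigoplus_{i,j}e_j\Gamma e_i$ is a decomposition of left $\Lambda$-modules with $e_j\Gamma e_i\cong U_j^{d_{ji}}$, and $\Gamma\otimes_\Lambda U_i\cong \Gamma e_i$ has $\Lambda$-length $\sum_j d_{ji}$. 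The three structural inputs are also correctly identified and proved: the explicit formula $D_{ji}=\sum_l r_{jl}r_{il}\dim_\kk F_l$ giving $D_{ji}=D_{ij}$; divisibility of $D_{ji}$ by both $c_j$ and $c_i$ (hence by their $\mathrm{lcm}$) from the $(K_j,K_i)$-bimodule structure of $e_j\Gamma e_i$; and $d_{ii}\ge 1$ from $K_i=e_i\Lambda e_i\subseteq e_i\Gamma e_i$. The key lemma goes through as you describe: either condition, together with $d_{ii}\ge 1$ and symmetry, yields $D_{ji}\le\min(c_i,c_j)$ for $i\ne j$, and divisibility by $\mathrm{lcm}(c_i,c_j)\ge\max(c_i,c_j)$ then forces $D_{ji}=0$ when $c_i\ne c_j$. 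After that the matrix is block-diagonal in the partition by the value of $c_i$, each block has $d_{ji}=d_{ij}$, and the equivalence of the row- and column-sum bounds follows by transposition.

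One small stylistic remark: the $\mathrm{lcm}$ packaging, while a nice mnemonic, is not strictly needed. If $c_i>c_j$, then already $c_i\mid D_{ji}$ together with $D_{ji}\le c_j<c_i$ forces $D_{ji}=0$, and symmetrically if $c_i<c_j$. Your argument is a tidy combinatorial reduction; it also makes transparent the constraints $r_j\le 2$ and $d_{ji}\in\{0,1\}$ for $i\ne j$ that are exploited in Claims~1--3 of the proof of Theorem~\ref{T:SemiSimpleNodal}, so the approach fits naturally into the paper's overall scheme even if its phrasing differs from the cited proof.
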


\smallskip
\noindent
After all the necessary preparation work has been done, we can prove  the main technical results of this paper. 

\begin{theorem}\label{T:SemiSimpleNodal}
Let $\kk$ be any field and $\Gamma  = \Gamma_1 \times \dots \times \Gamma_m$ be a finite-dimensional semi-simple $\kk$-algebra with  $\Gamma_j = M_{r_j}(F_j)$ for all $ 1\le j \le m$,  where 
$F_j$ is a  division algebra over $\kk$ and $r_j  \in \NN$. Let $\Lambda \stackrel{\phi}\longhookrightarrow \Gamma$ be a nodal embedding of a basic semi-simple $\kk$-algebra $\Lambda$. Then $\phi$ is similar 
  to the  product of the following ``elementary'' nodal embeddings:
\begin{enumerate}
\item $F \stackrel{\mathsf{id}}\lar F$, where $F$ is a finite-dimensional  division algebra over $\kk$.
\item $F \xrightarrow{\mathsf{diag}_\tau} F \times F, \; a \mapsto (a, \tau(a))$, where $F$ is a finite-dimensional  division $\kk$-algebra and $\tau \in \mathsf{Aut}_{\kk}(F)$ is an outer automorphism. 
\item $K \stackrel{\imath}\lar L$, where  $K$ and $L$ are finite-dimensional  division $\kk$-algebras such that $\dim_{\kk}(L) = 2 \dim_{\kk}(K)$.
\item Canonical  embedding $F \times F \stackrel{\mathsf{can}}\lar M_2(F), (a, b) \mapsto 
\left(\begin{array}{cc}
a & 0 \\
0 & b \\
\end{array}
\right)$, where $F$ is a finite-dimensional  division $\kk$-algebra.
\item Regular embedding $L \stackrel{\mathsf{reg}}\lar M_2(K)$ associated with  an extension $K \stackrel{\imath}\lar L$, where $K$ and $L$ are finite-dimensional division $\kk$-algebras such that $\dim_{\kk}(L) = 2 \dim_{\kk}(K)$. 
\end{enumerate}
\end{theorem}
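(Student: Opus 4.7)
The plan is to translate the nodal condition into a combinatorial constraint on a bipartite incidence graph between the primitive central idempotents of $\Lambda$ and $\Gamma$, and then to match each connected component with exactly one of the five elementary embeddings. Write $\Lambda = K_1 \times \cdots \times K_s$ with primitive central idempotents $e_1, \dots, e_s$ and each $K_i$ a finite-dimensional division $\kk$-algebra; similarly $\Gamma = \prod_{j=1}^m \Gamma_j$ with unit idempotents $f_1, \dots, f_m$, and let $\phi_j := \pi_j \circ \phi \colon \Lambda \to \Gamma_j$. Set $k_{ji} := \rk_{F_j}\bigl(\phi_j(e_i)\bigr)$ and build a bipartite graph $\mathcal{G}$ with vertex classes $\{e_i\}$ and $\{f_j\}$, placing an edge $(e_i, f_j)$ whenever $k_{ji} \geq 1$.

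The key observation is that $\Gamma_j \phi_j(e_i) \cong S_j^{\oplus k_{ji}}$ as a left $\Gamma_j$-module, where $S_j = F_j^{r_j}$ is the simple $\Gamma_j$-module. Consequently the simple left $\Lambda$-module $U_i = K_i$ satisfies $\Gamma \otimes_\Lambda U_i \cong \Gamma e_i$ with $\Lambda$-length $\sum_j k_{ji}\,\ell_\Lambda(S_j)$. By Theorem \ref{T:NodalCharact} the nodal hypothesis is equivalent to the system of bounds $\sum_j k_{ji}\,\ell_\Lambda(S_j) \leq 2$ for every $i$. Combined with the elementary identities $r_j = \sum_{i'} k_{ji'}$ and $\ell_\Lambda(S_j) = \sum_{i'} k_{ji'} \dim_{\kk}(F_j)/\dim_{\kk}(K_{i'})$, a short case analysis forces every connected component of $\mathcal{G}$ to have exactly one of three shapes: an isolated edge with $(k_{ji},\,\ell_\Lambda(S_j)) \in \{(1,1),(1,2),(2,1)\}$; a ``cherry'' with one $e_i$ and two $f_j$'s, all weights and lengths equal to $1$; or a ``cherry'' with two $e_i$'s and one $f_j$, weights $1$ and $\ell_\Lambda(S_j) = 2$. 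Larger components are excluded: a vertex $e_i$ of degree $\geq 3$ directly violates the bound, and a vertex $f_j$ of degree $\geq 3$ forces $\ell_\Lambda(S_j) \geq 3$, which again breaks the bound.

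It remains to match each component with one of the five elementary types. The isolated edges with $(k_{ji},\ell_\Lambda(S_j))$ equal to $(1,1)$, $(1,2)$, $(2,1)$ give types (1), (3), (5) respectively: the first is a $\kk$-algebra isomorphism $K_i \cong F_j$ forced by the dimension identity; the second is a degree-two embedding $K \hookrightarrow L$; the third is an embedding $K_i \hookrightarrow M_2(F_j)$ with $\dim_{\kk}(K_i) = 2\dim_{\kk}(F_j)$, which is similar to a regular embedding by Corollary \ref{C:Regular}. The $1$-to-$2$ cherry yields $K_i \hookrightarrow F_{j_1} \times F_{j_2}$ with both projections $\kk$-algebra isomorphisms, hence, after identifying all three division algebras via the first projection, of type (2) with residual twist $\tau \in \Aut_{\kk}(K_i)$. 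The $2$-to-$1$ cherry yields $K_{i_1} \times K_{i_2} \hookrightarrow M_2(F_j)$; conjugating by a suitable element of $\GL_2(F_j)$ to diagonalize the orthogonal rank-$1$ idempotents $\phi(e_{i_l})$ (Skolem--Noether) puts the embedding in canonical form, giving type (4). Taking the product over connected components exhibits $\phi$ as similar to a product of elementary nodal embeddings, as claimed. The main technical obstacle lies in the $(k,\ell) = (2,1)$ case: the reduction to a regular embedding is exactly the content of Corollary \ref{C:Regular}, which in turn relies on the bimodule analysis in Propositions \ref{P:regular1}--\ref{P:Key} and the dichotomy of Lemma \ref{L:Cles}. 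All other cases reduce to Skolem--Noether and dimension counting.
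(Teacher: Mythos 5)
Your proposal is correct and takes essentially the same route as the paper's proof: your bipartite incidence graph is a convenient repackaging of what the paper encodes via the multiplicities $q_{ji}$ and the counts $t_i=\sum_j r_j q_{ji}$, and your ``isolated edge vs.\ cherry'' component analysis matches the paper's Claims 1--3 and Cases A/B, with the final identification of each component relying, as in the paper, on dimension-counting, Skolem--Noether, and Corollary \ref{C:Regular}. One small slip worth fixing: the bound $\sum_j k_{ji}\,\ell_\Lambda(S_j)\le 2$ you invoke is precisely $\ell_\Lambda(\Gamma\otimes_\Lambda U_i)\le 2$, which comes directly from Definition \ref{nod} (the Backstr\"om condition being automatic for semi-simple algebras), whereas Theorem \ref{T:NodalCharact} is the equivalent surjection criterion $\Lambda^2\twoheadrightarrow\Gamma$ -- i.e.\ $t_i\le 2$ -- that the paper uses; both are fine, but your citation should point to the definition rather than to the theorem.
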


\begin{proof}  Let us  fix the following conventions and notation.
\begin{itemize}
\item If $F_i$ and $F_j$ are isomorphic $\kk$-algebras for some $1 \le i \ne j \le m$ then we view them as \emph{equal} and denote them by the same symbol. 
\item  For any $1 \le j \le m$ let  $S_j = F^{r_j} = \left(\begin{array}{c} F_j \\ \vdots 
\\ F_j \end{array}\right)$ be the simple left $\Gamma_j$-module (which is unique up to an isomorphism). 
\item We fix a decomposition $\Lambda = \Lambda_1 \times \dots \times \Lambda_n$, where $\Lambda_1, \dots, \Lambda_n$ are finite-dimensional division algebras over $\kk$. 
\item For any $1 \le i \le n$ let $e_i \in \Lambda$ be the primitive idempotent corresponding to the unit element of the factor $\Lambda_i$. Analogously, let $f_1, \dots, f_m \in \Gamma$ be the family of central orthogonal idempotents such that $\Gamma_j = f_j \Gamma = \Gamma f_j$ for all $1 \le j \le m$.  Of course, each $f_j$
corresponds to the unit element of the factor $\Gamma_j$.  
\item For any $1 \le i \le n$ and $1 \le j \le m$  consider the composition
$$
\sigma_{ji}: \Lambda_i \longhookrightarrow \Lambda \stackrel{\phi}\longhookrightarrow \Gamma \rightarrowdbl \Gamma_j.
$$
In a similar way, for any $1 \le i \ne i' \le n$ and $1 \le j \le m$ we have a map
$$
\sigma_{j(i i')}: \Lambda_i \times \Lambda_{i'}\longhookrightarrow 
\Lambda \stackrel{\phi}\longhookrightarrow \Gamma \rightarrowdbl \Gamma_j.
$$
\item Since $S_j$ is a finite length $A$-module, is admits a direct sum decomposition
$$
S_j \cong \bigoplus\limits_{i=1}^n \Lambda_i^{q_{ji}}
$$
for some uniquely determined multiplicities $q_{ji} \in \NN_0$, where $1\le i \le n$ and $1 \le j \le m$. It is clear that $q_{ji} > 0$ if and only if $\sigma_{ji} \ne 0$. 
\item For any $1 \le i \le n$ consider 
$
t_i := \sum\limits_{j = 1}^m r_j q_{ji}.
$
Obviously, $t_i$ is the multiplicity of  $\Lambda_i$ in $\Gamma$ viewed as a left $\Lambda$-module. 
\end{itemize} 

\smallskip
\noindent
\underline{Claim 1}. For any  $1 \le i \le n$ we have: $t_i \le 2$. 
As a consequence, 
\begin{equation}\label{E:Multiplicities}
\left|\{1 \le j \le m \, \big| q_{ji} \ge 1 \}\right| \le 2. 
\end{equation}
Moreover, $r_j \le 2$ for all $1 \le j \le m$. 

\smallskip
\noindent
\underline{Proof}.  As $\phi(\Lambda) \subseteq \Gamma$ is nodal and $\Lambda$ is basic, Theorem \ref{T:NodalCharact} implies that  $t_i \le 2$ for any $1 \le i \le n$.  Since for any  $1 \le j \le m$ there exists $1 \le i \le n$ such that $q_{ji} \ne 0$, we conclude that $r_j \le 2$, as asserted. \qed
\\[.5ex]
Note that the inequality (\ref{E:Multiplicities}) can be rephrased as follows: 
\begin{equation*}
\left|\{1 \le j \le m \, \big| \sigma _{ji} \ne 0 \}\right| \le 2. 
\end{equation*}
Summing up, each component $\Gamma_j$ is either a division algebra $F_j$ or the matrix algebra $M_2(F_j)$.  Moreover, each component $\Lambda_i$ of $\Lambda$ ``contributes'' to at most two different components $\Gamma_j$ and $\Gamma_{l}$ of $\Gamma$. \qed

\smallskip
\noindent
\underline{Claim 2}. For any  $1 \le j \le m$ we have:
\begin{equation}\label{E:MultiplicitiesDual}
\left|\{1 \le i \le n \, \big| q_{ji} \ge 1 \}\right| \le r_j. 
\end{equation}
Informally speaking, for each component $\Gamma_j$ there exist at most $r_j \le 2$ different components    of $\Lambda$ which ``contribute'' to $\Gamma_j$.

\smallskip
\noindent
\underline{Proof}. Given $1 \le j \le m$, assume that $q_{ji} \ne 0 \ne q_{jk}$ for some $1 \le i \ne k \le n$. Then $\phi(e_i) f_j$ and $\phi(e_k) f_j$ are two non-zero orthogonal idempotents of $\Gamma_j$. However, $\Gamma_j = M_{r_j}(F_j)$ with $r_j \le 2$  contains at most $r_j$ such idempotents, implying the inequality (\ref{E:MultiplicitiesDual}).  \qed

\smallskip
\noindent
\underline{Claim 3}. Assume that $1 \le j \le m$ and $1 \le i \ne k \le n$ are such that $q_{ji} \ne 0 \ne q_{jk}$. Then the following statements are true:
\begin{itemize}
\item $r_j = 2$.
\item $\Lambda_i \cong F_j \cong \Lambda_k$, $\sigma_{jl} = 0$ for all $l \notin\{i, k\}$ and $\Lambda_i \times \Lambda_k \stackrel{\sigma_{j(ik)}}\lar \Gamma_j = M_2(F_j)$ is similar  to the canonical  embedding. 
\end{itemize}

\smallskip
\noindent
\underline{Proof}. By Claim 1 we know that $r_j \le 2$. On the other hand, Claim 2 implies that $r_j \ge 2$. It follows that $r_j = 2$ and $\sigma_{jl} = 0$ for all $l \notin\{i, k\}$. It follows that 
$\Lambda_i \times \Lambda_k \stackrel{\sigma_{j(ik)}}\lar \Gamma_j$ is an injective  homomorphism of $\kk$-algebras. 

We have a direct sum decomposition $S_j \cong \Lambda_i^{p_i} \oplus \Lambda_k^{p_k}$ in the category of $\Lambda$-modules, where $p_i, p_k \ge 1$. 
Since $\Gamma_j = M_2(F_j)$,   the simple $\Gamma$-module $S_j$ has multiplicity two in the  decomposition of $\Gamma$ in a direct sum of simple  $\Gamma$-modules. 
Hence, $\Gamma$ viewed as a $\Lambda$-module, contains a direct summand $\Lambda_i^{2 p_i} \oplus \Lambda_k^{2p_k}$. On the other hand, according to 
Theorem \ref{T:NodalCharact},  we have a surjective homomorphism of $\Lambda$-modules $\Lambda^{{2}}  \twoheadarrow  \Gamma$. It follows that $p_i = 1 = p_k$  and  $S_j \cong \Lambda_i \oplus \Lambda_k$,  viewed as a $\Lambda$-module. 

Next, $\phi(e_i), \phi(e_k) \in \Gamma_j$ are primitive orthogonal idempotents and $f _j = \phi(e_i) + \phi(e_k)$. Since $\Gamma_j = M_2(F_j)$, there exists a unit $b \in \Gamma_j$ such that 
$$
b e_i b^{-1} = u_{11} := \left(\begin{array}{cc}
1 & 0 \\
0 & 0
\end{array}
\right)
\quad \mbox{\rm and} \quad 
b e_k b^{-1} = u_{22} := \left(\begin{array}{cc}
0 & 0 \\
0 & 1
\end{array}
\right).
$$
It follows that
$
\Lambda_i \cong u_{11} \Gamma_j u_{11} \cong F_j \quad \mbox{\rm and} \quad \Lambda_i \cong u_{22} \Gamma_j u_{22} \cong F_j.
$
Moreover, the following diagram
$$
\xymatrix{
\Lambda_i \times \Lambda_k \ar[r]^{\sigma_{j(ik)}} \ar[d]_-\cong & M_2(F_j) \ar[d]^-{\mathsf{Ad}_b} \\
F_j \times F_j \ar[r]^{\mathsf{can}} & M_2(F_j) 
}
$$
is commutative, proving the claim. \qed

\medskip
\noindent
Now we can classify all possible nodal embeddings
$
\prod\limits_{i = 1}^n \Lambda_i = \Lambda \stackrel{\phi}\lar \Gamma = \prod\limits_{j = 1}^m M_{r_j}(F_j),
$
where $\Lambda$ is assumed to be basic. 

\smallskip
\noindent
\underline{Case A}. Let $1 \le j \le m$  be such that $r_j = 1$. According to (\ref{E:MultiplicitiesDual}),  there exists a unique  $1 \le i \le n$ such that  $q_{ji} \ne 0$.

\smallskip
\noindent
\underline{Subcase A1}. Assume that $q_{ji} = 1$ and $q_{li} = 0$ for all $1 \le l \ne j \le m$. Then $\Lambda_i = \Gamma_j$ are some finite-dimensional division algebras over $\kk$ and $\sigma_{ji}$ is an isomorphism. Hence, we  get an elementary component of type (1).

\smallskip
\noindent
\underline{Subcase A2}. Assume that $q_{ji} = 1$ and $q_{li} \ne  0$
for some $1 \le l \ne j \le n$. Since $\sum\limits_{p = 1}^m r_p q_{pi} \le 2$, the only possibility is that $r_j = r_l = 1$ and $q_{ji} = 1 = q_{li}$, whereas  $q_{pi} = 0$ for all $p \notin \{j, l\}$. It follows that $\sigma _{pi} = 0$ for all $p \notin \{j, l\}$, whereas $\Lambda_i \stackrel{\sigma_{pi}}\lar \Gamma_p$ is an isomorphism of  $\kk$--algebras for $p \in \{j, l\}$. Let $\tau:= \sigma_{li} \sigma_{ji}^{-1}: F_j \lar F_l$. Identifying, $\Lambda_i$, $\Gamma_j$ and $\Gamma_l$ with the same finite-dimensional division $\kk$-algebra $F$, we end up with an elementary nodal pair of type (2). Since we classify nodal pairs up to a similarity, $\tau$ can be without loss of generality assumed to be outer. 

\smallskip
\noindent
\underline{Subcase A3}. Assume that $q_{ji} = 2$. Since $t_i \le 2$, it follows that $\sigma_{li} = 0$ for all $1 \le l \ne j \le m$ (i.e.~the component $\Lambda_i$ contributes only to $\Gamma_j$).  It follows that $\Lambda_i \stackrel{\sigma_{ji}}\lar \Gamma_j = F_j$ is an injective homomorphism of $\kk$-algebras  and $\dim_{\kk}(\Gamma_j) = 2 \dim_{\kk}(\Lambda_i)$.  Hence, in this subcase we get an elementary component of type (3). 

\medskip
\noindent
\underline{Case B}. Let $1 \le j \le m$  be such that $r_j = 2$. 

\smallskip
\noindent
\underline{Subcase B1}. Assume that $1 \le i \ne k \le n$ are such that $q_{ji} \ne 0 \ne q_{jk}$. According to Claim 3, we have: $q_{ji} = 1 = q_{jk}$. Moreover, $\sigma_{jl} = 0$ for $j \notin \{i, k\}$ and
$\Lambda_i \times \Lambda_k \stackrel{\sigma_{j(ik)}}\lar \Gamma_j = M_2(F_j)$ is similar to an elementary component of type (4).

\smallskip
\noindent
\underline{Subcase B2}. Suppose that there exists exactly one $1 \le i \le n$ such that $q_{ji} \ne 0$. By Claim 1 we have: $t_i \le 2$. Since $r_j = 2$, we conclude that $q_{ji} = 1$ and $q_{li} = 0$ for all $1 \le l \ne j \le m$. Hence, we are precisely in the setting of Corollary \ref{C:Regular}. It follows that $\Lambda_i \stackrel{\sigma_{ji}}\lar \Gamma_j = M_2(F_j)$ is an elementary nodal pair of type (5).

\smallskip
\noindent
Since we exhausted all the cases, theorem is proven. 
\end{proof}

\begin{corollary}\label{C:Real} We illustrate Theorem \ref{T:SemiSimpleNodal} by describing the similarity types of  elementary nodal embeddings for $\kk = \RR$. 
\begin{enumerate}
\item[(a)] Type  (1) embeddings are $\FF \stackrel{\mathsf{id}}\lar \FF$, where  $\FF \in \bigl\{\RR, \CC, \HH\bigr\}$. 
\item[(b)] Type (2) embeddings are
$\FF \stackrel{\mathsf{diag}}\lar \FF \times \FF$ for $\FF \in \bigl\{\RR, \CC, \HH\bigr\}$. Additionally, we have the embedding 
$\CC \stackrel{\mathsf{diag}^\ast}\lar \CC \times \CC, z \mapsto (z, \bar{z})$. 
\item[(c)] Type (3) embeddings are  $\RR \subset \CC$ and $\CC \subset \HH$. 
\item[(d)] Type (4) embeddings are $\FF \times \FF \stackrel{\mathsf{can}}\lar  M_2(\FF)$ for each  $\FF \in  \bigl\{\RR, \CC, \HH\bigr\}$.  
\item[(e)] Finally,  type (5) embeddings are regular embeddings given by  (\ref{E:regular1}) and 
(\ref{E:regular2}).
\end{enumerate}

\end{corollary}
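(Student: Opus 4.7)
The proof is essentially a direct application of Theorem \ref{T:SemiSimpleNodal} to the case $\kk = \RR$, combined with the classical Theorem of Frobenius enumerating the finite-dimensional division algebras over $\RR$ as $\RR,\CC,\HH$. The plan is to walk through each of the five elementary types and enumerate the possibilities, checking that the dimension constraints and similarity relations match precisely the list in the statement.

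For type (1), the list is immediate since the three division $\RR$-algebras are $\RR,\CC,\HH$. For type (4), the canonical embeddings $\FF\times\FF\hookrightarrow M_2(\FF)$ are likewise enumerated directly, one for each $\FF\in\{\RR,\CC,\HH\}$. Types (3) and (5) share the same input data: a pair $(K,L)$ of division $\RR$-algebras with $\dim_\RR(L)=2\dim_\RR(K)$. Since $\dim_\RR$ takes the values $1,2,4$, the only possibilities are $(K,L)=(\RR,\CC)$ and $(K,L)=(\CC,\HH)$; I would note explicitly that $(\RR,\HH)$ is excluded because $\dim_\RR\HH=4\ne2$. Uniqueness of each embedding up to similarity follows from Skolem--Noether (or trivially in the commutative case $\RR\subset\CC$). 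For type (5), Example \ref{E:Real} then identifies the associated regular embeddings as (\ref{E:regular1}) and (\ref{E:regular2}).

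The only point that requires a small argument is type (2). Here I must classify, for each $F\in\{\RR,\CC,\HH\}$, the maps $\mathsf{diag}_\tau\colon F\to F\times F$ with $\tau\in\Aut_\RR(F)$ up to similarity. A direct check shows $\mathsf{diag}_\tau\sim\mathsf{diag}_{\tau'}$ if and only if $\tau^{-1}\tau'$ is inner, so the similarity classes are parametrized by $\Aut_\RR(F)/\mathrm{Inn}(F)$. For $F=\RR$ the automorphism group is trivial, giving only $\mathsf{diag}$. For $F=\CC$ one has $\Aut_\RR(\CC)=\{\mathrm{id},\mathsf{conj}\}$, and since $\CC$ is commutative every inner automorphism is trivial, so the two classes contribute $\mathsf{diag}$ and $\mathsf{diag}^\ast$. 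For $F=\HH$, since $\HH$ is a central simple $\RR$-algebra, the Skolem--Noether theorem forces every $\RR$-algebra automorphism to be inner, and we recover only $\mathsf{diag}$.

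There is no real obstacle; the result is a bookkeeping exercise once Theorem \ref{T:SemiSimpleNodal} is in hand. The only mild subtlety worth flagging is the coherent treatment of the word ``outer'' in type (2): it must be interpreted as ``a chosen representative of $\Aut_\RR(F)/\mathrm{Inn}(F)$'', with the trivial coset represented by $\mathrm{id}$, which is why $\mathsf{diag}$ appears for every $\FF$ while the genuinely outer representative $\mathsf{conj}$ only contributes the extra embedding $\mathsf{diag}^\ast$ for $\FF=\CC$.
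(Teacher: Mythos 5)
Your argument is correct and is precisely the bookkeeping the corollary is intended to record; the paper itself gives no separate proof, treating it as an immediate consequence of Theorem \ref{T:SemiSimpleNodal}, Frobenius's classification of real division algebras, Example \ref{E:Real}, and Skolem--Noether. Your treatment of type (2) via $\Aut_\RR(F)/\mathrm{Inn}(F)$, and the remark that ``outer'' must be read as ``a representative of that quotient with the trivial coset represented by $\mathrm{id}$,'' is exactly the right reading of the theorem's statement and matches the enumeration in the corollary.
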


\section{Classification of real nodal orders}\label{S:RealNodal}

\noindent
In this section, we give a classification of real nodal orders. 
\begin{enumerate}
\item[(i)] Let $\Pi = \left\{\mathsf{re}, \mathsf{co}, \mathsf{qt}\right\}$.
 We put: $\FF_{\mathsf{re}} = \RR$, $\FF_{\mathsf{co}} = \CC$ and $\FF_{\mathsf{qt}} = \HH$, which defines a bijection between 
$\Pi$ and the  set of the isomorphism classes of finite-dimensional division algebras  over $\RR$. 
\item[(ii)]  Similarly, let $\Xi = \left\{\mathsf{re}, \mathsf{cx}, \mathsf{tc}, \mathsf{qt}\right\}$. We put 
$\OO_{\mathsf{re}} = \RR\llbracket t\rrbracket$, $\OO_{\mathsf{cx}} = \CC\llbracket t\rrbracket$, $\OO_{\mathsf{tc}} = \CC\llbracket t\rrbracket^{\mathsf{tw}}$ and $\OO_{\mathsf{qt}} = \HH\llbracket t\rrbracket$.
This correspondence defines  a bijection between  $\Xi$ and the set of the isomorphism classes of  maximal scalar local real orders. 
\item[(iii)]
We fix a surjective  map $\Xi \rightarrowdbl \Pi$, which maps $\mathsf{cx}$ and $\mathsf{tc}$ to $\mathsf{co}$ and $\mathsf{re}$ and  $\mathsf{qt}$ to themselves. This maps assigns to a maximal real order its residue skew field. 
\end{enumerate}

\begin{definition}\label{D:ParameterSet}
Consider tuples  $\bigl(\Omega, \prec,  \tau, \sim,  \alpha, \beta, \gamma\bigr)$ defined as follows. 

\begin{enumerate}
\item[(a)] $(\Omega, \prec)$ is a finite partially ordered set, 
which is a union of chains: $\Omega = \bigsqcup\limits_{i \in I} \Omega_i$.
\item[(b)] $I \stackrel{\tau}\lar \Xi$ is any map. It defines  an induced map $I \stackrel{\chi}\lar \Pi$ given  as the composition
$$
\xymatrix{
& I \ar[ld]_{\tau} \ar[rd]^\chi & \\
\Xi \ar@{->>}[rr] & & \Pi
}
$$  
 Abusing the notation, we obtain  further   induced maps $\Omega \stackrel{\tau}\lar \Xi$ and $\Omega \stackrel{\chi}\lar \Pi$, defined by the formulae $\tau(\omega) = \tau(i)$ and $\chi(\omega) = \chi(i)$ for any $\omega \in \Omega_i$. 
 We have decompositions 
 $$
 I = I_{\mathsf{re}} \sqcup I_{\mathsf{co}} \sqcup I_{\mathsf{qt}} \quad\mbox{\rm and} \quad I_{\mathsf{co}} = I_{\mathsf{cx}} \sqcup I_{\mathsf{tc}}
 $$
 as well as 
 $$
 \Omega = \Omega_{\mathsf{re}} \sqcup \Omega_{\mathsf{co}} \sqcup \Omega_{\mathsf{qt}} \quad\mbox{\rm and} \quad \Omega_{\mathsf{co}} = \Omega_{\mathsf{cx}} \sqcup \Omega_{\mathsf{tc}}
 $$
 defined in an obvious way. 
\item[(c)] $\sim$ is a symmetric binary  relation on $\Omega$ satisfying the following conditions:
\begin{enumerate}
\item[(i)] For any $\omega \in \Omega$ there exists at most one $\widetilde\omega \in \Omega$ such that $\omega \sim \widetilde\omega$. 
\item[(ii)] If $\omega' \ne \omega''$ and $\omega' \sim \omega''$ then $\chi(\omega') = \chi(\omega'')$. 
\end{enumerate}
We put: $\Omega^{\mathsf{s}} = \bigl\{\omega \in \Omega \, \big| \, \nexists \, \widetilde\omega \, \in \Omega: \omega \sim \widetilde\omega \bigr\}$, 
$\Omega^{\mathsf{d}} = \bigl\{\omega \in \Omega \, \big| \, \omega \sim \omega \bigr\}$ and $\Omega^{\mathsf{g}} = \bigl\{\omega \in \Omega \, \big| \, \exists \, \widetilde\omega \, \in \Omega: \omega \sim \widetilde\omega \; \mbox{and} \; \omega \ne \widetilde\omega\bigr\}$ (the corresponding abbreviations stand  for  \emph{single}, \emph{doubled} and \emph{glued} elements of $\Omega$, respectively). Let $\overline{\Omega}_{\mathsf{co}}^{\mathsf{g}} = {\Omega}_{\mathsf{co}}^{\mathsf{g}}/\sim$ be the set of equivalence  classes of elements of ${\Omega}_{\mathsf{co}}^{\mathsf{g}}$ with respect to the binary relation $\sim$. 
\item[(d)] Finally, we have the following three maps $\alpha, \beta$ and $\gamma$: 
\begin{enumerate}
\item[(i)] $\Omega^{\mathsf{s}} \stackrel{\alpha}\lar \bigl\{\mathsf{id}, \mathsf{ex} \bigr\}$ is any map such that $\alpha(\omega) = \mathsf{id}$ if $\chi(\omega) = \mathsf{re}$. 
\item[(ii)] $\Omega^{\mathsf{d}} \stackrel{\beta}\lar \{\mathsf{can}, \mathsf{reg}\}$ is any map such that
$\beta(\omega) = \mathsf{can}$ if  $\chi(\omega) = \mathsf{qt}$.
\item[(iii)] $\overline{\Omega}_{\mathsf{co}}^{\mathsf{g}} \stackrel{\gamma}\lar \bigl\{+1, -1 \bigr\}$ is any map. 
\end{enumerate}
\end{enumerate}
\end{definition}

Let us  now describe the procedure which associates to a datum $\bigl(\Omega, \prec, \tau, \sim,  \alpha, \beta, \gamma\bigr)$ as above  the corresponding \emph{basic} real nodal order $A = A\bigl(\Omega, \prec, \tau, \sim,  \alpha, \beta, \gamma\bigr)$. In the notation of  Theorem \ref{T:Cartesian}, we  define $A$ as $H \vee \Lambda$, where $H$ is a real hereditary order, $\Lambda$ is a basic finite-dimensional semi-simple algebra over $\RR$ and $\Lambda \stackrel{\jmath}\lar \bar{H}$ a nodal embedding defined as follows. 

\smallskip
\noindent
A)  Description of the hereditary order $H = H(\Omega, \prec, \tau, \sim)$.
\begin{enumerate}
\item[(i)] The relation $\sim$ defines  the following  function:
\begin{equation}\label{E:WeightHeredOrder}
\Omega \stackrel{\upsilon}\lar \left\{1, 2\right\}, \;  \upsilon(\omega)= \left\{
\begin{array}{ll}
1 & \mbox{\rm if} \;  \omega \in \Omega^{\mathsf{s}} \sqcup \Omega^{\mathsf{g}} \\
2 & \mbox{\rm if} \;  \omega \in \Omega^{\mathsf{d}}. 
\end{array}
\right.
\end{equation}
\item[(ii)] For any $k \in I$ 
let $\Omega_k = \bigl\{\omega_1^{(k)}, \omega_2^{(k)}, \dots,  \omega_{l_k}^{(k)} \bigr\}$ with the  ordering given  by 
$\omega_1^{(k)} \prec \omega_2^{(k)} \prec \dots \prec  \omega_{l_k}^{(k)}
$. Let $\vec{d_k} = (d_1, \dots, d_{l_k})$, where 
$d_i = \upsilon(\omega^{(k)}_i)$ for each $1 \le i \le l_k$. Then we put:
\begin{equation}\label{E:HeredCover}
H = \prod\limits_{k \in I} H_k =  \prod\limits_{k \in I} H\bigl(\OO_{\tau(k)}, \vec{d}_k\bigr).
\end{equation}
In these terms, we  get a natural isomorphism
$$
\bar{H} \cong \prod\limits_{\omega \in \Omega} \bar{H}_{\omega} =  \prod\limits_{\omega \in \Omega} M_{\upsilon(\omega)}\bigl(\FF_{\chi(\omega)}\bigr).
$$
\end{enumerate}
\smallskip
\noindent
B)  Description of the basic semi-simple $\RR$-algebra $\Lambda = \bigl(\Omega, \chi, \sim,  \alpha, \beta, \gamma\bigr)$ as well as the nodal embedding $\Lambda \stackrel{\jmath}\lar \bar{H}$.

\begin{enumerate}
\item[(i)] Let $\overline{\Omega}^{\mathsf{g}} = {\Omega}^{\mathsf{g}}/\sim$ be the set of equivalence classes of elements of ${\Omega}^{\mathsf{g}}$ with respect of the  relation $\sim$. Then we have a canonical surjective map $\Omega^{\mathsf{g}} \stackrel{\pi_{\mathsf{g}}}\rightarrowdbl \overline{\Omega}^{\mathsf{g}}$. Next, starting with the set $\Omega^{\mathsf{d}}$, we define a new set $\overline{\Omega}^{\mathsf{d}}$ replacing each element $\omega \in 
\Omega^{\mathsf{d}}$ such that $\beta(\omega) = \mathsf{can}$ by two new elements $\omega_+$ and 
$\omega_-$. In this way, we get a surjective map $\overline{\Omega}^{\mathsf{d}} \stackrel{\pi_{\mathsf{d}}}\rightarrowdbl {\Omega}^{\mathsf{d}}$. We put:
\begin{equation}
\overline{\Omega} = \Omega^{\mathsf{s}} \sqcup \overline{\Omega}^{\mathsf{d}} \sqcup \overline{\Omega}^{\mathsf{g}}.
\end{equation}
The primitive idempotents of the algebra $\Lambda$ constructed below are in bijection with elements of the set $\overline{\Omega}$. More precisely, we put  $\Lambda = 
\prod\limits_{\xi \in \overline{\Omega}} \Lambda_{\xi} = \prod\limits_{\xi \in \overline{\Omega}} \FF_{\chi(\xi)}$.
\item[(ii)] Let $\xi \in \Omega^{\mathsf{s}}$. Then we have a nodal embedding $\Lambda_\xi \stackrel{\jmath_{\xi}}\lar \bar{H}_\xi$ defined as follows. 
\begin{enumerate}
\item[(a)] If $\alpha(\xi) = \mathsf{id}$ then this embedding is
$$
\Lambda_\xi = \FF_{\chi(\xi)} \stackrel{\mathsf{id}}\lar \FF_{\chi(\xi)} = \bar{H}_\xi. 
$$
\item[(b)] If $\alpha(\xi) = \mathsf{ex}$ then this embedding is 
$$
\Lambda_\xi = \RR  \longhookrightarrow \CC = \bar{H}_\xi \quad  \mbox{\rm if} \quad \chi(\xi) = \mathsf{co}
$$
and 
$$
\Lambda_\xi = \CC  \longhookrightarrow \HH = \bar{H}_\xi \quad \mbox{\rm if} \quad  \chi(\xi) = \mathsf{qt}.
$$
\end{enumerate}
\item[(iii)] Let $\xi \in \overline{\Omega}^{\mathsf{g}}$ and 
$\left\{\omega', \omega'' \right\} = \pi^{-1}_{\mathsf{g}}(\xi) \subset \Omega^{\mathsf{g}}$. 
Then we have a nodal embedding $\Lambda_\xi = \FF_{\chi(\xi)}\stackrel{\jmath_{\xi}}\lar \FF_{\chi(\xi)} \times \FF_{\chi(\xi)} = \bar{H}_{\omega'} \times \bar{H}_{\omega''}$  defined as follows: 
$$
\jmath_{\xi} = 
\left\{
\begin{array}{ll}
\mathsf{diag}^\ast & \mbox{\rm if} \; \chi(\xi) = \mathsf{co} \; \mbox{\rm and} \; \gamma(\xi) = -1 \\
\mathsf{diag} & \mbox{\rm in all the other cases}.
\end{array}
\right.
$$
\item[(iv)] Finally, for any $\omega \in \overline{\Omega}^{\mathsf{d}}$ we define the corresponding nodal embedding $\jmath_\omega$ as follows. 
\begin{enumerate}
\item[(a)] If $\pi^{-1}_{\mathsf{d}}(\omega) = \left\{\omega_+, \omega_-\right\}$ then $\jmath_\omega$ is
$$
\Lambda_{\omega_+} \times \Lambda_{\omega_-} = 
\FF_{\chi(\omega)} \times \FF_{\chi(\omega)} \stackrel{\mathsf{can}}\lar  M_2\bigl(\FF_{\chi(\omega)}\bigr) = \bar{H}_{\omega}.
$$
\item[(b)] $\pi^{-1}_{\mathsf{d}}(\omega) = \{\omega\}$ then $\jmath_\omega$ is 
$$
\Lambda_\omega = \CC  \stackrel{\mathsf{reg}}\longhookrightarrow M_2(\RR)  = \bar{H}_\omega \quad  \mbox{\rm if} \quad \chi(\omega) = \mathsf{re}
$$
and 
$$
\Lambda_\omega = \HH  \stackrel{\mathsf{reg}}\longhookrightarrow M_2(\CC)  = \bar{H}_\omega \quad  \mbox{\rm if} \quad \chi(\omega) = \mathsf{co}.
$$
\end{enumerate}
\end{enumerate}

\begin{definition}
Let  $\bigl(\Omega, \prec,  \tau, \sim,  \alpha, \beta, \gamma\bigr)$ and $\bigl(\Omega', \prec',  \tau', \sim',  \alpha', \beta', \gamma'\bigr)$ 
be a pair of tuples from Definition \ref{D:ParameterSet}. 
They are called \emph{equivalent} if and only if there exist a bijection $\Omega \stackrel{\varphi}\lar \Omega'$ and a function $I_{\mathsf{co}} \stackrel{\eta}\lar \{+1, -1\}$ satisfying the following properties.

\begin{enumerate}
\item[(a)]  If $\omega_1, \omega_2 \in \Omega$ belong to the same chain then  $\varphi(\omega_1), \varphi(\omega_2) \in \Omega'$  belong to the same chain, too. Hence, $\varphi$ induces a bijection $I \stackrel{\phi}\lar I'$ of the sets of chains of $\Omega$ and $\Omega'$. 
Moreover, for any $i \in I$ the restricted bijection $\Omega_i \stackrel{\varphi}\lar \Omega'_{\phi(i)}$ is a cyclic permutation with respect to $\prec$ and $\prec'$. 
\item[(c)] The diagram 
$$
\xymatrix{
I \ar[rr]^-\phi \ar[rd]_-{\tau} & & I' \ar[ld]^-{\tau'}\\
& \Xi & 
}
$$
is commutative. 
\item[(d)] For any $\omega_1, \omega_2 \in \Omega$ we have: $\omega_1 \sim \omega_2$ if and only if $\varphi(\omega_1) \sim' \varphi(\omega_2)$. Similarly, the diagrams 
\vspace{-3mm}
$$
\begin{array}{ccc}
\xymatrix{
\Omega^{\mathsf{s}} \ar[rr]^-\varphi \ar[rd]_-{\alpha} & & \Omega'^{\mathsf{s}}  \ar[ld]^-{\alpha'}\\
& \{\mathsf{id}, \mathsf{ex}\} & 
}
& \begin{array}{c} \\ \\ \\ \mbox{\rm and}\end{array} & 
\xymatrix{
\Omega^{\mathsf{d}} \ar[rr]^-\varphi \ar[rd]_-{\beta} & & \Omega'^{\mathsf{d}}  \ar[ld]^-{\beta'}\\
& \{\mathsf{can}, \mathsf{reg}\} & 
}
\end{array}
$$
are commutative. 
\item[(e)] Consider the function $\Omega_{\mathsf{co}} \stackrel{\delta_\varphi}\lar \{+1, -1\}$ defined as follows. 
\begin{enumerate}
\item[(i)] If $\omega \in \Omega_{\mathsf{cx}}$ then $\delta_\varphi(\omega) = 1$. 
\item[(ii)] Assume now that $\omega \in \Omega_{\mathsf{tc}} \cap \Omega_i$ for some $i \in I$. 
Let $\omega_\circ$ be the minimal element of $\Omega_i$. Then we put:
\begin{equation}
\delta_\varphi(\omega) = \left\{
\begin{array}{cl}
-1 & \mbox{\rm if} \; \varphi(\omega) < \varphi(\omega_\circ) \quad \mbox{\rm in} \quad  \Omega'_{\phi(i)} \\
+1 & \mbox{\rm if} \; \varphi(\omega) \ge \varphi(\omega_\circ) \quad  \mbox{\rm in} \quad  \Omega'_{\phi(i)}.
\end{array}
\right.
\end{equation}
\item[(iii)] Let $\Omega_{\mathsf{co}} \stackrel{\widetilde\eta}\lar \bigl\{+1, -1\bigr\}$ be the composition of $\eta$ with the projection $\Omega_{\mathsf{co}} \rightarrowdbl I_{\mathsf{co}}$. 
Then we require that 
$$
\gamma'\bigl(\overline{\{\varphi(\omega_1), \varphi(\omega_2)\}}\bigr) = \delta_\varphi(\omega_1) \delta_\varphi(\omega_2) \widetilde\eta(\omega_1) \widetilde\eta(\omega_2) \gamma\bigl(\overline{\{\omega_1, \omega_2\}}\bigr)
$$
for any $\omega_1 \ne  \omega_2 \in \Omega_{\mathsf{co}}$ such that $\omega_1 \sim \omega_2$. 
\end{enumerate}
\end{enumerate}
\end{definition}

\begin{theorem}\label{T:NodalClassification} The isomorphism classes of basic real nodal orders are parametrized by  the equivalence types of tuples $\bigl(\Omega, \prec,  \tau, \sim,  \alpha, \beta, \gamma\bigr)$  from Definition \ref{D:ParameterSet}.
\end{theorem}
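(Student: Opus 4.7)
The plan is to combine four building blocks already established in the paper. Theorem~\ref{T:Cartesian} and Corollary~\ref{C:Key} reduce the classification of basic real nodal orders $A$ to that of pairs $(H,\,\Lambda\hookrightarrow\bar H)$, where $H$ is a real hereditary order and $(\Lambda,\bar H)$ is a semi-simple nodal pair with $\Lambda$ basic, modulo algebra isomorphisms $\psi:H\to H'$ with $\bar\psi(\Lambda)=\Lambda'$. Theorem~\ref{T:HeredOrders} together with Summary~\ref{SummaryRealHereditary} classifies the hereditary factor $H$ by the discrete data $(I,\tau,\{\vec d_k\})$; Theorem~\ref{T:SemiSimpleNodal} and its real specialisation Corollary~\ref{C:Real} classify the embedding $\Lambda\hookrightarrow\bar H$ up to similarity as a product of five elementary embeddings; and Corollary~\ref{C:SummaryAutomorphisms} describes precisely which induced automorphisms $\bar\psi$ of $\bar H$ are realised by some $\psi\in\Aut(H)$.

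First I would verify that the construction $(\Omega,\prec,\tau,\sim,\alpha,\beta,\gamma)\mapsto A$ produces a basic nodal $\RR$-order. The hereditary order $H$ built in (\ref{E:HeredCover}) is a product of Harada standard orders over the four real maximal scalar local orders $\OO_{\mathsf{re}},\OO_{\mathsf{cx}},\OO_{\mathsf{tc}},\OO_{\mathsf{qt}}$, hence is real hereditary; the semi-simple algebra $\Lambda=\prod_{\xi\in\overline\Omega}\FF_{\chi(\xi)}$ is a product of division algebras and so basic; the embedding $\jmath:\Lambda\hookrightarrow\bar H$ is by construction a product of the five elementary nodal embeddings of Corollary~\ref{C:Real}, hence itself a nodal embedding. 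Theorem~\ref{T:Cartesian} then yields the desired basic nodal order. Conversely, given a basic real nodal order $A$ with hereditary cover $H$ and quotient $\Lambda=A/\rad A$, Theorem~\ref{T:HeredOrders} applied together with Summary~\ref{SummaryRealHereditary} decomposes $H\cong\prod_{k\in I}H(\OO_{\tau(k)},\vec d_k)$, from which one reads off $I$, $\tau$, the chain $\Omega_k$ of length $l_k$, and the weight $\upsilon(\omega)$ as the size of the corresponding diagonal block. Basicness of $\Lambda$ together with Theorem~\ref{T:NodalCharact} and Theorem~\ref{T:SemiSimpleNodal} forces $\upsilon(\omega)\in\{1,2\}$. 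Decomposing the resulting embedding $\Lambda\hookrightarrow\bar H$ into elementary nodal pieces via Corollary~\ref{C:Real} then reads off $\sim$ (from the paired indices coming from types (2) and (4)), $\beta$ (distinguishing the canonical embedding of type (4) from the regular one of type (5)), $\alpha$ (distinguishing $\mathsf{id}$ from the embeddings $\RR\hookrightarrow\CC$ and $\CC\hookrightarrow\HH$ of type (3)), and $\gamma$ (distinguishing $\mathsf{diag}$ from $\mathsf{diag}^\ast$ among type~(2) embeddings on $\overline\Omega_{\mathsf{co}}^{\mathsf{g}}$).

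The heart of the proof is matching isomorphisms of orders with equivalences of tuples. By Corollary~\ref{C:Key}, an isomorphism $A\cong A'$ corresponds to $\psi:H\to H'$ with $\bar\psi(\Lambda)=\Lambda'$. Existence of $\psi$ is equivalent to the hereditary data matching, which via Theorem~\ref{T:StandardOrder}(c) and Theorem~\ref{T:HeredOrders} produces a bijection $\varphi:\Omega\to\Omega'$ preserving $\tau$ and acting by a cyclic permutation on each chain; these are conditions (a)--(c) of the equivalence. The remaining freedom in $\psi$ is controlled by Corollary~\ref{C:SummaryAutomorphisms}: the induced $\bar\psi$ is a composition of inner automorphisms of $\bar H$, automorphisms induced by ring automorphisms of the local factors $\OO_{\tau(k)}$, and twisted cyclic shifts $\mathsf{Ad}_\varrho$. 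Inner automorphisms preserve the similarity type of the embedding and thus do not affect the combinatorial data. Over $\OO_{\mathsf{re}}$ the only ring automorphism is the identity and over $\OO_{\mathsf{qt}}$ every ring automorphism is inner, both of which are absorbed; over $\OO_{\mathsf{cx}}$ complex conjugation induces complex conjugation simultaneously on every residue factor $\bar H_\omega$ of the chain, giving one global sign per $\CC$-chain which is recorded by $\eta:I_{\mathsf{co}}\to\{\pm1\}$. The twisted cyclic shift over $\OO_{\mathsf{tc}}$ conjugates exactly the residue factors that ``wrap around'' under the shift, which is exactly the local sign $\delta_\varphi$.

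The main obstacle is checking the precise transformation law in condition~(e)(iii). The similarity class of a type~(2) elementary embedding $\CC\hookrightarrow\CC\times\CC$ flips between $\mathsf{diag}$ and $\mathsf{diag}^\ast$ precisely when an odd number of complex conjugations is applied to the two target $\CC$-factors; hence the correct correction factor for $\gamma$ is exactly $\delta_\varphi(\omega_1)\delta_\varphi(\omega_2)\widetilde\eta(\omega_1)\widetilde\eta(\omega_2)$. The verification requires tracking carefully how each of the three generator types from Corollary~\ref{C:SummaryAutomorphisms} acts on the combinatorial data and checking that no additional relations between $\eta$ and $\delta_\varphi$ arise. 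Conversely, to show every equivalence of tuples lifts to an actual isomorphism of orders one builds $\psi\in\Aut(H)$ explicitly by piecing together the cyclic shifts realising $\varphi$, the complex conjugations realising $\eta$, and inner automorphisms absorbing the residual similarity freedom — a direct application of Corollary~\ref{C:SummaryAutomorphisms} once the bookkeeping is set up.
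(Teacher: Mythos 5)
Your proposal follows essentially the same route as the paper: reduce via Theorem~\ref{T:Cartesian} and Corollary~\ref{C:Key} to classifying pairs $(H,\Lambda\hookrightarrow\bar H)$, read off the combinatorial data from Theorem~\ref{T:HeredOrders} (together with Summary~\ref{SummaryRealHereditary}) and Corollary~\ref{C:Real}, and translate the isomorphism relation through Corollary~\ref{C:SummaryAutomorphisms}. Your elaboration of how the three generator types of $\Aut(H)$ produce the correction factor $\delta_\varphi\cdot\widetilde\eta$ in condition (e)(iii) is a useful expansion of a step the paper leaves implicit, with only a minor imprecision: the global conjugation encoded by $\eta$ is available on both $\OO_{\mathsf{cx}}$\emph{-} and $\OO_{\mathsf{tc}}$\emph{-}chains (for the latter via $\Ad_t\in\Aut(\CC\llbracket t\rrbracket^{\mathsf{tw}})$), not only on the former, and over $\OO_{\mathsf{re}}$ it is the \emph{induced} automorphism of $\bar O=\RR$ that is forced to be trivial, not the full automorphism group of $\RR\llbracket t\rrbracket$.
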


\begin{proof} Let $A$ be a basic real nodal order. By  Theorem \ref{T:Cartesian} we have:
$A \cong H \vee \Lambda$, where $H$ is a real hereditary order, $\Lambda$ is a basic finite-dimensional semi-simple algebra over $\RR$ and $\Lambda \stackrel{\jmath}\lar \bar{H}$ a nodal embedding. First note that $\jmath$ is a product of elementary nodal embeddings described in Corollary \ref{C:Real}. 

\smallskip
\noindent
By Theorem \ref{T:StandardOrder} and Theorem \ref{T:HeredOrders}, any  real hereditary order $H$ is isomorphic to
$$
H = H(O_1, \vec{d}_1) \times \dots \times H(O_r, \vec{d}_r)
$$
for some $r \in \NN$, where $O_i \in \bigl\{\OO_{\mathsf{re}}, \OO_{\mathsf{cx}}, \OO_{\mathsf{tc}}, \OO_{\mathsf{qt}} \bigr\}$ for all $1  \le i \le r$; see Summary \ref{SummaryRealHereditary}. 

Now we explain how the corresponding tuple $\bigl(\Omega, \prec,  \tau, \sim,  \alpha, \beta, \gamma\bigr)$ is  constructed. 
Because of the shape of the embedding $\Lambda \stackrel{\jmath}\lar \bar{H}$ we can conclude that entries of each $\vec{d}_i$ are $1$ or $2$. If  $\vec{d}_i \in \NN^{l_i}$ then we take a totally ordered chain $\Omega_i$ with $l_i$ elements and put 
$\Omega = \Omega_1 \sqcup \dots \sqcup \Omega_r$. This defines a partially ordered set 
$(\Omega, \prec)$.  In these terms, $I = \left\{1, \dots, r\right\}$. We define $I \stackrel{\tau}\lar \Xi$ by the rule $O_i \cong \OO_{\tau(i)}$ for each $i \in I$. 
The remaining data $(\sim, \alpha, \beta, \gamma)$ can be easily extracted from the structure  of the embedding $\Lambda \stackrel{\jmath}\lar \bar{H}$. 
It follows  that $A \cong A\bigl(\Omega, \prec,  \tau, \sim,  \alpha, \beta, \gamma\bigr)$, as asserted. 

It remains to be clarified when two  tuples  $\bigl(\Omega, \prec,  \tau, \sim,  \alpha, \beta, \gamma\bigr)$ and $\bigl(\Omega', \prec',  \tau', \sim',  \alpha', \beta', \gamma'\bigr)$ define isomorphic  real nodal orders.
Let $A = H  \vee \Lambda$ and $A' = H' \vee \Lambda'$. By Theorem \ref{T:Cartesian}, any 
isomorphism $A \lar A'$ can be extended to an isomorphism of the corresponding hereditary hulls $$H(\Omega, \prec, \tau, \sim) = H \stackrel{f}\lar H' = H(\Omega', \prec', \tau', \sim').$$ 
A classification of all such isomorphims is given by Corollary \ref{C:SummaryAutomorphisms}. Since $f$ identifies components of $H$ and $H'$ with respect of the decompositions (\ref{E:DecompHered}), 
it defines  bijections $\Omega \stackrel{\varphi}\lar \Omega'$ and 
$I \stackrel{\phi}\lar I'$ such that
$$
\xymatrix{
\Omega \ar[r]^{\varphi} \ar@{->>}[d] & \Omega' \ar@{->>}[d]\\
I \ar[r]^{\phi}  & I'
}
$$
is commutative. 
Moreover, $\tau' \phi = \tau$.

Let $\bar{H} \stackrel{\bar{f}}\lar \bar{H}'$ be the isomorphism induced by $f$. Then there exists a unique isomorphism of $\RR$-algebras $\Lambda \stackrel{\tilde{f}}\lar \Lambda'$ such that the diagram 
\begin{equation}\label{E:Compatibilty}
\begin{array}{c}
\xymatrix{
\Lambda \ar[r]^-{\tilde{f}} \ar@{_{(}->}[d]_{\jmath} & \Lambda' \ar@{^{(}->}[d]^-{\jmath'}\\
\bar{H} \ar[r]^-{\bar{f}}  & \bar{H}'
}
\end{array}
\end{equation}
is commutative. Existence of such $\tilde{f}$ implies  that 
there exists a function $I_{\mathsf{co}} \stackrel{\eta}\lar \{+1, -1\}$ such that 
$(\varphi, \eta)$ defines  equivalence  of tuples $\bigl(\Omega, \prec,  \tau, \sim,  \alpha, \beta, \gamma\bigr)$ and $\bigl(\Omega', \prec',  \tau', \sim',  \alpha', \beta', \gamma'\bigr)$. 

Conversely, any equivalence of tuples 
$$
(\varphi, \eta): \bigl(\Omega, \prec,  \tau, \sim,  \alpha, \beta, \gamma\bigr)\lar \bigl(\Omega', \prec',  \tau', \sim',  \alpha', \beta', \gamma'\bigr)
$$
defines an isomorphism  $H \stackrel{f}\lar H'$, for which there exists an isomorphism 
$\Lambda \stackrel{\tilde{f}}\lar \Lambda'$ making the  diagram (\ref{E:Compatibilty}) commutative. Hence, we get an isomorphism of nodal orders 
$$
A\bigl(\Omega, \prec,  \tau, \sim,  \alpha, \beta, \gamma\bigr)\lar A\bigl(\Omega', \prec',  \tau', \sim',  \alpha', \beta', \gamma'\bigr),
$$
as asserted. 
\end{proof}
Finally, we provide a classification of non-basic real nodal orders. Let $(\Omega, \prec,  \tau, \sim,  \alpha, \beta, \gamma\bigr)$ be a tuple as in Definition \ref{D:ParameterSet} and 
$\overline{\Omega} \stackrel{\mathsf{wt}}\lar \NN$ be any function. Then arbitrary real nodal orders are parametrized by extended tuples $(\Omega, \prec,  \tau, \sim,  \alpha, \beta, \gamma, \mathsf{wt}\bigr)$. Again, we put $A = H \vee \Lambda$, where 
$H$ and $\Lambda$ are defined as follows.

First, we define the function $\Omega \stackrel{\mathsf{wt}_\upsilon}\lar \NN$ by the following rules.
\begin{enumerate}
\item[(a)] If $\omega \in \Omega^{\mathsf{s}}$ then $\mathsf{wt}_\upsilon(\omega) = 
\mathsf{wt}(\omega)$. 
\item[(b)] If $\omega \in \Omega^{\mathsf{g}}$ then $\mathsf{wt}_\upsilon(\omega) = 
 \mathsf{wt}\bigl(\pi_{\mathsf{g}}(\omega)\bigr)$. 
\item[(c)] Let $\omega \in \Omega^{\mathsf{d}}$. Recall that we have a surjective map 
$\overline{\Omega}^{\mathsf{d}} \stackrel{\pi_{\mathsf{d}}}\rightarrowdbl {\Omega}^{\mathsf{d}}$. If $\pi_{\mathsf{d}}^{-1}(\omega) = \{\omega\}$ then $\mathsf{wt}_\upsilon(\omega) = 
2 \mathsf{wt}(\omega)$. If $\pi_{\mathsf{d}}^{-1}(\omega) = \{\omega_+, \omega_-\}$ then $\mathsf{wt}_\upsilon(\omega) = 
\mathsf{wt}(\omega_+) + \mathsf{wt}(\omega_-)$.
\end{enumerate}
We define the hereditary order $H = H(\Omega, \prec, \tau, \sim, \mathsf{wt})$ by the same formula (\ref{E:HeredCover}). However, now for any  $k \in I$ the vector  $\vec{d}_k$ is defined by replacing the function $\upsilon$ via  its weighted version $\mathsf{wt}_\upsilon$. Next, we put   $\Lambda = \prod\limits_{\xi \in \overline{\Omega}} \Lambda_{\xi} =
\prod\limits_{\xi \in \overline{\Omega}} M_{\mathsf{wt}(\xi)}\bigl(\FF_{\chi(\xi)}\bigr)$.
The embedding $\Lambda \stackrel{\jmath}\lar \bar{H}$ is defined analogously to the basic case.  For example, let  $\omega \in \Omega^{\mathsf{d}}$ be such that $\pi_{\mathsf{d}}^{-1}(\omega) = \{\omega_+, \omega_-\}$. Then we have an embedding $\Lambda_{\omega_+} \times \Lambda_{\omega_-} \stackrel{\jmath_\omega}\lar \bar{H}_{\omega} $ defines as follows:
$$
M_{\mathsf{wt}(\omega_+)}\bigl(\FF_{\chi(\omega)}\bigr) \times 
M_{\mathsf{wt}(\omega_-)}\bigl(\FF_{\chi(\omega)}\bigr)
\lar  M_{\mathsf{wt}(\omega)}\bigl(\FF_{\chi(\omega)}\bigr), \quad (x_+, x_-) \mapsto
\left(
\begin{array}{cc}
x_+ & 0 \\
0 & x_-
\end{array}
\right).
$$
\begin{definition} Two functions $\mathsf{wt}, \mathsf{wt}_\ast: \overline{\Omega} \lar \NN$ are called \emph{equivalent} if
$$
\mathsf{wt}(\omega_\pm) =  \mathsf{wt}_\ast(\omega_\pm) \quad \mbox{\rm or} \quad 
\mathsf{wt}(\omega_\pm) =  \mathsf{wt}_\ast(\omega_\mp) 
$$
for any elements  $\omega_+, \omega_- \in \overline{\Omega}^{\mathsf{g}}$ such that 
$\pi_{\mathsf{g}}(\omega_+) = \pi_{\mathsf{g}}(\omega_-)$. 

Next, two extended tuples 
$\bigl(\Omega, \prec,  \tau, \sim,  \alpha, \beta, \gamma, \mathsf{wt}\bigr)$ and $\bigl(\Omega', \prec',  \tau', \sim',  \alpha', \beta', \gamma', \mathsf{wt}'\bigr)$
are \emph{equivalent}  if there exists a function $\overline{\Omega} \stackrel{\mathsf{wt}_\ast}\lar \NN$ equivalent to $\mathsf{wt}$, a bijection $\Omega \stackrel{\varphi}\lar \Omega'$ and a function $I_{\mathsf{co}} \stackrel{\eta}\lar \{+1, -1\}$ satisfying the following properties
\begin{enumerate}
\item[(i)] $(\varphi, \eta): \bigl(\Omega, \prec,  \tau, \sim,  \alpha, \beta, \gamma\bigr)$ and $\bigl(\Omega', \prec',  \tau', \sim',  \alpha', \beta', \gamma', \bigr)$ is an equivalences of non-extended tuples in the sense of Definition \ref{D:ParameterSet}.
\item[(ii)] The diagram
$$
\xymatrix{
\overline{\Omega} \ar[rr]^-{\bar\varphi} \ar[rd]_-{\mathsf{wt}_\ast} & & \overline{\Omega}' \ar[ld]^-{\mathsf{wt}'}\\
& \NN & 
}
$$
is commutative, where $\bar\varphi$ is the bijection induced by $\varphi$. 
\end{enumerate}
\end{definition}

\begin{theorem}\label{T:Final}
The isomorphism classes of basic real nodal orders are parametrized by  the equivalence types of extended tuples $\bigl(\Omega, \prec,  \tau, \sim,  \alpha, \beta, \gamma, \mathsf{wt}\bigr)$. 
\end{theorem}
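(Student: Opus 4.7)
The plan is to reduce Theorem \ref{T:Final} to Theorem \ref{T:NodalClassification} by isolating the additional structure carried by a non-basic nodal order over its basic analogue, namely the multiplicities of the simple components of $\Lambda$. Let $A$ be a real nodal order. By Theorem \ref{T:Cartesian} one has $A \cong H \vee \Lambda$, where $H$ is the hereditary cover of $A$, $\Lambda \cong \bar A$ is a finite-dimensional semi-simple $\RR$-algebra, and $\Lambda \stackrel{\jmath}\longhookrightarrow \bar H$ is a semi-simple nodal embedding. The basic case treated in Theorem \ref{T:NodalClassification} corresponds to $\Lambda$ being a product of division algebras; the present theorem allows $\Lambda \cong \prod_{\xi\in\overline\Omega} M_{n_\xi}(\FF_{\chi(\xi)})$, and we set $\mathsf{wt}(\xi) := n_\xi$.

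First I would run the analysis of $\jmath$ given by Theorem \ref{T:SemiSimpleNodal}, which decomposes any nodal embedding into the five elementary types. For each primitive central idempotent of $\Lambda$ the analysis is completely determined by its type, except that the sizes of the matrices involved are now inflated by the weights. In type (1) and (2) the block of $\bar H$ indexed by $\omega\in\Omega^{\mathsf s}$ must be $M_{\mathsf{wt}(\omega)}(\FF_{\chi(\omega)})$; for a glued pair $\omega'\sim\omega''$ the two diagonally embedded summands force both blocks of $\bar H$ to have the same weight, which is precisely why $\mathsf{wt}$ is defined on the quotient $\overline\Omega^{\mathsf{g}}$ instead of on $\Omega^{\mathsf{g}}$; for a canonical embedding of type (4) the weights on $\omega_+$ and $\omega_-$ are independent and the corresponding block of $\bar H$ has size $\mathsf{wt}(\omega_+)+\mathsf{wt}(\omega_-)$; for a regular embedding of type (5) the block of $\bar H$ has size $2\mathsf{wt}(\omega)$. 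This matches exactly the definition of $\mathsf{wt}_\upsilon$ preceding the theorem, and together with Theorem \ref{T:HeredOrders} and Summary \ref{SummaryRealHereditary} it shows that the extended tuple $(\Omega,\prec,\tau,\sim,\alpha,\beta,\gamma,\mathsf{wt})$ determines $H$ and $\jmath$, and hence $A$, uniquely up to $\RR$-algebra isomorphism.

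For the isomorphism statement I would follow the proof of Theorem \ref{T:NodalClassification} essentially verbatim. By Theorem \ref{T:Cartesian} any isomorphism $A \cong A'$ extends uniquely to an isomorphism $f\colon H \to H'$ of hereditary covers, whose structure is controlled by Corollary \ref{C:SummaryAutomorphisms}. The map $f$ identifies the block decompositions (\ref{E:HeredCover}) on the two sides, producing a bijection $\varphi\colon\Omega\to\Omega'$ and a scalar function $\eta\colon I_{\mathsf{co}}\to\{\pm1\}$ accounting for the ambiguity of a uniformiser in $\CC\llbracket t\rrbracket^{\mathsf{tw}}$; the compatibility of $\bar f$ with the nodal embeddings $\jmath$ and $\jmath'$ then translates into the same compatibility conditions on $(\prec,\tau,\sim,\alpha,\beta,\gamma)$ as in the basic case. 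The new point is that $\bar f$ now sends the block $M_{n_\xi}(\FF_{\chi(\xi)})$ of $\Lambda$ to $M_{n'_{\bar\varphi(\xi)}}(\FF_{\chi(\bar\varphi(\xi))})$, forcing $\mathsf{wt}'\circ\bar\varphi$ to coincide with some function $\mathsf{wt}_\ast$ equivalent to $\mathsf{wt}$.

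The main obstacle, and the reason the equivalence on weights has to allow swapping $\omega_+$ and $\omega_-$ within a glued class, is the following. For a glued pair $\{\omega_+,\omega_-\}\in\overline\Omega^{\mathsf{g}}$ lying in two distinct chains $\Omega_i$ and $\Omega_j$, the automorphism group of $H$ in general does not allow an arbitrary permutation of the two preimages; however, when $\omega_+$ and $\omega_-$ lie in the \emph{same} chain of type $\tau\in\{\mathsf{re},\mathsf{cx},\mathsf{tc},\mathsf{qt}\}$, the twisted cyclic shift $\Ad_\varrho$ of (\ref{E:Rotation}) implements a swap of their weights on the level of $\bar H$ by formula (\ref{E:type3}). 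One must check carefully that this swap is compatible with $\jmath$ (it is, because the diagonal and twisted diagonal embeddings are symmetric in the two copies of $\FF_{\chi}$), and this is precisely the content of the equivalence relation $\mathsf{wt}\sim\mathsf{wt}_\ast$. Conversely, given an equivalence of extended tuples, the Cartesian construction of Theorem \ref{T:Cartesian} together with the automorphisms listed in Corollary \ref{C:SummaryAutomorphisms} assembles an isomorphism $A\cong A'$, closing the circle.
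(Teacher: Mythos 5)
Your overall strategy is the one the paper intends: reduce to Theorem~\ref{T:NodalClassification} by recognising that the non-basic data is exactly a weight function, and run the existence and uniqueness halves via Theorem~\ref{T:Cartesian}, Theorem~\ref{T:SemiSimpleNodal} and Corollary~\ref{C:SummaryAutomorphisms}. The paper itself gives no argument at all (it is declared ``completely analogous'' to Theorem~\ref{T:NodalClassification} and left to the reader), so you are doing more work than the text. The first half of your write-up --- matching the inflated block sizes of $\bar H$ to $\mathsf{wt}_\upsilon$ and deducing that the extended tuple determines $H\vee\Lambda$ up to isomorphism --- is sound.

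However, your last paragraph, which you flag as ``the main obstacle,'' misidentifies where the extra weight-equivalence comes from. The paper's definition of equivalence of $\mathsf{wt}$ is worded in terms of $\overline{\Omega}^{\mathsf{g}}$ and $\pi_{\mathsf{g}}$, but this is a typo: $\pi_{\mathsf{g}}$ maps \emph{into} $\overline{\Omega}^{\mathsf{g}}$ rather than out of it, and the notation $\omega_+,\omega_-$ is introduced only for $\overline{\Omega}^{\mathsf{d}}$, so the intended meaning is $\omega_+,\omega_-\in\overline{\Omega}^{\mathsf{d}}$ with $\pi_{\mathsf{d}}(\omega_+)=\pi_{\mathsf{d}}(\omega_-)$, i.e.\ the two idempotents replacing a doubled element $\omega\in\Omega^{\mathsf{d}}$ with $\beta(\omega)=\mathsf{can}$. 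You instead read it literally as a statement about glued pairs; but for a glued class $\xi\in\overline{\Omega}^{\mathsf{g}}$ there is a single value $\mathsf{wt}(\xi)$, and by the definition of $\mathsf{wt}_\upsilon$ both preimages $\omega',\omega''\in\Omega^{\mathsf{g}}$ automatically receive that same weight --- so there is no swap to allow. Consequently your explanation in terms of the twisted cyclic shift $\Ad_\varrho$ and formula (\ref{E:type3}) is not the right mechanism; the actual mechanism for the $\overline{\Omega}^{\mathsf{d}}$-swap is the inner automorphism of the block $\bar{H}_\omega\cong M_{\mathsf{wt}(\omega_+)+\mathsf{wt}(\omega_-)}(\FF_{\chi(\omega)})$ given by conjugation with the block permutation matrix $\bigl(\begin{smallmatrix}0&\mathbbm{1}\\ \mathbbm{1}&0\end{smallmatrix}\bigr)$, which interchanges the two diagonal blocks $M_{\mathsf{wt}(\omega_+)}\times M_{\mathsf{wt}(\omega_-)}$. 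This is an inner automorphism of $\bar H$, and by the remark following Theorem~\ref{T:StandardOrder} (part (c)) every inner automorphism of $\bar H$ lifts to an inner automorphism of $H$, whence it preserves the hereditary cover and gives the desired isomorphism of nodal orders. So the mechanism you should invoke is item (a) of Corollary~\ref{C:SummaryAutomorphisms}, not item (c), and the locus is $\overline{\Omega}^{\mathsf{d}}$, not $\overline{\Omega}^{\mathsf{g}}$. Once that is corrected, the rest of your argument (carrying out the $(\varphi,\eta)$ bookkeeping as in Theorem~\ref{T:NodalClassification}) goes through.
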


\begin{proof} Any real nodal order is isomorphic to $A\bigl(\Omega, \prec,  \tau, \sim,  \alpha, \beta, \gamma, \mathsf{wt}\bigr)$ for an appropriate extended tuple. Moreover, 
$A\bigl(\Omega, \prec,  \tau, \sim,  \alpha, \beta, 
\gamma, \mathsf{wt}\bigr)
\cong A\bigl(\Omega', \prec',  \tau', \sim',  \alpha', \beta', 
\gamma', \mathsf{wt}'\bigr)
$
if and only if the corresponding extended tuples are equivalent. Proofs of these statements are completely analogous to the ones of Theorem \ref{T:NodalClassification} and are therefore left to an interested reader as an exercise. 
\end{proof}

\end{document}